\newtheorem{thm}{Theorem}
\newtheorem{cor}[thm]{Corollary}
\newtheorem{lem}[thm]{Lemma}
\newtheorem{prop}[thm]{Proposition}
\theoremstyle{definition}
\newtheorem{defn}[thm]{Definition}
\theoremstyle{remark}
\newtheorem{rem}[thm]{Remark}
\newcommand{\R} {\mathbb{R}}
\newcommand{\N} {\mathbb{N}}
\newcommand{\E} {\mathbb{E}}
\newcommand{\beq}{ \begin{equation} }
\newcommand{\eeq}{ \end{equation} }
\renewcommand{\P}{\mathbb{P}}
\numberwithin{equation}{section} 
\numberwithin{thm}{section}
\begin{document}
\include{amsthm_sc}

\title{Heavy-Tailed Mixed p-Spin Spherical Model: Breakdown of Ultrametricity and Failure of the Parisi Formula}

 \author{Taegyun Kim\footnote{Department of Mathematical Sciences, KAIST, Daejeon, 305701, Korea \newline email: \texttt{ktg11k@kaist.ac.kr}}}



\maketitle

\begin{abstract}
We prove that the two cornerstones of mean-field spin glass theory---the Parisi variational formula and the ultrametric organization of pure states---break down under heavy-tailed disorder. For the mixed spherical \(p\)-spin model whose couplings have tail exponent \(\alpha<2\), we attach to each \(p\) an explicit threshold \(H_p^{*}\). If any coupling exceeds its threshold, a single dominant monomial governs both the limiting free energy and the entire Gibbs measure; the resulting energy landscape is intrinsically probabilistic, with a sharp failure of ultrametricity for \(p\ge4\) and persistence of only a degenerate 1-RSB structure for \(p\le3\). When all couplings remain below their thresholds, the free energy is \(O(n^{-1})\) and the overlap is near zero, resulting in a trivial Gibbs geometry. For \(\alpha<1\) we further obtain exact fluctuations of order \(n^{1-p}\). Our proof introduces \emph{Non-Intersecting Monomial Reduction} (NIMR), an algebraic--combinatorial technique that blends convexity analysis, extremal combinatorics and concentration on the sphere, providing the first rigorous description of both regimes for heavy-tailed spin glasses with \(p\ge3\).
\end{abstract}

\tableofcontents

\section{Introduction}
Spin glass theory emerged in the 1970s to explain the puzzling magnetic
behavior of dilute alloys such as CuMn and AuFe
\cite{Cannella1972}.  A family of mean field models soon became the
standard testing ground: the Edwards--Anderson model
\cite{Edwards1975Theory}, the Sherrington--Kirkpatrick (SK) model
\cite{Sherrington1975Solvable}, its spherical analogue
\cite{Kosterlitz1976Spherical}, Derrida’s Random Energy Model (REM)
\cite{Derrida1980REM}, and heavy-tailed L\'evy variants
\cite{cizeau1994theory,janzen2008replica,janzen2010thermodynamics}.

\medskip
\noindent\textbf{The Parisi picture and its limits.}
Parisi’s replica-symmetry-breaking (RSB) variational formula
\cite{parisi1979infinite} and the ultrametricity \cite{mezard1984nature}, later proved with mathematical rigor by Talagrand
\cite{Talagrand2006Parisi} and Panchenko
\cite{Panchenko2013Ultrametricity} forged the modern picture of
mean-field spin glass theory. Due to their success, most of the rigorous and heuristic work has 
relied on this tree-like ultrametric organization of the Gibbs measure. 

\medskip
\noindent\textbf{Heavy tails: a fundamental difference.}
This paper shows that this common belief for mean-field spin glass theory: Parisi variational formula and ultrametricity break down when the
coupling distribution has a tail exponent $0<\alpha<2$. In the mixed
$p$-spin spherical model, we prove:
\begin{itemize}
    \item Ultrametricity remains below the threshold but \emph{fails} in the dominant regime, giving the first counter-example of ultrametricity and the sharp threshold to break the ultrametricity. 
    \item If there exists an interaction above the threshold $H_p^*$, then only one \emph{dominant} monomial governs the free energy and the energy landscape. If such a monomial comes from $p\ge4$, it breaks ultrametricity. Otherwise, it maintains the ultrametricity as a degenerate 1-RSB form.
  \item A new normalization of order
  $n^{-(p-2)/2}b_{n,p}^{-1}$—with
  \(
    b_{n,p}=\inf\{t:\P(|H|>t)<\binom{n}{p}^{-1}\}
  \)
  (see Theorem~\ref{GSE})—is necessary and sufficient to obtain an
  $O(n)$ ground state energy.
  \item Contrast to usual Gaussian spin glass theory, it does not exhibit Parisi type variational formula and the free energy and its energy landscape converge in distribution to random variables.
\end{itemize}

\medskip
\noindent\textbf{Context and related works.}
Heavy-tailed random matrices already exhibit Poissonian edge statistics
and eigenvector localization-delocalization phase transition
\cite{auffinger2009poisson,soshnikov2004poisson,arous2008spectrum,
aggarwal2021goe,aggarwal2021eigenvector,aggarwal2022mobility}.  In the
spin-glass setting, rigorous results were previously confined to
two-body interactions using random matrix techniques
\cite{kim2024fluctuations} or heavy-tailed extensions of the SK
formula \cite{chen2025some,jagannath2024existence}.  Higher-order
interactions remained out of reach because existing tools rely
crucially on Gaussian tails (e.g., Stein's lemma, Wick's calculus).  Our
method—\emph{Non-Intersecting Monomial Reduction (NIMR)}—bypasses those
obstacles and applies to \emph{every} mixed $p$-spin Hamiltonian.

Parallel efforts to map the energy landscape of \emph{Gaussian}
spherical models through complexity counts, band decomposition, and
TAP equations
\cite{auffinger2013random,subag2017complexity,subag2017geometry,
baik2016fluctuations,arous2024shattering} using random matrix theory do not transfer directly to
heavy-tailed disorder. The present work supplies both the correct
scaling and new probabilistic tools for that extension.
\subsection{Main result}
Before stating our main result, we need to define our heavy tail model. We use different normalization and this will be justified in Theorem \ref{GSE} by showing scale of Ground State Energy.
\begin{defn}[Hamiltonian]
    We consider the indices for the case of $i_1< i_2<\cdots< i_p$. Each $H'_{i_1,...,i_p}$ is an i.i.d. copy of the heavy-tailed random variable $H$ with tail exponent $\alpha<2$. We define 
    the Hamiltonian of our mixed $p$ spin spherical model with heavy tail interaction  as 
    \[
    H_{n}(\sigma)= \sum_{p\geq 2} \alpha(p)H_{n,p}(\sigma)
\]
where 
    \[b_{n,p}=\inf\{t:\P(|H|>t)<\binom{n}{p}^{-1}\} \text{ and }H_{n,p}=\sum_{i_1<\cdots<i_p } H'_{i_1,...,i_p}n^{-(p-2)/2}b_{n,p}^{-1}\sigma_{i_1}...\sigma_{i_p}.\]

We define random variables after the suitable normalization as $H_{i_1,...,i_p}=H_{i_1,...,i_p}'b_{n,p}^{-1}$. 
We also use the notation $H_{I,p}=H_{i_1,...,i_p}$ for $I=\{i_1,...,i_p\}$ and for each $p$, we define its absolute value ordered statistics as $|H_{1,p}|\geq|H_{2,p}|\geq...\geq|H_{\binom{n}{p},p}|$. See Section 2 for more detail to basic notions.  
\end{defn}
We also define basic notions about its spin configurations. 
\begin{defn} The spin variable $(\sigma_1,...,\sigma_n)$ is uniformly distributed on the sphere \[S_n=\{(\sigma_1,...,\sigma_n)|\sum_{i=1}^n \sigma_i^2=n\}.\]
Now we write $\E$ as the expectation over the uniform measure on this sphere.
 We define the partition function and the Gibbs measure $Z_n$ and $G_n$, respectively: \[
    Z_n=\E[\exp(\beta H_n(\sigma)) ], \quad G_n(\sigma)= \exp (\beta H_n(\sigma) )/Z_n
    \]
    where $\beta$ is the inverse temperature.
    We also define free energy $F_n$ and ground state energy (\emph{GSE}) as
    \[
    F_n=\frac{1}{n}\log Z_n,\; \text{GSE}= \max_{\sigma}H_n(\sigma).
    \]
    The overlap is the inner product \(
        R_{1,2}=\frac{1}{n}\sum_{i=1}^n\sigma^1_i\sigma^2_i
    \) where $\sigma^1= (\sigma^1_1,...,\sigma^1_n)$, $\sigma^2=(\sigma^2_1,...,\sigma^2_n)$ are independently sampled from the Gibbs measure.
    We also define the expectation over the Gibbs measure as$\langle \ \rangle $. This definition also covers the expectation of the measure $G_n^{\otimes p}$.
\end{defn}
We also define the dominant interaction to express the above threshold regime. The intuition of a dominant interaction is to find such a pair:
\[
\E[\exp(\beta\alpha(p)H_{I,p}\sigma_1...\sigma_pn^{-(p-2)/2})]=\max_{J,q} \E[\exp(\beta\alpha(p)H_{J,q}\sigma_1...\sigma_qn^{-(q-2)/2})].
\]
Before we elaborate on our main theorems, we need to define their conditional events. We express our main theorems using these conditional events.
\begin{defn}
    We define \[
    \mathcal{F}_1=\{ |\beta \alpha(p) H_{1,p}|<H_p^*\text{ for all } p\}
    \]
    and
    \[\mathcal{F}_p= \{|\beta \alpha(p) H_{1,p}|>H_p^*,f_q(\beta \alpha(q)H_{1,q})<f_p(\beta \alpha(p)H_{1,p})\text{ for all } q\}.
    \]
    We also define the conditional event in $\mathcal{F}_p$ as
    \[
        \mathcal{F}(H,p)=\{H_{1,p}=H,|H|>H_p^*\text{ and }f_q(\beta \alpha(q) H_{1,q})<f_p(\beta \alpha(p) H)\text{ for all }q\}.
    \]
\end{defn}
See Definition \ref{def:threshold} for the definition of $f_p,H_p^*$. Now we can explain our main theorems.
\begin{thm}[Sampled free energy]\label{Free}
    For a mixed p-spin spherical model with heavy-tail interaction with exponent $\alpha<2$, its free energy shows the following phase transitions with high probability.
    \begin{itemize}
        \item For the event $\mathcal{F}(H,p)$ which is the dominant interaction $H_{1,p}$ has the fixed value $H$ and others are smaller than that, we have
        \[
            \frac{1}{n}\log Z_n|\mathcal{F}(H,p) = f_p(\beta\alpha(p)H)+O(n^{-\epsilon}).
        \]
        \item If $|\beta\alpha(p) H_{I,p}|<H_p^*$ holds for every index, for this event $\mathcal{F}_1$,
        \[
        \frac{1}{n}\log Z_n |\mathcal{F}_1=O(n^{-\epsilon}).
        \]
        \item Furthermore for smallest p such that $\alpha(p)\neq 0$,
         if $p=2$,
        \[
            \log Z_n|\mathcal{F}_1= -\sum_{|I|=2}\frac{1}{2}\log (1- \beta^2\alpha(2)^2{H_{I,2}^2})+O(n^{-\epsilon}).
        \]
        If $p\geq 3$,
        \[
            \log Z_n |\mathcal{F}_1= O(n^{-\epsilon}).
        \]
        Especially, if $\alpha <1$, this exhibits 
        \[
            n^{p-2}\log Z_n|\mathcal{F}_1= \frac{1}{2}\beta^2\alpha(p)^2\sum_{|I|=p} H_{I,p}^2+O(n^{-\epsilon}).
        \]
       
    \end{itemize}
\end{thm}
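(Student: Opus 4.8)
The plan is to treat the three regimes separately, on two common pillars. The first is a package of a priori heavy-tail estimates, valid with probability $1-o(1)$: $|H_{1,p}|=O(1)$ for each $p$ with $\alpha(p)\neq 0$; only finitely many monomials (over all $p,j$) have $|\beta\alpha(p)H_{j,p}|>H_p^{*}$, by the first-moment count $\#\{j:|H_{j,p}|>c\}\approx\binom np\,\P(|H'|>c\,b_{n,p})=O(1)$; no monomial with $|\beta\alpha(q)H_{J,q}|$ below $H_q^{*}$ has index set contained in the distinguished set of a monomial above threshold; and the partial sums $\sum_{j\ge 2}|H_{j,p}|^{s}$, $s\in\{1,2\}$, obey the relevant bound --- dominated by their largest term when $\alpha<1$, and growing only polynomially, far below $n$, when $1\le\alpha<2$. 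The second pillar is a Laplace analysis on $S_n$: the uniform measure is invariant under each sign flip $\sigma_i\mapsto-\sigma_i$, so any mixed coordinate moment vanishes unless every index appears an even number of times (hence $\E[\sigma_{i_1}\cdots\sigma_{i_p}]=0$ and $\E_{S_n}[H_n]=0$); and for fixed $I$, $|I|=p$, the mass $u=\tfrac1n\sum_{i\in I}\sigma_i^2$ is $\mathrm{Beta}(p/2,(n-p)/2)$-distributed with $\tfrac1n\log\P(u\approx t)=\tfrac12\log(1-t)+o(1)$, while $\prod_{i\in I}\sigma_i$ concentrates near $\pm(nu/p)^{p/2}$ given $u$. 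These are combined through Non-Intersecting Monomial Reduction: writing $H_n=M+R$ with $M=\alpha(p)H_{1,p}n^{-(p-2)/2}\sigma_{i_1}\cdots\sigma_{i_p}$ the dominant monomial, and conditioning on the split of $\sigma$ into its distinguished coordinates and the complementary block, one shows $\E[e^{\beta H_n}\mathbf{1}_{\Omega_0}]\le e^{O(n^{1-\epsilon})}\,\E[e^{\beta M}\mathbf{1}_{\Omega_0}]$ on the slab $\Omega_0$ near the optimizer of $M$, by noting that monomials supported in the complement form a scaled heavy-tail Hamiltonian on $n-p$ coordinates controlled by the subthreshold bound, monomials meeting the distinguished set in $1\le k<q$ of their $q$ coordinates have coefficients $O(n^{(k-q+2)/2}|H_J|)$ and sum to $O(n^{1-\epsilon})$, and monomials inside the distinguished set contribute a constant of size $O(n^{1-\epsilon})$.

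For the dominant regime on $\mathcal F(H,p)$, the upper bound splits $S_n$ into $\Omega_0$ and its complement. On $\Omega_0$, NIMR plus the one-monomial integral --- conditioning on $u$ and using the Beta law and product concentration --- give $\tfrac1n\log\E[e^{\beta M}\mathbf{1}_{\Omega_0}]=\sup_{t\in[0,1]}\big[\,|\beta\alpha(p)H|(t/p)^{p/2}+\tfrac12\log(1-t)\,\big]+O(n^{-\epsilon})=f_p(\beta\alpha(p)H)+O(n^{-\epsilon})$ (Definition~\ref{def:threshold}), the supremum leaving $t=0$ precisely at $H_p^{*}$. On the complement one decomposes according to which of the finitely many above-threshold monomials is responsible, bounds each piece by $e^{n f_q(\beta\alpha(q)H_{1,q})+O(n^{1-\epsilon})}\le e^{n f_p(\beta\alpha(p)H)+O(n^{1-\epsilon})}$ using $f_q(\beta\alpha(q)H_{1,q})<f_p(\beta\alpha(p)H)$, and sums the $O(1)$ pieces. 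The matching lower bound restricts $\E$ to configurations with $u$ within $n^{-\delta}$ of the maximizer $u^{*}$, signs aligned so $\sigma_{i_1}\cdots\sigma_{i_p}$ takes the sign of $\beta\alpha(p)H$ and magnitude within $1-n^{-\delta}$ of extremal, and $R\ge -O(n^{1-\epsilon})$ there; this set has measure at least $e^{n(\frac12\log(1-u^{*})+o(1))}$, so $\tfrac1n\log Z_n\ge f_p(\beta\alpha(p)H)-o(1)$.

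For the subthreshold regime on $\mathcal F_1$, every coupling is below threshold, so $f_p\equiv 0$ and the NIMR bound gives $\sup_\sigma H_n(\sigma)=O(n^{1-\epsilon})$, hence $\tfrac1n\log Z_n\le O(n^{-\epsilon})$, while $\E_{S_n}[H_n]=0$ with Jensen gives $\tfrac1n\log Z_n\ge 0$. For the refinements, let $p_0$ be the smallest index with $\alpha(p_0)\neq 0$ and expand $\log Z_n=\sum_{m\ge 2}\tfrac{\beta^m}{m!}\kappa_m$ in the cumulants of $H_n$ under the uniform measure. Sign-flip symmetry makes the covariance diagonal: $\kappa_2=\sum_p\alpha(p)^2 n^{-(p-2)}\sum_{|I|=p}H_{I,p}^2\,\E[\sigma_I^2]$ with $\E[\sigma_I^2]=1+O(1/n)$, so $p=p_0$ dominates. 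If $p_0=2$ this is $O(1)$: a Gaussian-comparison/Laplace argument gives $\log Z_n=-\tfrac12\log\det(I-\beta\alpha(2)A)+O(n^{-\epsilon})$ for the coupling matrix $A$, and since the large couplings generically occupy disjoint coordinate pairs (using a slowly vanishing threshold) $A$ decouples into $2\times 2$ blocks with $\det(I-\beta\alpha(2)\cdot\mathrm{block})=1-\beta^2\alpha(2)^2H_{I,2}^2$, the rest contributing $O(n^{-\epsilon})$; this yields $\log Z_n=-\tfrac12\sum_{|I|=2}\log(1-\beta^2\alpha(2)^2H_{I,2}^2)+O(n^{-\epsilon})$. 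If $p_0\ge 3$ then $\kappa_2=O(n^{-(p_0-2)})=O(n^{-\epsilon})$, the bare statement; and for $\alpha<1$ the heavy tail makes every $\sum_I|H_{I,p_0}|^{m}$ with $m\ge 1$ and every joint cumulant $\kappa_m$ with $m\ge 3$ a power of $n$ smaller than $n^{-(p_0-2)}$, so $n^{p_0-2}\log Z_n=\tfrac{\beta^2}{2}\alpha(p_0)^2\sum_{|I|=p_0}H_{I,p_0}^2+O(n^{-\epsilon})$.

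The main obstacle is the NIMR step: the uniform-in-$\sigma$ bound on $R$, in particular handling monomials that partially overlap the distinguished coordinate set and reconciling the $\alpha<1$ and $1\le\alpha<2$ regimes for the coupling sums; and, for the refined $p_0\ge 3$ identity, verifying that no off-diagonal or higher-cumulant term reaches the scale $n^{-(p_0-2)}$.
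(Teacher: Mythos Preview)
Your approach is geometrically natural---Laplace localisation on slabs of $S_n$ plus cumulant expansion---and differs substantially from the paper's, which is algebraic: H\"older-splitting $H_n=H_n^A+\cdots+H_n^E$ into a non-intersecting top-$n^\epsilon$ block and four remainder blocks, bounding each remainder at the level of $\E[\exp(sH_n^{\bullet})]\le 1+O(n^{-\epsilon'})$ (via random-matrix eigenvalue bounds, a colouring argument, and sphere concentration respectively), and then analysing the NIM block through Taylor expansion in moment-index space with a convexity lemma for $z\mapsto\log\E[|\widehat H|^z]/z!$.

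There is a genuine gap in your NIMR step. On the event $\mathcal F(H,p)$ there may be \emph{several} above-threshold monomials: the definition only requires $f_q(\beta\alpha(q)H_{1,q})<f_p(\beta\alpha(p)H)$, not that $H_{1,q}$ or $H_{2,p}$ lie below their thresholds. With high probability these extra above-threshold monomials have index sets disjoint from the distinguished set $I$, so on your slab $\Omega_0$ they sit entirely in the complement block. The complement block is then \emph{not} a subthreshold heavy-tail Hamiltonian, and its conditional partition function is $e^{n f_q(\cdot)+o(n)}$ rather than $e^{O(n^{1-\epsilon})}$. Your claimed bound $\E[e^{\beta H_n}\mathbf 1_{\Omega_0}]\le e^{O(n^{1-\epsilon})}\E[e^{\beta M}\mathbf 1_{\Omega_0}]$ therefore fails, and the upper bound on $\Omega_0$ becomes $e^{n(f_p+f_q)+o(n)}$, which is too large. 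The obstacle you flag---partially overlapping monomials---is real but secondary; the complement-supported above-threshold monomials are the primary leak.

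The paper closes this leak not by spatial localisation but by showing, for the NIM block, that the Taylor sum $\sum_e f(e)$ over multi-indices $e=(a_1,\dots)$ is concentrated on ``corner'' regions $A_i$ where only one $a_i$ is of order $n$ and all others are $O(n^\epsilon)$; this is Proposition~\ref{conc_nim}, proved by convexity of $\log f$ in the multi-index together with step-ratio bounds. The upshot is that the NIM partition function is $\max_i\E[\exp(\widehat H_i)]$ up to polynomial factors, i.e.\ the above-threshold monomials \emph{compete} for sphere mass rather than multiply. Your slab decomposition could in principle recover this by jointly optimising the mass allocation $(u_1,\dots,u_k)$ among all above-threshold monomials and showing the optimiser is a vertex of the simplex, but that variational statement is exactly what needs proof and is not supplied.
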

The author and Lee have already established the case for \( p=2 \) in \cite{kim2024fluctuations} that its fluctuation scale is $O(1)$ in the high-temperature regime (where the largest entry exceeds a certain threshold) and $O(n)$ in the low-temperature regime (where the largest entry remains below the threshold). This was achieved by utilizing the eigenvalue properties of the heavy-tail random matrices and is compatible with this result.
\begin{thm}[Sample Gibbs measure structure]\label{Gibbs measure}
    In Hamiltonian, if there is an interaction above threshold and the dominant term is of the form $\beta \alpha(p) H\sigma_1...\sigma_pn^{-(p-2)/2}$, then we have such limiting law as $n\to \infty$ this converges in distribution as
    \[
        \frac{1}{\sqrt{n}}(\sigma_1,...,\sigma_p)|\mathcal{F}(H,p)\to [
\sqrt{t}(\xi_1,...,\xi_p)|H\xi_1...\xi_p>0]
    \]
    where $\xi_i$ is a random variable $\pm 1$ with probability 1/2 and $t=\frac{2\lambda_p(2\beta \alpha(p) H)}{2p\lambda_p((2\beta \alpha(p) H))+1}$.
    Morevoer, for two replicas, its restricted overlap satisfies
    \[
        \langle(\frac{1}{n}\sum_{i=p+1}^n\sigma_{i}^1\sigma_{i}^2)^2|\mathcal{F}(H,p)\rangle=O(n^{-\epsilon})
    \]
    where $\langle \quad \rangle$ means expectation over Gibbs measure.
    Moreover, if every interaction is under the threshold, 
    then we have 
    \[
        \langle R_{1,2}^2|\mathcal{F}_1\rangle=O(n^{-\epsilon}).
    \]
For the above threshold case, the Gibbs measure is concentrated on $2^{p-1}$ connected components in $\{(\sigma_1,...,\sigma_n)|H\sigma_1...\sigma_p>0\}$. In other words,
for $x\in M_+(H,p)$, we have \[
    \langle\mathbbm{1}_{f_{\pm}(x)}|\mathcal{F}(H,p)\rangle=\frac{1}{2^{p-1}}(1-O(n^{-\epsilon}))\] and \[\sum_{x\in M_+(H,p)}\langle\mathbbm{1}_{f_{\pm}(x)}|\mathcal{F}(H,p)\rangle=1-O(n^{-\epsilon}). 
\]
Moreover, the restricted tuple $\frac{1}{\sqrt{n}}(\sigma_1,...,\sigma_p)$ is concentrated near a point in each connected component. More precisely, in each region $x\in M_+(H,p)$, we have its conditional Gibbs measure concentration:
\[
    \langle |(\sigma_1,...,\sigma_ p )-\sqrt{t}x|^2|\sigma \in f_{\pm}(x),\mathcal{F}(H,p)\rangle= O(n^{-\epsilon}).
\]
\end{thm}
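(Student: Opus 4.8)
\emph{Strategy.} The plan is to run the \emph{Non-Intersecting Monomial Reduction}: conditionally on $\mathcal F(H,p)$ we reduce to the pure single-monomial Hamiltonian $\wt H_n(\sigma)=\beta\alpha(p)H\,n^{-(p-2)/2}\sigma_1\cdots\sigma_p$ and then analyze that model exactly. Write $H_n=\wt H_n+R_n$ and classify the monomials of $R_n$ by how their index set $I$ meets $\{1,\dots,p\}$. \emph{(a)} If $I\subseteq\{1,\dots,p\}$, there are only finitely many such terms, each carrying a coupling $H_{I,q}=H'_{I,q}/b_{n,q}$ that $\to0$ in probability (finitely many i.i.d.\ copies divided by $b_{n,q}\to\infty$); since $|\sigma_i|\le\sqrt n$ on $S_n$, each is at most $|H_{I,q}|\,n=O_{P}(n^{1-q/\alpha+o(1)})=o(1)$, as $\alpha<2$ forces $1-q/\alpha<0$. \emph{(b)} If $I$ meets but is not contained in $\{1,\dots,p\}$, it uses at most $p-1$ of the coordinates $\{1,\dots,p\}$; the ground-state bound of Theorem~\ref{GSE} together with the $b_{n,q}$ normalization then makes the whole family (b) of size $O(n^{1-1/\alpha})=O(n^{1-\epsilon})$ uniformly on $S_n$. \emph{(c)} If $I\cap\{1,\dots,p\}=\varnothing$, the term does not depend on $(\sigma_1,\dots,\sigma_p)$, so it contributes an additive constant cancelling in every conditional Gibbs expectation of a function of the first $p$ spins; moreover the conditions defining $\mathcal F(H,p)$ (with the definition of $H_q^{*}$) force the family-(c) sub-Hamiltonian on $(\sigma_{p+1},\dots,\sigma_n)$ to be sub-threshold, so by the $\mathcal F_1$-part of Theorem~\ref{Free} it tilts the uniform measure on the tail block by only $e^{o(n^{1-\epsilon})}$. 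Combined with Theorem~\ref{Free} this yields $Z_n=\wt Z_n\,e^{o(n^{1-\epsilon})}$ and, for every event $A$ measurable with respect to $(\sigma_1,\dots,\sigma_p)$, $\langle\mathbbm{1}_A\mid\mathcal F(H,p)\rangle=\langle\mathbbm{1}_A\rangle_0(1+o(1))$, with $\langle\cdot\rangle_0$ the Gibbs expectation of $\wt H_n$.

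\emph{The single-monomial model.} Write $\sigma=(x,y)$, $x=(\sigma_1,\dots,\sigma_p)$; the uniform density of $w:=x/\sqrt n$ is $\propto(1-|w|^2)^{(n-p-2)/2}\mathbbm{1}_{\{|w|^2<1\}}$, so the $\langle\cdot\rangle_0$-density of $w$ is $\propto\exp\!\big(n\phi(w)+O(\log n)\big)$ with $\phi(w)=\beta\alpha(p)H\,w_1\cdots w_p+\tfrac12\log(1-|w|^2)$. A Laplace analysis (AM--GM and a Lagrange multiplier) shows $\phi$ is maximized exactly on the $2^{p-1}$ points $\sqrt t\,(\xi_1,\dots,\xi_p)$ with $\xi_i\in\{\pm1\}$, $H\xi_1\cdots\xi_p>0$, where $t\in(0,1/p)$ solves the first-order condition, i.e.\ the equation $t=\tfrac{2\lambda_p(2\beta\alpha(p)H)}{2p\lambda_p(2\beta\alpha(p)H)+1}$ of Definition~\ref{def:threshold}, and $H_p^{*}$ is exactly the value of $|\beta\alpha(p)H|$ at which this maximizer detaches from the origin. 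The Hessian of $\phi$ at each maximizer is negative definite (eigenvalue $-2(1-pt)<0$ on $\mathbf 1^{\perp}$, and $\mathbf 1$-eigenvalue $(p-2)-p^{2}t<0$ for $|\beta\alpha(p)H|>H_p^{*}$, which is built into Definition~\ref{def:threshold}), and the maximizers are permuted transitively by the even sign flips of $\{1,\dots,p\}$ preserving $\wt H_n$; hence $\langle\cdot\rangle_0$ puts mass $2^{-(p-1)}(1+o(1))$ near each, $\tfrac1{\sqrt n}(\sigma_1,\dots,\sigma_p)\to\sqrt t\,(\xi_1,\dots,\xi_p)$ in distribution conditioned on $H\xi_1\cdots\xi_p>0$, and within each well $x$ has $O(1)$ Gaussian fluctuations around the center. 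Finally, conditionally on $|x|^2$ the tail $y$ is \emph{exactly} uniform on the radius-$\sqrt{n-|x|^2}$ sphere in $\R^{n-p}$, so two independent replicas have $\tfrac1n\sum_{i>p}\sigma^1_i\sigma^2_i$ centered at $0$ with $O(n^{-1/2})$ fluctuations.

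\emph{Assembling.} Pushing the single-monomial computations through the Stage-1 transfer yields the in-distribution limit of $\tfrac1{\sqrt n}(\sigma_1,\dots,\sigma_p)$; the well masses $\langle\mathbbm{1}_{f_\pm(x)}\mid\mathcal F(H,p)\rangle=2^{-(p-1)}(1-O(n^{-\epsilon}))$ for $x\in M_+(H,p)$ and $\sum_{x\in M_+(H,p)}\langle\mathbbm{1}_{f_\pm(x)}\mid\mathcal F(H,p)\rangle=1-O(n^{-\epsilon})$; the within-well concentration $\langle|(\sigma_1,\dots,\sigma_p)-\sqrt t\,x|^2\mid\sigma\in f_\pm(x),\mathcal F(H,p)\rangle=O(n^{-\epsilon})$ (in the rescaled coordinates of the statement this is the $O(n^{-1})$ Hessian-fluctuation, with $R_n$ moving the barycenter only by $o(n^{-\epsilon})$); and, using the family-(c) analysis with the $\mathcal F_1$-part of Theorem~\ref{Free} applied to the tail block, $\langle(\tfrac1n\sum_{i>p}\sigma^1_i\sigma^2_i)^2\mid\mathcal F(H,p)\rangle=O(n^{-\epsilon})$. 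For the trivial regime, the $\mathcal F_1$-parts of Theorems~\ref{Free} and~\ref{GSE} make $G_n$ an $e^{o(n^{1-\epsilon})}$-tilt of the uniform measure, so $\langle R_{1,2}^2\mid\mathcal F_1\rangle\le n^{-\epsilon}+\P_{G_n^{\otimes2}}(|R_{1,2}|>n^{-\epsilon/2})$ and the last probability is $\le e^{o(n^{1-\epsilon})}e^{-cn^{1-\epsilon}}\to0$ by sub-Gaussian concentration of the overlap on the sphere.

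\emph{Main obstacle.} The hard part is not the exponential-order reduction --- at the level of free energy that is Theorem~\ref{Free} --- but the \emph{sub-exponential} precision needed for equidistribution: to get the \emph{multiplicative} error $1-O(n^{-\epsilon})$ across the $2^{p-1}$ wells, rather than merely $e^{o(n)}$, one must show the symmetry-breaking part of $R_n$ (families (a) and (b)), integrated over the $O(1)$ Gaussian window around each well center, differs between wells by only $o(1)$. This comes down to $|H_{I,q}|\,n=o(1)$ for family (a) and a stable-limit estimate of order $n^{-k/\alpha+(k-q+2)/2}=o(1)$ (valid since $k\ge1$, $q\ge k+1$, $\alpha<2$) for family (b) --- precisely where the $b_{n,q}$ normalization and the hypothesis $\alpha<2$ are indispensable.
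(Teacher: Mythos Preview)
Your reduction to the single monomial $\wt H_n$ is too aggressive, and the transfer step breaks in both regimes.

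\textbf{Above threshold.} The claim that ``the conditions defining $\mathcal F(H,p)$ \dots\ force the family-(c) sub-Hamiltonian \dots\ to be sub-threshold'' is false: $\mathcal F(H,p)$ only requires $f_q(\beta\alpha(q)H_{1,q})<f_p(\beta\alpha(p)H)$, and since $f_q>0$ whenever $|\beta\alpha(q)H_{1,q}|>H_q^*$, sub-dominant but \emph{supercritical} couplings are allowed in family~(c). For such a term the tail partition function $\E_{|y|=\sqrt{n-|x|^2}}[\exp(\text{family (c)})]$ is $e^{\Theta(n)}$, not $e^{o(n^{1-\epsilon})}$, and it depends on $|x|^2$ through the effective coupling on the shrunken sphere; this adds an $|x|^2$-dependent piece to your Laplace exponent beyond $\phi$, and you can no longer read off $t$ from $\wt H_n$ alone. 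The paper does not reduce to the single monomial but to the full NIM block $H_n^A$ (the top $n^\epsilon$ couplings for each $p$, with high probability pairwise index-disjoint) via a H\"older argument applied to bounded test functions $0\le f\le 1$, and then uses a convexity argument on the Taylor expansion (Proposition~\ref{conc_nim}) to show that the partition function concentrates on the region where every sub-dominant NIM term sits at power zero. That concentration, not any sub-threshold hypothesis, is what pins $t$ to the dominant monomial and makes the tail behave as if uniform for the purpose of computing $\langle\sigma_i^2\rangle$, $\langle\sigma_i^4\rangle$, and the restricted overlap (Proposition~\ref{Gibbs_upper}).

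\textbf{Below threshold.} The assertion that $G_n$ is an $e^{o(n^{1-\epsilon})}$-tilt of the uniform measure is also false. Theorem~\ref{GSE} gives $\tfrac1n\,\mathrm{GSE}=\max_p|\alpha(p)H_{1,p}|p^{-p/2}+O(n^{-\epsilon'})$, which is bounded away from zero on $\mathcal F_1$ (the thresholds $H_p^*$ are fixed positive constants, so the $H_{1,p}$ need not be small); hence $\sup_\sigma\beta H_n(\sigma)=\Theta(n)$ and the Radon--Nikodym derivative $dG_n/d(\mathrm{unif})$ ranges over $e^{\Theta(n)}$. Your bound $e^{o(n^{1-\epsilon})}e^{-cn^{1-\epsilon}}$ therefore does not close. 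The paper instead computes $\E[R_{1,2}^2\exp(\beta H_n(\sigma)+\beta H_n(\sigma'))]$ directly by Taylor expansion in the NIM model (Proposition~\ref{under_overlap}), using non-intersection to classify which cross terms survive parity and bounding each family by hand; the H\"older transfer to the full model then carries this over.
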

See Definition \ref{def:threshold}, \ref{M+} for the notation. This gives a corollary about the RSB structure for each case.
\begin{cor}
    The above implies that if the dominant interaction comes from $p$-spin, its overlap structure has $\lfloor \frac{p}{2}\rfloor$-RSB.
\end{cor}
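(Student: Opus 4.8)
The plan is to read the corollary off Theorem~\ref{Gibbs measure}: on the event $\mathcal{F}(H,p)$ I will identify the limiting distribution of the overlap $R_{1,2}$ and count the atoms of its support, since the number of distinct overlap values is, by definition, one more than the number of broken replica-symmetry levels. So let $\sigma^1,\sigma^2$ be two independent replicas from the Gibbs measure and split
\[
R_{1,2}=\frac1n\sum_{i=1}^{p}\sigma^1_i\sigma^2_i+\frac1n\sum_{i=p+1}^{n}\sigma^1_i\sigma^2_i=:A_n+B_n .
\]
The tail part is negligible: the restricted--overlap bound of Theorem~\ref{Gibbs measure} gives $\langle B_n^2\mid\mathcal{F}(H,p)\rangle=O(n^{-\epsilon})$, hence $B_n\to0$ in probability under the product Gibbs measure, and only $A_n$ survives.

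Next I would condition on which of the $2^{p-1}$ components each replica occupies. By Theorem~\ref{Gibbs measure} a single replica lies in $f_{\pm}(x)$ with conditional Gibbs mass $2^{-(p-1)}(1-O(n^{-\epsilon}))$ for $x\in M_+(H,p)$, and outside $\bigcup_{x}f_{\pm}(x)$ with mass $O(n^{-\epsilon})$; by independence the pair $(\sigma^1,\sigma^2)$ lies in $f_{\pm}(x)\times f_{\pm}(x')$ with mass $2^{-2(p-1)}(1-O(n^{-\epsilon}))$ for each $(x,x')\in M_+(H,p)^2$. Conditioned on that event, the tuple-concentration estimate of Theorem~\ref{Gibbs measure} yields $\tfrac1{\sqrt n}(\sigma^1_1,\dots,\sigma^1_p)\to\sqrt t\,x$ and $\tfrac1{\sqrt n}(\sigma^2_1,\dots,\sigma^2_p)\to\sqrt t\,x'$ in $L^2$; since the coordinates satisfy $|\sigma_i|/\sqrt n\le1$ a priori, the bounded bilinear expression $A_n=\langle\tfrac1{\sqrt n}(\sigma^1_1,\dots,\sigma^1_p),\tfrac1{\sqrt n}(\sigma^2_1,\dots,\sigma^2_p)\rangle_{\R^p}$ converges in $L^1$ to $t\langle x,x'\rangle_{\R^p}$. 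Assembling the pieces, under $\E\langle\,\cdot\mid\mathcal{F}(H,p)\rangle$ the law of $R_{1,2}$ converges weakly to $\sum_{x,x'\in M_+(H,p)}2^{-2(p-1)}\,\delta_{t\langle x,x'\rangle_{\R^p}}$, the discarded errors all being $O(n^{-\epsilon})$.

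The final step is purely combinatorial. Here $M_+(H,p)=\{x\in\{\pm1\}^p:\operatorname{sign}(H)\,x_1\cdots x_p=1\}$ has $2^{p-1}$ elements, and for $x,x'\in M_+(H,p)$ one has $\langle x,x'\rangle_{\R^p}=p-2\,d_H(x,x')$, where the Hamming distance $d_H(x,x')$ is \emph{even} because $\prod_{i=1}^p x_ix'_i=1$. Keeping $x$ fixed and letting $x'$ range over $M_+(H,p)$, the distance $d_H(x,x')$ attains every even value in $\{0,2,\dots,2\lfloor p/2\rfloor\}$ (flip an arbitrary even-size set of coordinates of $x$), so $t\langle x,x'\rangle_{\R^p}$ takes exactly the $\lfloor p/2\rfloor+1$ distinct values $t(p-4k)$, $k=0,1,\dots,\lfloor p/2\rfloor$, each with strictly positive limiting mass. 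Hence the limiting overlap distribution is supported on exactly $\lfloor p/2\rfloor+1$ points, which is precisely the assertion of $\lfloor p/2\rfloor$-step RSB. (For $p\le3$ this collapses to the two values $tp$ and $t(p-4)$, matching the degenerate $1$-RSB; for $p\ge4$ there are at least three overlap values, consistent with the breakdown of ultrametricity established elsewhere in the paper.)

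I do not expect a serious obstacle beyond Theorem~\ref{Gibbs measure}: the only delicate points are (i) propagating the $O(n^{-\epsilon})$ Gibbs errors through the conditioning on \emph{pairs} of components and through the products $\sigma^1_i\sigma^2_i$, which is controlled by the a priori bound $|\sigma_i|/\sqrt n\le1$ together with Cauchy--Schwarz, and (ii) verifying that every even Hamming distance in $[0,p]$ is realised inside $M_+(H,p)$, so that the count $\lfloor p/2\rfloor+1$ is exact and not merely an upper bound. Step (ii), together with the bookkeeping that converts the overlap-value count into the RSB level, is the real content of the corollary.
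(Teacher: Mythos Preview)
Your proposal is correct and is exactly the argument the paper has in mind: the corollary is stated without proof, as an immediate consequence of Theorem~\ref{Gibbs measure}, and your derivation (split $R_{1,2}$ into the $p$-coordinate part and the restricted overlap, use the $2^{p-1}$-component concentration to reduce to $t\langle x,x'\rangle$, then count even Hamming distances) is the natural unpacking of that implication. The only place the paper touches this computation explicitly is in the proof of Theorem~\ref{ultrametric}, where the overlap values for $p=2,3$ and $p\ge4$ are listed case by case, and your general count $\{t(p-4k):0\le k\le\lfloor p/2\rfloor\}$ agrees with those.
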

Furthermore, this shows the breaking of ultrametricity for a certain case which is one of the most important core structures of the spin glass theory.
\begin{thm}[Non-ultrametricity]\label{ultrametric}
    This heavy-tailed spin glass does not satisfy the ultrametricity if there is an interaction above the threshold, and the dominant term comes from $p \geq 4$. Otherwise, this still shows ultrametricity.
\end{thm}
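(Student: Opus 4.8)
The plan is to leverage Theorem~\ref{Gibbs measure} to pin down the exact support of the overlap distribution in the dominant regime, and then exhibit a triple of replicas whose pairwise overlaps violate the ultrametric inequality $R_{1,2} \ge \min(R_{1,3}, R_{2,3})$ (equivalently, among the three overlaps the two smallest are equal). Recall from Theorem~\ref{Gibbs measure} that, conditionally on $\mathcal{F}(H,p)$, a sample $\sigma$ has its first $p$ coordinates concentrated near $\sqrt{t}\,x$ for some $x \in M_+(H,p)$ — i.e. near $\sqrt{t}\,(\pm 1,\dots,\pm 1)$ with an even (or odd, depending on $\operatorname{sgn} H$) number of minus signs — while the remaining $n-p$ coordinates carry vanishing overlap. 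Hence two independent replicas $\sigma^1,\sigma^2$ have overlap $R_{1,2} = \tfrac1n\sum_{i\le p}\sigma^1_i\sigma^2_i + o(1)$, and $\tfrac1n\sum_{i\le p}\sigma^1_i\sigma^2_i = \tfrac{t}{n}\langle x^1, x^2\rangle + o(1)$ where $x^1,x^2$ range over the sign vectors in $M_+(H,p)$. Since $\langle x^1,x^2\rangle = p - 2k$ where $k$ is the number of coordinates in which $x^1,x^2$ differ, and $k$ can be any value in $\{0,2,4,\dots\}$ (parity forced, but otherwise free) when $p \ge 4$, the overlap takes at least three distinct values with positive probability: $R = \tfrac{tp}{n}$ (coinciding components), $R = \tfrac{t(p-4)}{n}$, and an intermediate value — wait, more carefully, the achievable inner products are $p, p-4, p-8,\dots$, so for $p\ge 4$ there are at least the two values $tp/n$ and $t(p-4)/n$ available, and for $p \ge 6$ at least three. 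The key point is that the \emph{scaled} overlaps $n R/t \in \{p, p-4, p-8, \dots\}$ realize a non-ultrametric metric on the sign vectors.

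The core combinatorial step is then: choose three sign vectors $x^1, x^2, x^3 \in M_+(H,p)$ (for $p\ge 4$) whose pairwise Hamming distances $k_{12}, k_{13}, k_{23}$ do \emph{not} satisfy the ultrametric condition — concretely, for $p=4$ pick $x^1 = (+,+,+,+)$, $x^2 = (-,-,+,+)$, $x^3 = (-,+,-,+)$ (all with an even number of minus signs, so all lie in $M_+$ when $H>0$), giving $k_{12}=k_{13}=k_{23}=2$, hence inner products all equal to $0$; that is actually ultrametric (equilateral). Instead take $x^1=(+,+,+,+)$, $x^2=(+,+,-,-)$, $x^3=(-,-,-,-)$: then $\langle x^1,x^2\rangle = 0$, $\langle x^2,x^3\rangle = 0$, $\langle x^1,x^3\rangle = -4$, so the three overlaps are $\{0, 0, -4t/n\}$ — the two smallest are $0$ and $-4t/n$, which are \emph{not} equal, violating ultrametricity. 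Since Theorem~\ref{Gibbs measure} guarantees each region $f_\pm(x)$ has conditional Gibbs mass $2^{-(p-1)}(1-o(1))$, the event that the triple of replicas lands in $(f_\pm(x^1), f_\pm(x^2), f_\pm(x^3))$ has probability bounded below by $2^{-3(p-1)}(1-o(1)) > 0$ under $G_n^{\otimes 3}$, and on that event the overlap array is within $o(1)$ of the non-ultrametric array above. This contradicts the standard definition of asymptotic ultrametricity (for every $\varepsilon>0$, $G_n^{\otimes 3}(R_{1,2} \ge \min(R_{1,3},R_{2,3}) - \varepsilon) \to 1$), proving the failure for $p\ge 4$.

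For the complementary claim — ultrametricity \emph{holds} when the dominant term comes from $p \le 3$ or when $\mathcal{F}_1$ occurs — I would argue that the overlap is essentially deterministic. Under $\mathcal{F}_1$, Theorem~\ref{Gibbs measure} gives $\langle R_{1,2}^2 \rangle = O(n^{-\epsilon})$, so $R_{1,2} \to 0$ in probability and the (degenerate) overlap distribution is a point mass at $0$, which is trivially ultrametric. Under $\mathcal{F}(H,p)$ with $p=2$ the sign vectors in $M_+$ are $(+,+)$ and $(-,-)$ (for $H>0$), so $\langle x^1,x^2\rangle \in \{2,-2\}$ wait this needs the $p=3$ case too: for $p=3$, $M_+(H,p)$ consists of the $2^{p-1}=4$ sign vectors with $\operatorname{sgn}(x_1x_2x_3)=\operatorname{sgn} H$; any two of them differ in $0$ or $2$ coordinates, so inner products lie in $\{3, -1\}$, i.e.\ the scaled overlap takes only \emph{two} values, and a two-valued overlap distribution is automatically ultrametric (with a single point mass and a lower plateau, the $\lfloor p/2\rfloor = 1$-RSB structure). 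The only subtlety is to confirm that \emph{all} $2^{p-1}$ regions are genuinely charged (so we cannot accidentally get a one-valued, i.e.\ replica-symmetric, distribution that is vacuously ultrametric but for the wrong reason) and that the $o(1)$ errors from the $n-p$ tail coordinates cannot conspire to create a third overlap value — both follow from the concentration estimates in Theorem~\ref{Gibbs measure}, namely $\langle (\tfrac1n\sum_{i>p}\sigma^1_i\sigma^2_i)^2 \rangle = O(n^{-\epsilon})$ and the per-region concentration $\langle |(\sigma_1,\dots,\sigma_p) - \sqrt{t}x|^2 \mid \sigma \in f_\pm(x)\rangle = O(n^{-\epsilon})$.

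The main obstacle I anticipate is \emph{quantitative control of the overlap from the tail coordinates uniformly across the positive-probability event} on which the three replicas are pinned to their prescribed regions: one must rule out that conditioning three replicas simultaneously into small regions introduces correlations among the $i>p$ coordinates that inflate $\tfrac1n\sum_{i>p}\sigma^1_i\sigma^2_i$ beyond $o(1)$. I would handle this by noting that, conditionally on $\mathcal{F}(H,p)$, the Hamiltonian restricted to the first $p$ coordinates decouples (up to the stated $O(n^{-\epsilon})$ corrections) from the bulk — this is exactly the content of the NIMR reduction and the concentration-on-the-sphere estimates invoked in the proof of Theorem~\ref{Gibbs measure} — so that the bulk coordinates of the three replicas are asymptotically i.i.d.\ uniform on a sphere of dimension $\sim n-p$, whence their pairwise overlaps are $O(n^{-1/2})$ with overwhelming probability by standard sphere concentration. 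Everything else is the elementary Hamming-geometry bookkeeping sketched above.
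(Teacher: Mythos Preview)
Your approach matches the paper's: invoke Theorem~\ref{Gibbs measure} to identify the limiting overlap values via the sign-vector geometry of $M_+(H,p)$, check that $p\le 3$ (and the sub-threshold case) give at most two overlap values and hence a degenerate $1$-RSB ultrametric structure, and for $p\ge 4$ exhibit three sign vectors whose pairwise inner products violate the isoceles condition. The paper uses $x^1=(+,\dots,+)$, $x^2$ with the first two entries flipped, $x^3$ with the first four flipped; this works uniformly for all $p\ge 4$, whereas your choice $x^3=(-,-,-,-)$ does not extend to odd $p$ since then $x^3\notin M_+$.

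There is, however, a scaling slip that as written voids the conclusion for $p\ge 4$. You write $\tfrac1n\sum_{i\le p}\sigma^1_i\sigma^2_i = \tfrac{t}{n}\langle x^1,x^2\rangle + o(1)$, but since $\sigma_i/\sqrt n\to\sqrt t\,\xi_i$ one has $\sigma_i^1\sigma_i^2/n\to t\,\xi_i^1\xi_i^2$, hence $\tfrac1n\sum_{i\le p}\sigma^1_i\sigma^2_i \approx t\,\langle x^1,x^2\rangle$, an $O(1)$ quantity with no extra $1/n$. With your scaling all three overlaps are $O(1/n)\to 0$, and then the asymptotic ultrametric inequality $R_{1,2}\ge\min(R_{1,3},R_{2,3})-\varepsilon$ that you quote holds trivially for every fixed $\varepsilon>0$; no violation can be detected. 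Once the factor is corrected, your $p=4$ triple gives overlaps $(0,0,-4t)$ and the argument goes through. Finally, your anticipated obstacle about tail-coordinate correlations under simultaneous three-replica conditioning is not a genuine issue: the replicas are independent under $G_n^{\otimes 3}$, so the pairwise bound $\langle R_{1,2,p}^2\rangle=O(n^{-\epsilon})$ from Theorem~\ref{Gibbs measure}, a union bound over the three pairs, and the fact that the conditioning event has probability bounded below by $2^{-3(p-1)}+o(1)$ already suffice.
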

We can also show that its scale of ground state energy is of the order $n$ and this justifies our choice of a different normalization for this heavy-tail model.
\begin{thm}[Ground state energy]\label{GSE}
    The ground state energy for mixed $p$ spin spherical model with heavy-tail interaction satisfies 
    \[
        \frac{1}{n}\text{GSE}=\max_{I,p} |\alpha(p)\beta H_I|p^{-p/2}+O(n^{-\epsilon'})
    \]
    for some $\epsilon'>0$.
\end{thm}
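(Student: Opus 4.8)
The plan is to establish matching upper and lower bounds for $\mathrm{GSE}=\max_{\sigma\in S_n}\beta H_n(\sigma)$ (the inverse temperature $\beta$ enters linearly and is carried without comment); the lower bound is routine and the upper bound carries all the difficulty. For the lower bound I would fix a pair $(I^{*},p^{*})$ attaining $M:=\max_{I,p}|\alpha(p)\beta H_{I,p}|p^{-p/2}$ and test $\beta H_n$ on the configuration supported on $I^{*}$: set $\sigma_j=\varepsilon_j\sqrt{n/p^{*}}$ for $j\in I^{*}$ with signs chosen so that $\prod_{j\in I^{*}}\varepsilon_j=\operatorname{sign}(\alpha(p^{*})H_{I^{*},p^{*}})$, and $\sigma_j=0$ otherwise. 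By the equality case of AM--GM, $\big|\prod_{j\in I^{*}}\sigma_j\big|=(n/p^{*})^{p^{*}/2}$, so the dominant monomial contributes exactly $Mn$; the only other monomials surviving on this configuration are indexed by proper subsets $J\subsetneq I^{*}$, and since $H_{J,q}=H'_{J,q}b_{n,q}^{-1}$ with $b_{n,q}\gtrsim n^{q/\alpha}\gg n$ and $|H'_{J,q}|\le n^{\epsilon}$ simultaneously over the $O(1)$ such $J$ with probability $1-O(n^{-c})$, their contribution is $O(n^{1-\epsilon'})$. Hence $\mathrm{GSE}\ge Mn-O(n^{1-\epsilon'})$.

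For the upper bound I would fix $\tau=n^{-\delta}$ and split each level $p$ into a \emph{top} part (sum over $|H_{I,p}|>\tau$) and a \emph{bulk} part (sum over $|H_{I,p}|\le\tau$), so that $\max_\sigma\beta H_n(\sigma)\le\max_\sigma\beta H_n^{\ge\tau}(\sigma)+\sum_p|\alpha(p)\beta|\max_\sigma|H_{n,p}^{<\tau}(\sigma)|$. The bulk is controlled by net-plus-concentration: after centering, for fixed $\sigma\in S_n$ the monomials are independent and bounded with $\E[H_{I,p}^2\mathbbm{1}_{|H_{I,p}|\le\tau}]\asymp\tau^{2-\alpha}\binom np^{-1}$ (Karamata), so Maclaurin's inequality $e_p(\sigma_1^2,\dots,\sigma_n^2)\le\binom np$ yields variance $\lesssim\tau^{2-\alpha}n^{-(p-2)}$. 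Because a single bulk monomial can still be of size $\asymp\tau n$ over $S_n$---too large for a Bernstein bound against an $e^{O(n)}$-net---I would truncate once more at a scale $\tau'\asymp n^{-1-\delta''}$: couplings below $\tau'$ are handled by Bernstein plus a Lipschitz/net argument (each term is $o(\eta n)$, so the deviation probability $e^{-c\eta/\tau'}$ beats the net entropy), while couplings in $(\tau',\tau]$ are handled by the deterministic bound $\sum_{I\in\mathcal M}\prod_{j\in I}|\sigma_j|\le\tfrac{\Delta(\mathcal M)}{p}\sum_j|\sigma_j|^p\le\tfrac{\Delta(\mathcal M)}{p}n^{p/2}$ (AM--GM term by term, then $\sum|\sigma_j|^p\le(\max|\sigma_j|)^{p-2}\sum\sigma_j^2$), where $\Delta(\mathcal M)=n^{o(1)}$ is the maximal number of ``medium'' index sets through a vertex---bounded via the Poisson approximation of the exceedance process, and $O(1)$ when $\alpha<1$. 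This gives $\max_\sigma|H_{n,p}^{<\tau}(\sigma)|=O(n^{1-\epsilon'})$, summable in $p$ under the standing decay of the $\alpha(p)$'s.

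The core of the upper bound is the top part, where the Non-Intersecting Monomial Reduction enters. There are at most $O(\tau^{-\alpha})=O(n^{\delta\alpha})$ top index sets in all, each with $|H|\le|H_{1,p}|=O(1)$. If the top sets $I_1,\dots,I_m$ are pairwise vertex-disjoint, then writing $t_\ell=\tfrac1n\sum_{j\in I_\ell}\sigma_j^2$ (so $\sum_\ell t_\ell\le1$), AM--GM gives $\prod_{j\in I_\ell}|\sigma_j|\le(t_\ell n/p_\ell)^{p_\ell/2}$, whence
\[
\beta H_n^{\ge\tau}(\sigma)\le n\sum_\ell c_\ell\,t_\ell^{p_\ell/2}\le n\sum_\ell c_\ell t_\ell\le n\max_\ell c_\ell\le Mn,\qquad c_\ell:=|\alpha(p_\ell)\beta H_{I_\ell}|\,p_\ell^{-p_\ell/2},
\]
using $t_\ell^{p_\ell/2}\le t_\ell$ since $p_\ell\ge2$. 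NIMR reduces the general overlapping case to this one: each shared coordinate forces competing large products to split the same portion of $\sum\sigma_j^2$, so an iterative pruning/merging of intersecting top monomials---$O(n^{\delta\alpha})$ steps, each costing a strictly lower-order symmetric function of $\sigma^2$---passes to a disjoint subfamily at total cost $O(n^{1-\epsilon'})$. Combining with the bulk estimate and optimizing $\delta,\delta''$ gives $\mathrm{GSE}\le Mn+O(n^{1-\epsilon'})$, completing the proof. I expect the bulk estimate---a uniform sub-$n$ bound for the injective norm of a heavy-tailed, truncated $p$-tensor over an exponentially large net, requiring the multi-scale truncation and, for $\alpha\in[1,2)$, careful degree control of the medium couplings---to be the principal obstacle, with the combinatorics of the NIMR pruning for overlapping top monomials the most novel ingredient.
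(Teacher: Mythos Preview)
Your overall architecture (lower bound by testing on the maximizing monomial, upper bound by top/bulk split plus AM--GM on the top) is correct and matches the paper's, but you substantially overcomplicate both halves of the upper bound compared to what the paper actually does.

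For the bulk, the paper dispenses with nets, Bernstein, multi-scale truncation, and degree control entirely. The bound is a one-line deterministic Cauchy--Schwarz: for each $p$,
\[
\Bigl|\sum_{i>n^{\epsilon}} H_{i,p}\,\sigma_{i,p}\,n^{-(p-2)/2}\Bigr|
\le\Bigl(\sum_{i>n^{\epsilon}} H_{i,p}^{2}\Bigr)^{1/2}
\Bigl(\sum_{i>n^{\epsilon}} \sigma_{i,p,1}^{2}\cdots\sigma_{i,p,p}^{2}\Bigr)^{1/2} n^{-(p-2)/2}
\le n\Bigl(\sum_{i>n^{\epsilon}} H_{i,p}^{2}\Bigr)^{1/2},
\]
using only $\sum_{I}\prod_{j\in I}\sigma_j^{2}\le(\sum_j\sigma_j^{2})^{p}=n^{p}$. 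The order-statistics bound $|H_{i,p}|\le n^{\epsilon_0}i^{-1/\alpha}$ (Lemma~\ref{heavy_upper}) then gives $\sum_{i>n^{\epsilon}}H_{i,p}^{2}\le n^{2\epsilon_0}n^{\epsilon(1-2/\alpha)}=o(1)$ since $\alpha<2$. This is uniform in $\sigma$, so no union bound over a net is ever needed; your concern that ``a single bulk monomial can still be of size $\asymp\tau n$'' is immaterial once you pass through Cauchy--Schwarz rather than attempting termwise control.

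For the top, you anticipate having to prune overlapping monomials via an iterative NIMR procedure. The paper avoids this: a first-moment computation on the monomial graph shows that the top $n^{\epsilon}$ index sets (for each $p\le M\log n$) are pairwise vertex-disjoint with probability $1-O(n^{2\epsilon-1})$, since any two random $p$-subsets of $[n]$ intersect with probability $O(p^{2}/n)$. So the disjoint case is the \emph{only} case with high probability, and your AM--GM chain $\sum_\ell c_\ell t_\ell^{p_\ell/2}\le\max_\ell c_\ell$ applies directly with no reduction step. Your pruning scheme may well work, but it is solving a problem that does not arise.
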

Due to the lack of concentration of measure, the energy landscape changes whenever we observe the interactions, in contrast to the usual Gaussian spin glass. Hence, we can express its free energy and energy landscape in a probabilistic way and that is why heavy tail spin glass does not have Parisi type formula and exhibits fundamentally different nature.
\begin{thm}[Probabilistic energy landscape]\label{prob_land}
    For $n\to \infty$ and $\alpha(p)$ profile is given, then the free energy and overlap converges to a random variable.
   For the limiting spin behavior, the number of spins for dominant interaction converges to a random variable $X$ which shows dominant term comes from $t-$spin interaction with probability $p_t$ and $X=1$ with probability $p_1$ which is the probability that every interaction is under the threshold. For this event $\mathcal{F}_t$, we have conditional limiting spin behavior. For this limiting spin behavior, we have the convergence in the distribution as $n\to \infty:$
   \begin{align*}
       &R_{1,2}|\mathcal{F}_1\to 0.
       \\& (\sigma_1,...,\sigma_p)|\mathcal{F}_p\to Y_p\text{ and } R_{1,2,p}|\mathcal{F}_p\to 0. 
   \end{align*}
   where $Y_p$ is a random variable and $R_{1,2,p}$ is the overlap between all but $1,...,p$ tuples.
\end{thm}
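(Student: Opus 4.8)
The plan is to assemble this statement from the quantitative theorems already established, upgrading their ``with high probability'' / ``$|\mathcal F(H,p)$'' conclusions into statements about convergence in distribution of the unconditioned random variables. First I would set up the master decomposition of the probability space. For each fixed $n$, the normalized couplings $\{|\beta\alpha(p)H_{1,p}|\}_p$ together with the comparison functionals $\{f_p(\beta\alpha(p)H_{1,p})\}_p$ determine which of the mutually (almost) exclusive events $\mathcal F_1,\mathcal F_2,\mathcal F_3,\dots$ occurs; by the definitions of $\mathcal F_1$ and $\mathcal F_p$ these events partition the sample space up to a null set (ties occur with probability zero since $H$ has a continuous tail). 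The first step is therefore to compute $p_t \deq \lim_{n\to\infty}\P(\mathcal F_t)$ and $p_1\deq\lim_{n\to\infty}\P(\mathcal F_1)$, and to check $\sum_{t\ge2}p_t+p_1=1$. This is a pure extreme-value computation: after the normalization by $b_{n,p}$, the top order statistic $|H_{1,p}|$ converges in distribution to a Fréchet-type law (this is exactly why $b_{n,p}=\inf\{t:\P(|H|>t)<\binom np^{-1}\}$ was chosen), so $\P(|\beta\alpha(p)H_{1,p}|>H_p^*)$ converges to an explicit constant, and the joint law of the finite vector $(|H_{1,p}|)_{p\le P}$ across the finitely many $p$ with $\alpha(p)\ne0$ converges to a product of independent Fréchet variables; feeding these into the functionals $f_p$ (continuous by Definition~\ref{def:threshold}) and taking the argmax gives the limiting random index, whose law is $(p_1,p_2,\dots)$. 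This defines the random variable $X$.

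The second step is the conditional convergence. On the event $\mathcal F_1$, Theorem~\ref{Gibbs measure} gives $\langle R_{1,2}^2\mid\mathcal F_1\rangle=O(n^{-\epsilon})$; since $|R_{1,2}|\le1$ this forces $R_{1,2}\mid\mathcal F_1\to0$ in distribution (indeed in $L^2$). On the event $\mathcal F_p$ (dominant monomial of degree $p$), I would condition further on the value $H_{1,p}=H$, i.e.\ work inside $\mathcal F(H,p)$, apply Theorem~\ref{Gibbs measure}: the restricted tuple $\tfrac1{\sqrt n}(\sigma_1,\dots,\sigma_p)$ converges in distribution to $[\sqrt t(\xi_1,\dots,\xi_p)\mid H\xi_1\cdots\xi_p>0]$ with $t=t(H)$ as given there, and the restricted overlap $R_{1,2,p}=\tfrac1n\sum_{i>p}\sigma_i^1\sigma_i^2$ satisfies $\langle R_{1,2,p}^2\mid\mathcal F(H,p)\rangle=O(n^{-\epsilon})$, hence $R_{1,2,p}\mid\mathcal F_p\to0$. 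Then $Y_p$ is defined by mixing these conditional limit laws over the limiting conditional distribution of $H_{1,p}$ given $\mathcal F_p$ (a truncated/tilted Fréchet law, obtained from step one), so that $(\sigma_1,\dots,\sigma_p)\mid\mathcal F_p\to Y_p$. Finally, the convergence of the free energy to a random variable follows by the same bookkeeping applied to Theorem~\ref{Free}: on $\mathcal F(H,p)$, $\tfrac1n\log Z_n=f_p(\beta\alpha(p)H)+O(n^{-\epsilon})$, and on $\mathcal F_1$ it is $O(n^{-\epsilon})$; mixing over the limiting law of the dominant coupling yields a genuine (non-degenerate, non-constant) limiting distribution for $\tfrac1n\log Z_n$, which is precisely the assertion that no deterministic Parisi-type variational formula can hold.

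The main obstacle I anticipate is the interchange of limits in the conditioning: Theorems~\ref{Free} and~\ref{Gibbs measure} are stated conditionally on $\mathcal F(H,p)$ with error $O(n^{-\epsilon})$ that may depend on $H$ (and degenerate as $|H|\downarrow H_p^*$ or $|H|\to\infty$), so to push the conditional convergence through the mixing integral I need uniform control of these error terms on compact subsets of $\{|H|>H_p^*\}$ together with a tightness/uniform-integrability argument handling the tail contributions $|H|$ large and the boundary $|H|\approx H_p^*$. Concretely I would (i) show the error in Theorem~\ref{Free} is $O(n^{-\epsilon})$ locally uniformly in $H$ by inspecting the NIMR estimates, (ii) dominate the $|H|$-tail using that $f_p$ grows at a controlled rate and the Fréchet density decays polynomially, and (iii) treat a shrinking neighborhood of $H_p^*$ separately, using that its limiting mass is continuous in the cutoff. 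Once this uniform-conditioning lemma is in place, the rest is the routine bookkeeping of assembling a mixture of the pieces already proved, and the theorem follows.
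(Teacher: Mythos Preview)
Your proposal is correct and follows essentially the same route as the paper: identify the limiting law of the top order statistics $|H_{1,p}|$ as independent Fr\'echet variables, read off the limiting index (and hence $p_t$, $p_1$) from the argmax of $f_p(\beta\alpha(p)X_p)$, and then invoke Theorems~\ref{Free} and~\ref{Gibbs measure} on each conditional event. In fact your treatment is considerably more careful than the paper's own proof, which is a one-paragraph sketch that simply cites those theorems and writes $F_n\to\max_p f_p(\beta\alpha(p)X_p)$ without addressing the uniformity-in-$H$ issues you flag; your proposed handling of the boundary $|H|\approx H_p^*$ and the large-$|H|$ tail is a genuine (and welcome) addition, though note that the model allows infinitely many nonzero $\alpha(p)$, so your ``finitely many $p$'' reduction needs a truncation step using $\sum_p\alpha(p)(1+\epsilon)^p<\infty$.
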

The above theorem implies this corollary which gives controlled probabilistic energy landscape.
\begin{cor}[Tuning probabilistic energy landscape]
    For any given temperature, by tuning the coefficients $\alpha(p)$, we can tune $p_t$ above to be our desired number. For any given sequence, we can make the dominant interaction come from the $t$-spin interaction with $p_t$. Furthermore, for any given sequence $a_t$ with $\sum a_t=1$ we can find $\alpha(p)'s$ such that this model shows $t$-RSB with probability $a_t$.
\end{cor}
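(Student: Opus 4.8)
The plan is to realize the limiting law as a competition among independent Fr\'echet variables, engineer the winner probabilities by a stick-breaking construction, and then pin the values down exactly by a Poincar\'e--Miranda argument. I start from the limiting picture. By the choice of $b_{n,p}$, the top order statistic satisfies $|H_{1,p}|\Rightarrow\eta_p$ with $\P(\eta_p\le x)=\e{-x^{-\alpha}}$, and the $\eta_p$ are independent because the coupling families are; moreover $f_p$ depends on $H_{1,p}$ only through $|\beta\alpha(p)H_{1,p}|$, since a $p$-monomial on $S_n$ is invariant under a sign flip of one coordinate. Theorem~\ref{prob_land} already provides the limiting law of the winner label $X$; unwinding $\mathcal F_t$ and $\mathcal F_1$ (Definition~\ref{def:threshold}) identifies it explicitly. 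Writing $W_q:=f_q(\beta\alpha(q)\eta_q)\ge0$, with $W_q=0$ precisely when $\beta\alpha(q)\eta_q\le H_q^*$, we have
\[
p_t=\P\!\big(W_t>W_q\ \text{for all }q\neq t,\ W_t>0\big),\qquad
p_1=\P\!\big(W_q=0\ \text{for all }q\big)=\prod_q\P(W_q=0),
\]
and $\sum_t p_t=1$. For a fixed realization of the $\eta$'s, increasing $\alpha(s)$ can only increase $W_s$ (it is increasing in its argument above threshold, and the threshold in $\eta_s$ shrinks), so $p_s$ is nondecreasing in $\alpha(s)$ and nonincreasing in every $\alpha(q)$ with $q\neq s$. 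The map $\alpha(\cdot)\mapsto(p_t)_t$ is also continuous---the tie sets $\{W_t=W_q>0\}$ have Fr\'echet measure zero by atomlessness---with $p_s\to0$ as $\alpha(s)\to0$ and $p_s\to1$ as $\alpha(s)\to\infty$ while the other coefficients stay fixed (then $W_s\to\infty$ almost surely).

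Next I engineer the probabilities. Restrict to arities $\{2,\dots,P\}$, setting $\alpha(q)=0$ otherwise. When the scales are widely separated, $\alpha(P)\gg\alpha(P-1)\gg\cdots\gg\alpha(2)$, the competition becomes lexicographic in decreasing arity up to a vanishing error: arity $t$ wins iff $W_t>0$ and $W_q=0$ for all $q>t$. With $\pi_q:=\P(W_q>0)=1-\e{-(\beta\alpha(q)/H_q^*)^{\alpha}}$, a bijective increasing function of $\alpha(q)$, this gives $p_P\approx\pi_P$, $p_{P-1}\approx\pi_{P-1}(1-\pi_P)$, $\dots$, $p_1\approx\prod_q(1-\pi_q)$, a stick-breaking parametrization. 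Given a target $(a_t)$ in the open simplex, fix a small $\delta>0$ with $(P-1)\delta<a_1$ and solve, from the top down, $\pi_t=\frac{a_t+\delta}{\prod_{q>t}(1-\pi_q)}$; this is feasible (all $\pi_q\in(0,1)$) exactly because $a_1>0$, and it telescopes to $\prod_{q\ge2}(1-\pi_q)=a_1-(P-1)\delta>0$. Once the scales are separated enough that the lexicographic error drops below $\delta$, the resulting corner $\mathbf M=(\alpha(2),\dots,\alpha(P))$ satisfies $p_s(\mathbf M)>a_s$ for every $s$.

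Now I realize $(a_t)$ exactly. On the compact box $B=\prod_{s=2}^{P}[0,M_s]$ put $g_s(\alpha):=p_s(\alpha)-a_s$. On the face $\{\alpha(s)=0\}$ there is no $s$-monomial, so $p_s=0$ and $g_s=-a_s<0$; on the face $\{\alpha(s)=M_s\}$, monotonicity in the other coordinates makes $p_s$ minimal at the corner $\mathbf M$, where $p_s>a_s$, so $g_s>0$. Since $g$ is continuous on $B$, the Poincar\'e--Miranda theorem yields $\alpha^\star\in B$ with $g(\alpha^\star)=0$, i.e.\ $p_t=a_t$ for $t=2,\dots,P$, whence $p_1=1-\sum_{t\ge2}a_t=a_1$. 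This gives the first two assertions. For the RSB version, use the corollary that $p$-spin dominance produces $\lfloor p/2\rfloor$-RSB while $\mathcal F_1$ gives the trivial geometry: for a target distribution $(a_\ell)_{\ell\ge0}$ over RSB levels apply the above with $p_1=a_0$ and $p_{2\ell}=a_\ell$ for $\ell\ge1$, so that $\lfloor 2\ell/2\rfloor=\ell$ delivers $\ell$-RSB with probability $a_\ell$. A target of infinite support is handled by finitely supported approximations and a compactness argument, the approximating profiles chosen to decay fast enough in $p$ for $H_n$ to remain well defined.

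The main obstacle is the quantitative decoupling: showing that widely separated scales make the competition lexicographic with a controllable error. This is where the heavy tail bites, since $W_q$ has only polynomially small tails, so bounding $\P(W_q>W_{q'}>0)$ for $q<q'$ on well-separated scales requires genuine heavy-tail estimates rather than Gaussian concentration. Alongside it one must verify the continuity of $\alpha(\cdot)\mapsto(p_t)_t$ and the boundary limit $p_s\to1$. Granting these analytic inputs the corner $\mathbf M$ exists, the Poincar\'e--Miranda step closes the argument, and the reduction of the RSB statement to the arity statement is immediate.
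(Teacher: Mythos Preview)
The paper does not actually prove this corollary; it is asserted in one line as a direct consequence of Theorem~\ref{prob_land}. Your Poincar\'e--Miranda framework is therefore a genuine proof strategy that goes well beyond what the paper offers, and several pieces of it are correct: the monotonicity of $p_s$ in the coefficients, the continuity of $\alpha\mapsto(p_t)$ (tie sets have Fr\'echet measure zero), the boundary behaviour $p_s\to 0$ as $\alpha(s)\to 0$, and the reduction of the RSB claim to the arity claim via $p=2\ell$.

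The gap is in your construction of the corner $\mathbf M$ with $p_s(\mathbf M)>a_s$ for all $s\ge 2$. Your stick-breaking step first \emph{prescribes} the activation probabilities $\pi_q=1-\e{-(\beta\alpha(q)/H_q^*)^{\alpha}}$ from the target recursion, and this pins down each $\alpha(q)$ uniquely; there is no residual degree of freedom to ``separate scales'' afterwards. Worse, the lexicographic approximation is simply false for those $\alpha$'s: conditional on $W_P>0$, the argument $\beta\alpha(P)\eta_P$ has positive density just above $H_P^*$, so $W_P=f_P(\beta\alpha(P)\eta_P)$ has positive density near $0$ and fails to dominate a moderate $W_{P-1}$ with nonvanishing probability. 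Thus ``arity $t$ wins iff $W_t>0$ and $W_q=0$ for all $q>t$'' is not approximately true, and the corner $\mathbf M$ as you build it need not satisfy $p_s(\mathbf M)>a_s$. The fix is to build $\mathbf M$ from the opposite limit. From Definition~\ref{def:threshold} one checks $\lambda_q(x)\sim x/(2q^{q/2})$ and hence $f_q(x)\sim x/q^{q/2}$ as $x\to\infty$; taking $\alpha(q)=\lambda c_q$ and sending $\lambda\to\infty$, all $W_q>0$ eventually and the winner problem becomes $\arg\max_q c_q\eta_q/q^{q/2}$, a competition of independent Fr\'echet$(\alpha)$ variables with scales $s_q=c_q/q^{q/2}$, for which $\P(\text{arity }s\text{ wins})=s_s^{\alpha}/\sum_q s_q^{\alpha}$. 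Choose $c_q=q^{q/2}b_q^{1/\alpha}$ with any $b_s>a_s$ summing to $1$ (possible precisely because $a_1>0$); then $p_s(\lambda c)\to b_s>a_s$, so for $\lambda$ large the corner $\mathbf M=\lambda c$ has $p_s(\mathbf M)>a_s$ for every $s$, and your Poincar\'e--Miranda step closes the argument verbatim.
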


\subsection{Sketch of proof} 

We reduce its model to non-intersecting monomials. We suggest the method called \emph{NIMR} (Non-intersecting monomial reduction). We start with the observation that we can directly compute the partition function of $H_n(\sigma)=H\sigma_1...\sigma_p n^{-(p-2)/2}$ and investigate the properties of this function. Surprisingly, this function can be computed exactly as a function of the moments of standard normal variable. This function exhibits a phase transition at a certain threshold $H_p^*$. The free energy scale shows the phase transition of the scale $n^{2-p}$ to the scale $n$ as $H$ changes from below the threshold to above the threshold. When we calculate this value, we use the Taylor expansion. This gives a polynomial-type expansion for the partition function, and it also exhibits a phase transition.  Under the threshold, the degree near the 0 part is enough to obtain its asymptotics; however, above the threshold, its essential part is concentrated near $\lambda_p(H)n$.
We extend this result to the \emph{NIM} model whose Hamiltonian consists of monomials that do not intersect using convexity and these concentration properties. This NIM corresponds to the largest $n^{\epsilon}$ interactions in our original model for $2\leq p\leq M\log n $. This will be proved using a random graph argument in the next section. In Section 4, we lift those NIM model results to our original heavy-tail model. We first divide our Hamiltonian into 5 parts. The first parts correspond to the NIM model, and we prove that the other parts are small enough. To show such a smallness for each part, we use the ideas of heavy-tail random matrix theory, coloring argument, and concentration inequality.

\subsection{Organization of the paper}
Section~2 introduces the heavy-tailed mixed $p$-spin model, fixes notation, and analyzes a single monomial Hamiltonian as a warm-up.
Section~3 establishes the full phase diagram and Gibbs geometry for the
\emph{Non-Intersecting Monomial (NIM) model}.   
Section~4 lifts those results to the complete Hamiltonian and proves our main theorems. Appendix A contains calculations about single monomial analysis. Appendix B calculates the refined free energy fluctuation for the subthreshold regime with $\alpha<1$.

\section{Model description and single monomial analysis}
In this section, we explain the model and Hamiltonian normalization. Moreover, we explain the lemmas about phase transition for calculating $\E[\exp(H\sigma_1...\sigma_pn^{-(p-2)/2})]$ and understanding its term by term of Taylor expansion. This phase transition and convexity of that term by term analysis will be used to prove concentration property and free energy phase transition for NIM model.
\begin{defn}[Heavy-tailed random variable] \label{stable law}
  We say that a random variable $X$ is heavy-tailed with exponent $\alpha$ if $\P(|X|>u)=L(u){u^{-\alpha}}$ for some $\alpha \in (0, 2)$ and some slowly varying function $L$, i.e. $\lim_{x\to\infty} L(tx)/L(x)=1$ for all $t$. We further assume that for any $\delta > 0$, there exists $x_0$ such that $L(x) e^{x^{\delta}}$ is an increasing function on $(x_0, \infty)$. 
\end{defn}
\begin{rem}\label{rem:slow_vary}
In Definition \ref{stable law}, the assumption on the monotonicity of the function $L(x) e^{x^{\delta}}$ is purely technical, this is just used to ensure that the largest terms and the second largest terms are not too close as  \cite{kim2024fluctuations}. This assumption is satisfied by many slowly varying functions considered in the literature, especially poly-log functions $(\log(x))^p$ for some $p \in \mathbb{R}$.
\end{rem}
We also define that an event occurs with high probability to describe its behaviour mathematically rigorous.
\begin{defn}[High probability event]
  We say that an event $F_n$ holds with high probability if there exist (small) $\delta>0$ and (large) $n_0$ such that $\P(F_n)\geq 1-n^{-\delta}$ holds for any $n > n_0$.
\end{defn}
We begin by computing its monomial expansion with respect to a standard normal random variable. The proofs of Lemmas 2.4–2.7 and 2.10–2.14 are deferred to Appendix \ref{app:proof}.
\begin{lem}\label{integral}
Let $\sigma=(\sigma_1,\dots,\sigma_n)$ be a point on the sphere $\{\sigma\in\mathbb{R}^n: \sum_{i=1}^n\sigma_i^2=n\}$ endowed with the uniform probability measure. Then for any nonnegative integers $i_1,\dots,i_k$, we have
\[
\mathbb{E}\bigl[\sigma_1^{\,i_1}\cdots \sigma_k^{\,i_k}\bigr]=n^{\frac{1}{2}\sum_{t=1}^k i_t}\frac{\mathbb{E}[|X|^{\,n-1}]\mathbb{E}[X^{\,i_1}]\cdots \mathbb{E}[X^{\,i_k}]}{\mathbb{E}\bigl[|X|^{\,i_1+\cdots+i_k+n-1}\bigr]},
\]
and \[
\mathbb{E}\bigl[|\sigma_1|^{\,i_1}\cdots |\sigma_k|^{\,i_k}\bigr]=n^{\frac{1}{2}\sum_{t=1}^k i_t}\frac{\mathbb{E}[|X|^{\,n-1}]\mathbb{E}[|X|^{\,i_1}]\cdots \mathbb{E}[|X|^{\,i_k}]}{\mathbb{E}\bigl[|X|^{\,i_1+\cdots+i_k+n-1}\bigr]},
\]
where $X\sim N(0,1)$.
\end{lem}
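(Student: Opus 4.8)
The plan is to exploit the Gaussian representation of the uniform measure on the sphere. Recall that if $X_1,\dots,X_n$ are i.i.d.\ standard normals and $N^2=\sum_{i=1}^n X_i^2$, then $\sqrt{n}\,(X_1,\dots,X_n)/N$ is exactly uniform on the sphere $\{\sigma:\sum\sigma_i^2=n\}$, and moreover the radial part $N$ is independent of the direction $(X_1,\dots,X_n)/N$. Consequently, for any nonnegative integers $i_1,\dots,i_k$,
\[
\mathbb{E}\Bigl[\prod_{t=1}^k\Bigl(\tfrac{\sqrt{n}\,X_t}{N}\Bigr)^{i_t}\Bigr]
= n^{\frac12\sum_t i_t}\,\mathbb{E}\Bigl[\tfrac{X_1^{i_1}\cdots X_k^{i_k}}{N^{i_1+\cdots+i_k}}\Bigr].
\]
The key identity is the independence of $N$ from the normalized direction, which lets me write $\mathbb{E}\bigl[X_1^{i_1}\cdots X_k^{i_k}/N^{\,m}\bigr]=\mathbb{E}\bigl[X_1^{i_1}\cdots X_k^{i_k}\bigr]\,\mathbb{E}[N^{-m}]$ only after I reintroduce a compensating power of $N$; the clean way is instead to compute $\mathbb{E}[X_1^{i_1}\cdots X_k^{i_k}\,N^{\,r}]$ directly for a general exponent $r$ and then take ratios.

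First I would compute, for $X\sim N(0,1)^{\otimes n}$ and any real $r>-n$,
\[
\mathbb{E}\bigl[|X_1|^{i_1}\cdots|X_k|^{i_k}\,N^{\,r}\bigr],
\]
where $N=(\sum_{j=1}^n X_j^2)^{1/2}$. Passing to polar coordinates in $\mathbb{R}^n$, this factorizes as (surface integral over $S^{n-1}$ of $|\omega_1|^{i_1}\cdots|\omega_k|^{i_k}$) times (a one-dimensional radial integral $\int_0^\infty \rho^{\,i_1+\cdots+i_k+r}\,\rho^{n-1}e^{-\rho^2/2}\,d\rho$, up to the Gaussian normalization). The radial integral is a Gamma function; crucially, the surface-integral factor is exactly $\mathbb{E}[\,|\sigma_1|^{i_1}\cdots|\sigma_k|^{i_k}\,]$ up to the explicit power of $n$, because the uniform measure on $S_n$ is the pushforward of the surface measure on $S^{n-1}$ under scaling by $\sqrt n$. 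Equivalently, I can avoid coordinates entirely: since $N$ is independent of $\sigma:=\sqrt n\,X/N$, I have
\[
\mathbb{E}\bigl[|X_1|^{i_1}\cdots|X_k|^{i_k}\,N^{s}\bigr]
= n^{-\frac12\sum_t i_t}\,\mathbb{E}\bigl[|\sigma_1|^{i_1}\cdots|\sigma_k|^{i_k}\bigr]\;\mathbb{E}\bigl[N^{\,s+\sum_t i_t}\bigr]
\]
for every $s$ with $s+\sum_t i_t>-n$, simply by writing $|X_t|=|\sigma_t|N/\sqrt n$. Now I choose $s$ so that the left side is a product of independent one-dimensional Gaussian moments: taking $s=-(i_1+\cdots+i_k)+(n-1)-(n-1)$ is not quite it — instead I take $s$ such that $N^{s}\prod|X_t|^{i_t}$ collapses. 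The cleanest choice: apply the displayed identity twice, once with the $i_t$'s and once with all $i_t=0$, i.e.
\[
\mathbb{E}\bigl[N^{\,m}\bigr] = n^{0}\cdot 1\cdot \mathbb{E}[N^{\,m}], \qquad
\mathbb{E}\bigl[|X_1|^{i_1}\cdots|X_k|^{i_k}\bigr]=\prod_{t=1}^k\mathbb{E}\bigl[|X|^{i_t}\bigr],
\]
the latter by independence of the coordinates. Combining, with $m=i_1+\cdots+i_k$,
\[
\mathbb{E}\bigl[|X_1|^{i_1}\cdots|X_k|^{i_k}\bigr]
= n^{-\frac12 m}\,\mathbb{E}\bigl[|\sigma_1|^{i_1}\cdots|\sigma_k|^{i_k}\bigr]\;\frac{\mathbb{E}[N^{\,m+n-1}]}{\mathbb{E}[N^{\,n-1}]}\cdot\frac{\mathbb{E}[N^{\,n-1}]}{\mathbb{E}[N^{\,n-1}]},
\]
wait — this still has two unknowns. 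The resolution is to use the identity with $s=n-1-\sum i_t$ so that $N^{s}\prod|X_t|^{i_t}$ has total degree $n-1$ matching $\mathbb{E}[|X|^{n-1}]$-type normalizations; I will track the Gamma functions $\Gamma\!\bigl(\tfrac{a+n}{2}\bigr)$ that arise and verify that all powers of $n$ and all Gamma ratios reduce precisely to the claimed expression $\mathbb{E}[|X|^{n-1}]\prod_t\mathbb{E}[|X|^{i_t}]/\mathbb{E}[|X|^{\,m+n-1}]$ after using $\mathbb{E}[|X|^a]=2^{a/2}\Gamma(\tfrac{a+1}{2})/\sqrt\pi$ and $\mathbb{E}[N^{\,b}]=2^{b/2}\Gamma(\tfrac{b+n}{2})/\Gamma(\tfrac n2)$.

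Concretely, then, the steps in order are: (1) set up the Gaussian/polar representation and record the independence of $N$ and the angular part; (2) evaluate $\mathbb{E}[N^b]$ and $\mathbb{E}[|X|^a]$ as Gamma functions; (3) compute $\mathbb{E}\bigl[\prod_t|X_t|^{i_t}\bigr]=\prod_t\mathbb{E}[|X|^{i_t}]$ by coordinatewise independence, and separately $\mathbb{E}\bigl[\prod_t|X_t|^{i_t}\,N^{\,(n-1)-m}\bigr]$ by one more polar-coordinate computation, whose radial part gives $\mathbb{E}[N^{\,n-1}]/(2^{\text{stuff}})$ and whose angular part gives $n^{-m/2}\,\mathbb{E}[\prod|\sigma_t|^{i_t}]$; (4) also compute $\mathbb{E}\bigl[\prod_t|X_t|^{i_t}\,N^{\,(n-1)-m}\bigr]$ directly as $\mathbb{E}\bigl[\prod_t|X_t|^{i_t}\bigr]\cdot(\text{correction})$ — no, the direct route is step (3) combined with the separation $|X_t|^{i_t}=n^{-i_t/2}|\sigma_t|^{i_t}N^{i_t}$ and independence, giving $n^{-m/2}\mathbb{E}[\prod|\sigma_t|^{i_t}]\,\mathbb{E}[N^{\,n-1}]$; (5) equate the two evaluations of $\mathbb{E}\bigl[\prod_t|X_t|^{i_t}\,N^{\,(n-1)-m}\bigr]$ and solve for $\mathbb{E}[\prod|\sigma_t|^{i_t}]$. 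Then the signed version follows verbatim because for odd $i_t$ both sides vanish (odd symmetry of $\sigma_t\mapsto-\sigma_t$ and of $X_t\mapsto-X_t$) and for even $i_t$ it coincides with the absolute-value version.

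The main obstacle is purely bookkeeping: making sure every power of $2$, every $\sqrt\pi$, and every $\Gamma((\,\cdot\,+n)/2)$ cancels so that the final formula is exactly a ratio of moments of $|X|$ with no leftover $\Gamma(n/2)$ or powers of $2$; the identity $|X_t|^{i_t}=n^{-i_t/2}|\sigma_t|^{i_t}N^{i_t}$ together with the $N\perp\sigma$ independence is the conceptual crux that makes all of this go through, and it is the one place where the special structure of the Gaussian (rotational invariance) is genuinely used.
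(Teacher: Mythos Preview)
Your approach is correct and is essentially the same as the paper's: both exploit the polar decomposition of the standard Gaussian in $\mathbb{R}^n$, the paper by defining the surface integrals $J_{i_1,\dots,i_k}(r)$ and integrating in $r$, you by invoking directly the independence of the radius $N=\|X\|$ from the direction $\sigma=\sqrt{n}\,X/N$. Your writeup wanders, but the clean line is already there in your plan: from $X_t=n^{-1/2}\sigma_t N$ and independence one gets $\prod_t\mathbb{E}[X^{i_t}]=\mathbb{E}\bigl[\prod_t X_t^{i_t}\bigr]=n^{-m/2}\,\mathbb{E}\bigl[\prod_t\sigma_t^{i_t}\bigr]\,\mathbb{E}[N^{m}]$ with $m=\sum_t i_t$, and the Gamma identities $\mathbb{E}[|X|^a]=2^{a/2}\Gamma(\tfrac{a+1}{2})/\sqrt{\pi}$ and $\mathbb{E}[N^b]=2^{b/2}\Gamma(\tfrac{b+n}{2})/\Gamma(\tfrac{n}{2})$ give $\mathbb{E}[N^{m}]=\mathbb{E}[|X|^{m+n-1}]/\mathbb{E}[|X|^{n-1}]$, which is exactly the claimed formula; drop the detour through $\mathbb{E}\bigl[\prod_t|X_t|^{i_t}N^{(n-1)-m}\bigr]$.
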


Applying the above lemma, we can compute
$\E[\exp(H \sigma_1\cdots\sigma_pn^{-(p-2)/2})]$. Remarkably, this quantity exhibits a phase transition at the threshold
\(H_p^{*}\); the value itself, as well as the qualitative behaviour of its
Taylor expansion, changes abruptly when \(H\) crosses \(H_p^{*}\).
\begin{lem}\label{exp_basic} The expectation value of the exponential function exhibits a phase transition as described below. There is a threshold $H_p$ such that for $|H|>H^*_p+n^{-\epsilon}$,
\[
    \log \E[\exp(H\sigma_1\cdots\sigma_pn^{-(p-2)/2})]= nf_p(H)(1+O(n^{-\epsilon}))
\]
and  
for $H<H^*_p-n^{-\epsilon}$,
\begin{itemize}
    \item $p\geq 3$:\[
    \log \E[\exp(H\sigma_1\cdots\sigma_pn^{-(p-2)/2})]= \frac{1}{2}n^{2-p}H^2(1+O(n^{-\epsilon})).\]
    \item $p=2$:
    \[
    \log \E[\exp(H\sigma_1\cdots\sigma_pn^{-(p-2)/2})]= (1-H^2)^{-1/2}(1+O(n^{-\epsilon})).\]

\end{itemize}

\end{lem}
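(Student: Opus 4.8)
The plan is to start from the exact formula in Lemma~\ref{integral}. Writing $g(H) \deq \E[\exp(H\sigma_1\cdots\sigma_p n^{-(p-2)/2})]$, expand the exponential in a power series. Odd powers of $\sigma_1\cdots\sigma_p$ have vanishing expectation (the sphere measure is invariant under $\sigma_1 \mapsto -\sigma_1$), so only even powers survive, and by Lemma~\ref{integral},
\[
g(H) = \sum_{k\ge 0} \frac{H^{2k}}{(2k)!}\, n^{-k(p-2)}\, \E\bigl[\sigma_1^{2k}\cdots\sigma_p^{2k}\bigr]
     = \sum_{k\ge 0} \frac{H^{2k}}{(2k)!}\, n^{kp}\, n^{-k(p-2)}\,\frac{\E[|X|^{n-1}]\,\bigl(\E[X^{2k}]\bigr)^p}{\E\bigl[|X|^{2kp+n-1}\bigr]}.
\]
Since $\E[X^{2k}] = (2k-1)!!$ and $n^{kp-k(p-2)} = n^{2k}$, the $k$-th term is $a_k(H) \deq \dfrac{H^{2k}}{(2k)!}\, n^{2k}\,\dfrac{\E[|X|^{n-1}]\,((2k-1)!!)^p}{\E[|X|^{2kp+n-1}]}$. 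The first step is therefore to obtain sharp asymptotics for the ratio $r_k \deq \E[|X|^{n-1}]\,((2k-1)!!)^p/\E[|X|^{2kp+n-1}]$ using the Gamma-function identity $\E[|X|^s] = 2^{s/2}\Gamma((s+1)/2)/\sqrt{\pi}$, and then to analyze the sum $\sum_k a_k(H)$ by a Laplace-type (largest-term) argument: identify the maximizing index $k^\ast = k^\ast(H,n)$, show the terms are log-concave in $k$ (this is where I would invoke the convexity statements the excerpt promises for the term-by-term analysis), and conclude that $g(H)$ is comparable to its largest term up to polynomial factors, so $\log g(H) = \log a_{k^\ast}(H) + O(\log n)$.

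The second step is to carry out the maximization. Using Stirling, $\log a_k(H) = 2k\log(nH) - \log(2k)! + p\log((2k-1)!!) + \log\E[|X|^{n-1}] - \log\E[|X|^{2kp+n-1}]$, and after the Gamma asymptotics this becomes, to leading order, $n$ times an explicit function of the scaled variable $x = 2k/n$ (plus lower-order terms). Define $f_p(H)$ as the supremum over $x \ge 0$ of that function; this is the content of Definition~\ref{def:threshold} and I would match conventions there. Two regimes emerge: the interior critical point $x^\ast(H) > 0$ exists and is the relevant maximizer precisely when $|H|$ exceeds the threshold $H_p^\ast$ (the value of $H$ at which the derivative at $x = 0^+$ changes sign), giving $\log g(H) = n f_p(H)(1 + O(n^{-\epsilon}))$; whereas for $|H| < H_p^\ast$ the function is decreasing at $x = 0^+$, the sum is dominated by small $k$, and one must extract the subleading behavior directly from $a_0 = \E[|X|^{n-1}]/\E[|X|^{n-1}] = 1$, $a_1(H) = \tfrac12 H^2 n^2 \cdot \E[|X|^{n-1}]\,(3^{?})\cdots$ — more precisely, for $p \ge 3$ one checks $a_1(H) \sim \tfrac12 H^2 n^{2-p}(1+o(1))$ so $g(H) = 1 + \tfrac12 H^2 n^{2-p}(1+o(1))$ and hence $\log g(H) = \tfrac12 n^{2-p}H^2(1+O(n^{-\epsilon}))$, while for $p = 2$ the series is geometric-like: $a_k(H) = \binom{2k}{k}^{?}\cdots$ resolves to $g(H) = (1-H^2)^{-1/2}$ exactly (this is the classical spherical $2$-spin computation), giving the stated form.

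The main obstacle I anticipate is the uniformity of the largest-term approximation near the threshold and the control of the error terms $O(n^{-\epsilon})$: one needs the log-concavity of $k \mapsto a_k(H)$ to hold for all $n$ large and all relevant $H$, and one needs the Gamma-function asymptotics for $\E[|X|^{2kp+n-1}]$ to be uniform in $k$ ranging over $[0, Cn]$, not just for fixed $k/n$. Handling the transition window $|H| \in (H_p^\ast - n^{-\epsilon}, H_p^\ast + n^{-\epsilon})$ — which is excluded from the statement — is what forces the $n^{-\epsilon}$ cushion, and making the constants in ``$O(n^{-\epsilon})$'' genuinely uniform requires a careful second-order (Gaussian) expansion of $\log a_k$ around $k^\ast$, i.e.\ checking that the curvature $\partial_x^2$ of the rate function at $x^\ast(H)$ is bounded away from $0$ on compact $H$-intervals inside the dominant regime. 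A secondary technical point is the $p = 2$ case: there the ``$k^\ast = \infty$'' phenomenon for $|H| \to 1$ means the largest-term heuristic degenerates and one should instead recognize the series in closed form via the generating function $\sum_k \binom{2k}{k}(H^2/4)^k \cdot (\text{ratio}) = (1-H^2)^{-1/2}$, which I would verify by the Gamma-ratio identity rather than by the Laplace method.
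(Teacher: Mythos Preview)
Your strategy is essentially the paper's own: expand via Lemma~\ref{integral}, convert the $k$-th term to Gamma functions, apply Stirling, rescale $c = \ell/n$, and do a Laplace analysis on the rate function $g(c)$. The sub-threshold and $p=2$ treatments also match.

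One correction: your characterization of the threshold for $p\ge 3$ as ``the value of $H$ at which the derivative at $x=0^+$ changes sign'' is wrong. For $p\ge 3$ one has $g'(c)=2\log|H|+(p-2)\log(2c)-p\log(2pc+1)$, so $g'(0^+)=-\infty$ for \emph{every} $H$; the boundary term $c=0$ is always a local maximum of $g$. The actual mechanism is a barrier: $g''$ has a single sign change, so as $|H|$ grows a local interior maximum at $c=h_+>0$ first appears (when $g'((p-2)/(4p))>0$, i.e.\ $|H|>\tfrac{p^{p-1}}{2(p-2)^{(p-2)/2}}$) but initially satisfies $g(h_+)<0=g(0)$. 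The threshold $H_p^*$ is the value of $|H|$ at which $g(h_+)=0$, after which the interior maximum wins and the dominant $\ell$ jumps discontinuously from $O(1)$ to $h_+ n$. This barrier picture is also what makes the $n^{-\epsilon}$ cushion work cleanly: away from $H_p^*$ the gap $|g(h_+)|$ is bounded below, so the competing regions are exponentially separated. For $p=2$ your description is correct since there $g'(0)=2\log|H|$ and the transition is continuous.
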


Also, we show that the terms in Taylor expansion are concentrated in a small region.
\begin{lem}\label{phase3}For $p \ge 3$ and
\(
\widehat{H}= H\sigma_1...\sigma_p n^{-\tfrac{p-2}{2}},
\)
we have the following phase transition.
There is a threshold $H_p^*$ such that:
\begin{itemize}
\item If $|H| < H_p^*-n^{-\epsilon}$, then
\[
\mathbb{E}\bigl[\exp(\widehat{H})\bigr]
=
1+\frac{1}{2}n^{2-p}H^{2}
\bigl[1 + O(n^{-\epsilon})\bigr].
\]
\item If $|H| > H_p^*+n^{-\epsilon}$, then
\[
\mathbb{E}\bigl[\exp(\widehat{H})\bigr]
=
\sum_{\ell =\lambda_p(H)\,n(1-n^{-\epsilon})}^{\ell =\lambda_p(H)\,n(1+n^{-\epsilon})}
\frac{\mathbb{E}\bigl[\widehat{H}^{2\ell}\bigr]}{(2\ell)!}
\;\bigl[\,1 + O(n^{-\epsilon})\bigr].
\]
\end{itemize}
\end{lem}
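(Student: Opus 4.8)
The plan is to expand the exponential into a series in the even moments of the single monomial $\sigma_1\cdots\sigma_p$, recognise this as a one‑parameter Laplace‑type sum, and pin down the window that carries its mass. Because the uniform law on $S_n$ is invariant under each reflection $\sigma_i\mapsto-\sigma_i$, the odd moments $\mathbb{E}[\widehat H^{\,2k+1}]$ vanish, so
\[
\mathbb{E}\bigl[\exp(\widehat H)\bigr]=\sum_{\ell\ge 0}a_\ell,\qquad a_\ell\deq\frac{\mathbb{E}[\widehat H^{\,2\ell}]}{(2\ell)!}=\frac{H^{2\ell}n^{2\ell}}{(2\ell)!}\cdot\frac{\mathbb{E}[|X|^{n-1}]\bigl(\mathbb{E}[X^{2\ell}]\bigr)^{p}}{\mathbb{E}[|X|^{2p\ell+n-1}]},\qquad X\sim N(0,1),
\]
the second equality being Lemma~\ref{integral} with $i_1=\dots=i_p=2\ell$. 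Since $|\widehat H|\le |H|n$ pointwise on $S_n$, one has $a_\ell\le (|H|n)^{2\ell}/(2\ell)!$, so the range $\ell\gtrsim n$ contributes negligibly and the whole sum is governed by $\ell=O(n)$.

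Next I would analyse $(a_\ell)$ through its consecutive ratios. Using $\mathbb{E}[X^{2\ell+2}]=(2\ell+1)\mathbb{E}[X^{2\ell}]$, $\mathbb{E}[|X|^{m+2}]=(m+1)\mathbb{E}[|X|^{m}]$ and $\Gamma(x+p)/\Gamma(x)=x^{p}(1+O(1/x))$,
\[
\frac{a_{\ell+1}}{a_\ell}=\frac{H^2n^2(2\ell+1)^{p-1}}{(2\ell+2)\prod_{j=0}^{p-1}(2p\ell+n+2j)}=r\!\left(\tfrac{\ell}{n}\right)\bigl(1+O(1/n)\bigr),\qquad r(c)\deq\frac{2^{\,p-2}H^2c^{\,p-2}}{(2pc+1)^{p}},
\]
uniformly for $\ell\in[\delta n,\delta^{-1}n]$ (any fixed $\delta>0$), with even simpler estimates for $\ell=O(1)$ and $\ell\gg n$. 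The function $r$ vanishes at $0$ and $\infty$ and has a single interior maximum at $c^{*}=(p-2)/(4p)$, with $r(c^{*})=4H^{2}(p-2)^{p-2}p^{-(2p-2)}$; once $H^{2}>p^{2p-2}/(4(p-2)^{p-2})$ the equation $r(c)=1$ has two roots $c_-<c^{*}<c_+$, and, writing $\lambda_p(H)\deq c_+$ and $\phi(c)\deq\int_0^{c}\log r(s)\,ds$, one has $\log a_{\lfloor cn\rfloor}=n\phi(c)+O(\log n)$ together with $\phi'=\log r$ and $\partial_{|H|}\phi(c_+)=2c_+/|H|>0$ (as $\phi'(c_+)=0$); hence $f_p(H)\deq\phi(\lambda_p(H))$ is strictly increasing in $|H|$, from negative values up to $+\infty$, and the threshold $H_p^{*}$ of Definition~\ref{def:threshold} is exactly the value at which $f_p(H)=0$. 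This is the very computation behind Lemma~\ref{exp_basic}, which I would simply reuse.

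The two regimes then follow from Laplace's method applied to $\sum_\ell a_\ell$. If $|H|<H_p^{*}-n^{-\epsilon}$, then $f_p(|H|)\le-c\,n^{-\epsilon}$ by strict monotonicity, so any bulk hump of $(a_\ell)$ has height $\le e^{nf_p(|H|)}\le e^{-c\,n^{1-\epsilon}}$; since moreover the small‑$\ell$ ratio $a_{\ell+1}/a_\ell=\tfrac{H^2n^{2-p}(2\ell+1)^{p-1}}{2\ell+2}(1+o(1))$ is $o(1)$ for $\ell=O(1)$ and bounded away from $1$ for $\ell\le(1-\eta)c_-n$, the sequence decays geometrically to super‑polynomially small size before any bulk region, giving $\sum_{\ell\ge2}a_\ell=O(a_2)=O(n^{4-2p})$; combined with $a_1=\tfrac12 H^2n^{2-p}\,\mathbb{E}[(\sigma_1\cdots\sigma_p)^2]=\tfrac12 H^2n^{2-p}(1+O(1/n))$ this yields $\mathbb{E}[\exp(\widehat H)]=1+\tfrac12 n^{2-p}H^2(1+O(n^{-\epsilon}))$ (any $\epsilon\le p-2$). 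If instead $|H|>H_p^{*}+n^{-\epsilon}$ with $\epsilon<\tfrac12$, then $f_p(|H|)\ge c\,n^{-\epsilon}$, so $a_{\lfloor\lambda_p(H)n\rfloor}\ge e^{c\,n^{1-\epsilon}}\gg1=a_0$; expanding $\phi$ to second order at $\lambda_p(H)$ — where $\phi'=0$ and $\phi''(\lambda_p(H))=r'(\lambda_p(H))<0$ — gives $a_\ell\asymp a_{\lfloor\lambda_p(H)n\rfloor}\exp\!\bigl(-c\,(\ell-\lambda_p(H)n)^{2}/n\bigr)$, a discrete Gaussian of width $\Theta(\sqrt n)\ll\lambda_p(H)n^{1-\epsilon}$. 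Hence the window $[\lambda_p(H)n(1-n^{-\epsilon}),\lambda_p(H)n(1+n^{-\epsilon})]$ carries all but an $e^{-c\,n^{1-2\epsilon}}$‑fraction of the peak, while the leftover — the decreasing stretch $\ell\le c_-n$, where $a_\ell\le a_0=1$ so the total is $O(n)$, together with the off‑peak tail $\ell>\lambda_p(H)n(1+n^{-\epsilon})$, of total $\le n\,a_{\mathrm{peak}}e^{-c\,n^{1-2\epsilon}}$ — is $O(n^{-\epsilon})$ times the window sum. That is the second display.

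The main obstacle is making the ratio estimate genuinely uniform across the scales $\ell=O(1)$, $\ell=\Theta(\sqrt n)$, $\ell=\Theta(n)$ and $\ell\gg n$, so that the finitely many $\ell$ where $r(\ell/n)$ is within $O(1/n)$ of $1$ (near $c_-,c^{*},c_+$) cannot disturb the up/down shape of $(a_\ell)$, and — in the supercritical case — checking that the subdominant local minimum at $c_-n$ and the polynomially many terms below it are overwhelmed by the exponentially tall peak $e^{nf_p(H)}$. Both rest on the Gamma‑ratio asymptotics above and the strict negativity of $\phi''(\lambda_p(H))$; the remainder is routine bookkeeping.
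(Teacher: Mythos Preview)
Your proposal is correct and follows essentially the same route as the paper's proof in Appendix~\ref{app:proof}: both compute the series $\sum_\ell a_\ell$ via Lemma~\ref{integral}, reduce to a one-parameter Laplace sum, and locate the mass through the profile $g(c)$ (your $\phi$ satisfies $\phi'=\log r=g'$, so $\phi=g$). The only cosmetic difference is that the paper applies Stirling to $\log a_\ell$ directly and Taylor-expands $f(cn)=ng(c)+\square$ around the maximiser, whereas you work through the consecutive ratio $a_{\ell+1}/a_\ell\approx r(\ell/n)$ and invoke Laplace's method; the identification of the peak location $\lambda_p(H)=c_+$, the threshold $H_p^*$ via $g(c_+)=0$, and the Gaussian window of width $\Theta(\sqrt n)$ are identical in both arguments.
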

For the case of $p=2$, we have a slightly different phase transition.
\begin{lem}\label{phase2}
For $p=2$, the behavior is different and we see another phase transition:
\begin{itemize}
\item For $|H| < 1$, 
\[
\mathbb{E}\bigl[\exp(\widehat{H})\bigr]
\;=\;
\sum_{\ell=0}^{n^\epsilon}
\frac{\mathbb{E}\bigl[\widehat{H}^{2\ell}\bigr]}{(2\ell)!}\,
\;\Bigl[\,1 + O(n^{-\epsilon})\Bigr].
\]
\item For $|H| > 1+n^{-\epsilon}$,
\[
\mathbb{E}\bigl[\exp(\widehat{H})\bigr]
\;=\;
\sum_{\tfrac{H-1}{4}n(1-n^{-\epsilon})}^{\tfrac{H-1}{4}n(1+n^{-\epsilon})}
\;\frac{\mathbb{E}\bigl[\widehat{H}^{2\ell}\bigr]}{(2\ell)!}
\;\Bigl[\,1 + O(n^{-\epsilon})\Bigr].
\]
\end{itemize}
\end{lem}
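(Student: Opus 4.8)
\textbf{Plan of proof for Lemma~\ref{phase2}.}

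The plan is to mimic the argument for the $p\ge 3$ case (Lemma~\ref{phase3}) but to track the extra combinatorial factor that appears because $p=2$ monomials $\sigma_i\sigma_j$ do not carry the $n^{-(p-2)/2}=n^0$ suppression. First I would use Lemma~\ref{integral} to write, for $\widehat H=H\sigma_1\sigma_2$,
\[
\E\bigl[\exp(\widehat H)\bigr]=\sum_{\ell\ge 0}\frac{H^{2\ell}}{(2\ell)!}\,\E[\sigma_1^{2\ell}\sigma_2^{2\ell}]
=\sum_{\ell\ge 0}\frac{H^{2\ell}}{(2\ell)!}\,n^{2\ell}\frac{\E[|X|^{n-1}]\,\bigl(\E[X^{2\ell}]\bigr)^2}{\E[|X|^{4\ell+n-1}]},
\]
the odd terms vanishing by symmetry. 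Writing $a_\ell$ for the $\ell$-th summand, the key is to understand the ratio $a_{\ell+1}/a_\ell$ as a function of $\ell$ and $n$: using $\E[X^{2\ell}]=(2\ell-1)!!$ and Stirling-type asymptotics for the Gaussian absolute moments $\E[|X|^{s}]$, one finds that $\log(a_{\ell+1}/a_\ell)$ is, to leading order, an \emph{increasing-then-decreasing}-free, i.e. monotone-decreasing, function of $\ell$ in the relevant range, so the sequence $(a_\ell)$ is log-concave and has a unique maximizer $\ell^\ast=\ell^\ast(H,n)$. Solving $a_{\ell+1}/a_\ell=1$ asymptotically is the computational heart: one expands the moment ratio and finds $\ell^\ast=0$ (boundary maximum) when $|H|<1$, and $\ell^\ast\sim\frac{|H|-1}{4}\,n$ when $|H|>1$, which is exactly the location claimed in the statement. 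This is where the threshold value $1$ (rather than the $H_p^\ast$ of the $p\ge3$ case) emerges, and it should be cross-checked for consistency with Lemma~\ref{phase2}'s subthreshold formula $(1-H^2)^{-1/2}$ in Lemma~\ref{exp_basic}, since $\sum_\ell \binom{2\ell}{\ell}(H/2)^{2\ell}$-type sums reproduce that generating function.

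Next, having located $\ell^\ast$, I would prove the concentration statement: that the full sum is captured, up to a $(1+O(n^{-\epsilon}))$ factor, by the indices within $n^{-\epsilon}$ (relative) of $\ell^\ast$. For $|H|>1+n^{-\epsilon}$ this follows from log-concavity plus a quantitative second-order expansion of $\log a_\ell$ around $\ell^\ast$: one shows $\log a_\ell-\log a_{\ell^\ast}\le -c\,(\ell-\ell^\ast)^2/n$ for some $c>0$ throughout a macroscopic window, and crude tail bounds (e.g.\ comparing $a_\ell$ to a geometric series) outside it, so that summing the Gaussian-type profile gives the stated truncation with the $n^{-\epsilon}$ error. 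For $|H|<1$ the maximizer sits at the boundary $\ell=0$, the ratios $a_{\ell+1}/a_\ell$ are bounded away from $1$, and one gets geometric decay immediately, so truncating at $\ell=n^{\epsilon}$ loses only $O(n^{-\epsilon})$; this is the easy half. A little care is needed near $|H|$ slightly below $1$ so that the ``rate'' of geometric decay does not degenerate — this is exactly where the $n^{-\epsilon}$ buffer in the hypothesis $|H|<1$ (as opposed to $|H|<1-n^{-\epsilon}$) must be handled, likely by allowing the cutoff $n^\epsilon$ to grow and absorbing the boundary layer into the error term.

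The main obstacle I expect is the uniform control of the Gaussian absolute moment ratio $\E[|X|^{n-1}]\,\E[|X|^{2\ell}]^2/\E[|X|^{4\ell+n-1}]$ across the whole range $0\le \ell\lesssim n$, since $4\ell+n-1$ ranges from $n-1$ to order $n$ and naive Stirling bounds for $\Gamma$ at half-integer arguments are not tight enough near the transition; one needs the next-order term in $\log\Gamma$ (the $\tfrac1{12z}$ correction or an integral remainder) to see that the error in locating $\ell^\ast$ is genuinely $O(n^{1-\epsilon})$ and not $O(n)$. I would handle this by writing everything through $\log\Gamma$ and using the precise inequalities $\tfrac{1}{12z+1}<\log\Gamma(z)-\bigl((z-\tfrac12)\log z-z+\tfrac12\log 2\pi\bigr)<\tfrac1{12z}$, which gives the needed uniformity. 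Once the moment asymptotics are pinned down, both bullets follow by the log-concavity/second-derivative argument sketched above; the structure is entirely parallel to Lemmas~\ref{exp_basic} and~\ref{phase3}, only the threshold constant and the size of $\ell^\ast$ differ, so most of the estimates in Appendix~\ref{app:proof} can be reused.
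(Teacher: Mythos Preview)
Your proposal is correct and follows essentially the same approach as the paper: expand via Lemma~\ref{integral}, use Stirling to analyze the log of the $\ell$-th term, locate the maximizer (boundary for $|H|<1$, interior at $\ell^\ast\sim\tfrac{H-1}{4}n$ for $|H|>1$) by solving the first-order condition, and then obtain the window concentration from a quadratic Taylor expansion around $\ell^\ast$. The paper phrases this through the scaled function $g(c)=2c\log H+2c-\tfrac{4c+1}{2}\log(4c+1)$ and its derivatives rather than through ratios $a_{\ell+1}/a_\ell$ and log-concavity, but these are the same computation; your cross-check against the generating function $(1-H^2)^{-1/2}$ and your concern about the degenerating rate near $|H|\uparrow 1$ are both apt and handled (or glossed over) in the paper in the same way.
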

We define $\lambda_p,H_p^*,f_p$ from the above lemmas.
\begin{defn}\label{def:threshold}
    We define $\lambda_p(H),H_p^*$ as the concentration location of the Taylor expansion and the expectation phase transition threshold as the lemmas above. Especially $\lambda_2(H)=\frac{H-1}{4}$ and $H_2^*=1$. For $p\geq 3$,
    $\lambda_p(H)$ is defined when $|H|>\frac{p^{p-1}}{2(p-2)^{(p-2)/2}}$ and the larger solution of the equation
    \[
        2\log(H)+(p-2)\log(2\lambda_p(H))-p\log(2p\lambda_p(H)+1)=0.
    \]
    $H_p^*$ is the solution of the equation
    \begin{align*}
    g(p,\lambda_p(H),H)=0
\end{align*}
where 
\[
    g(p,c,H)=2c\log( H)-2c\log(2c)+2c+pc\log 2c -\frac{2pc+1}{2}\log(2pc+1).
\]
Furthermore, for all $p\geq 2$ and $|H|>H_p^*$,
\[
    f_p(H)= g(p,\lambda_p(H),H).
\]
We also have \[
f_p(H)=2\lambda_p(H)-\frac{1}{2}\log (1+2p \lambda_p(H)).
\]
For $|H|<H_p^*$, we define $f_p(H)=0.$
\end{defn}
\begin{rem}
    Using simple calculation, we can check that $H_p^*>\frac{p^{p-1}}{2(p-2)^{(p-2)/2}}$.
\end{rem}
We need to analyze this function more thoroughly to understand its behavior.
Also, for the above threshold case of overlap calculation, we need odd power term expansion. 
\begin{lem}
    For the above threshold case ($|H|>H^*_p$), odd power sum and even power sum are almost the same, and also the odd power expansions are also concentrated.
    \[\sum_{\,\ell \;=\;\lambda_p(H)\,n(1-n^{-\epsilon})}^{\,\ell \;=\;\lambda_p(H)\,n(1+n^{-\epsilon})}
\frac{\mathbb{E}\bigl[|\widehat{H}|^{2\ell+1}\bigr]}{(2\ell+1)!}
\;=\sum_{\,\ell \;=\;\lambda_p(H)\,n(1-n^{-\epsilon})}^{\,\ell \;=\;\lambda_p(H)\,n(1+n^{-\epsilon})}
\frac{\mathbb{E}\bigl[(\widehat{H})^{2\ell}\bigr]}{(2\ell)!}
\;\bigl[\,1 + O(n^{-\epsilon})\bigr],\]
and
\[
    \sum_{\ell=0}^{\infty}
\frac{\mathbb{E}\bigl[|\widehat{H}|^{2\ell+1}\bigr]}{(2\ell+1)!}
\;=\sum_{\,\ell \;=\;\lambda_p(H)\,n(1-n^{-\epsilon})}^{\,\ell \;=\;\lambda_p(H)\,n(1+n^{-\epsilon})}
\frac{\mathbb{E}\bigl[|\widehat{H}|^{2\ell+1}\bigr]}{(2\ell+1)!}
\;\bigl[\,1 + O(n^{-\epsilon})\bigr].
\]
\end{lem}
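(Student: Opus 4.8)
The plan is to reduce both displayed identities to a single uniform estimate on the ratio of consecutive Taylor coefficients. Write $\widehat H=H\sigma_1\cdots\sigma_p n^{-(p-2)/2}$ and, for $m\ge 0$, put $a_m=\mathbb{E}[|\widehat H|^{m}]/m!$; note that $a_{2\ell}=\mathbb{E}[\widehat H^{2\ell}]/(2\ell)!$, so both sides of the first identity are just sums of the $a$'s over the window $W=\{\ell:\lambda_p(H)n(1-n^{-\epsilon})\le\ell\le\lambda_p(H)n(1+n^{-\epsilon})\}$. Applying Lemma~\ref{integral} with $i_1=\cdots=i_p=m$ gives
\[
\mathbb{E}[|\widehat H|^{m}]=|H|^{m}n^{m}\,\frac{\mathbb{E}[|X|^{\,n-1}]\,\mathbb{E}[|X|^{\,m}]^{p}}{\mathbb{E}[|X|^{\,pm+n-1}]},\qquad X\sim N(0,1),
\]
hence
\[
\frac{a_{2\ell+1}}{a_{2\ell}}=\frac{|H|n}{2\ell+1}\left(\frac{\mathbb{E}[|X|^{\,2\ell+1}]}{\mathbb{E}[|X|^{\,2\ell}]}\right)^{\!p}\frac{\mathbb{E}[|X|^{\,2p\ell+n-1}]}{\mathbb{E}[|X|^{\,2p\ell+p+n-1}]}.
\]

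Next I would substitute $\mathbb{E}[|X|^{j}]=2^{j/2}\Gamma\!\big(\tfrac{j+1}{2}\big)/\sqrt{\pi}$ and use the ratio asymptotic $\Gamma(x+a)/\Gamma(x+b)=x^{a-b}\bigl(1+O(1/x)\bigr)$. For $\ell\in W$ one has $\ell\asymp n$, so $\mathbb{E}[|X|^{2\ell+1}]/\mathbb{E}[|X|^{2\ell}]=(2\ell)^{1/2}(1+O(1/n))$ and $\mathbb{E}[|X|^{2p\ell+n-1}]/\mathbb{E}[|X|^{2p\ell+p+n-1}]=(2p\ell+n)^{-p/2}(1+O(1/n))$; with $c'=\ell/n$ everything collapses to
\[
\frac{a_{2\ell+1}}{a_{2\ell}}=\phi(c')\bigl(1+O(1/n)\bigr),\qquad \phi(c)=\frac{|H|}{2c}\left(\frac{2c}{2pc+1}\right)^{\!p/2}.
\]
The crucial observation is that $\phi(c)^{2}=H^{2}(2c)^{p-2}(2pc+1)^{-p}$, so $\phi(c)=1$ is \emph{exactly} the defining equation of $\lambda_p(H)$ from Definition~\ref{def:threshold} (for $p=2$ one checks directly $\phi((H-1)/4)=|H|/(4\cdot\tfrac{H-1}{4}+1)=1$). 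Since $\phi$ is $C^{1}$ on a neighbourhood of $\lambda_p(H)$—this requires only $\lambda_p(H)>0$ and $2p\lambda_p(H)+1>0$, both of which hold when $|H|>H_p^{*}$—and since $\ell\in W$ forces $c'=\lambda_p(H)+O(n^{-\epsilon})$, we obtain $\phi(c')=1+O(n^{-\epsilon})$, and therefore
\[
\frac{a_{2\ell+1}}{a_{2\ell}}=1+O(n^{-\epsilon})\qquad\text{uniformly for }\ell\in W.
\]

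The first identity follows at once, since $\bigl|\sum_{\ell\in W}a_{2\ell+1}-\sum_{\ell\in W}a_{2\ell}\bigr|\le\bigl(\max_{\ell\in W}|a_{2\ell+1}/a_{2\ell}-1|\bigr)\sum_{\ell\in W}a_{2\ell}=O(n^{-\epsilon})\sum_{\ell\in W}a_{2\ell}$. For the second identity I would avoid a term-by-term tail bound: note $\sum_{\ell\ge0}a_{2\ell}=\mathbb{E}[\cosh\widehat H]$ (the odd signed moments vanish because $\mathbb{E}[X^{2\ell+1}]=0$, so this also equals $\mathbb{E}[\exp(\widehat H)]$), while $\sum_{\ell\ge0}a_{2\ell+1}=\mathbb{E}\bigl[\sinh|\widehat H|\bigr]=\mathbb{E}\bigl[|\sinh\widehat H|\bigr]$, and the elementary identity $\cosh x-|\sinh x|=e^{-|x|}\in(0,1]$ gives $0\le\sum_{\ell\ge0}a_{2\ell}-\sum_{\ell\ge0}a_{2\ell+1}\le 1$. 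By Lemma~\ref{exp_basic}, $\mathbb{E}[\exp(\widehat H)]=\exp\bigl(nf_p(H)(1+o(1))\bigr)$ grows exponentially in $n$ when $|H|>H_p^{*}$, so $\sum_{\ell\ge0}a_{2\ell+1}=\bigl(\sum_{\ell\ge0}a_{2\ell}\bigr)\bigl(1+O(e^{-cn})\bigr)$. Combining this with Lemma~\ref{phase3}, which states $\sum_{\ell\ge0}a_{2\ell}=\bigl(\sum_{\ell\in W}a_{2\ell}\bigr)(1+O(n^{-\epsilon}))$ (use Lemma~\ref{phase2} when $p=2$), and with the first identity already established, yields $\sum_{\ell\ge0}a_{2\ell+1}=\bigl(\sum_{\ell\in W}a_{2\ell+1}\bigr)(1+O(n^{-\epsilon}))$, which is the second identity.

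The step I expect to require the most care is the uniform control of the error terms in the Gamma-ratio expansion across the whole window $W$: one must check that every $O(\cdot)$ remains $O(n^{-\epsilon})$ while $\ell$ sweeps an interval of width $\asymp n^{1-\epsilon}$ about $\lambda_p(H)n$, which rests on $\phi$ and $\phi'$ being bounded on a fixed neighbourhood of $\lambda_p(H)$. This is precisely where the hypothesis $|H|>H_p^{*}$ (equivalently, $\lambda_p(H)$ lying on the admissible branch of Definition~\ref{def:threshold}) is used; everything else is routine Stirling bookkeeping together with the $\cosh-|\sinh|\le 1$ trick for the tail.
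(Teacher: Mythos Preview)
Your proof is correct. For the first identity your ratio computation is essentially the paper's argument repackaged: the paper defines a single smooth function $f(\ell)$ with $\exp f(\ell)=a_{2\ell}$ for integer $\ell$ and $\exp f(\ell+\tfrac12)=a_{2\ell+1}$ for half-integers, then uses the mean value theorem to write $a_{2\ell+1}/a_{2\ell}=\exp\bigl(\tfrac12 f'(\xi)\bigr)$ and observes that $f'(\ell)\approx g'(\ell/n)$ vanishes at $\ell/n=\lambda_p(H)$. Your function $\phi$ is exactly $\phi(c)=e^{g'(c)/2}$, so the two arguments coincide; your version is more self-contained and makes explicit that the defining equation of $\lambda_p(H)$ is precisely $\phi(\lambda_p(H))=1$.

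Where you genuinely diverge is the second identity. The paper gets concentration of the odd sum implicitly, by noting that the same Gaussian-window analysis of $f$ applies equally to half-integer arguments. Your $\cosh x-|\sinh x|=e^{-|x|}\le 1$ trick is cleaner: it replaces a second run through the Stirling bookkeeping with a one-line global bound, feeding the already-proved even-sum concentration (Lemmas~\ref{phase3}/\ref{phase2}) and Lemma~\ref{exp_basic} directly back in. This buys simplicity at no cost, since the hypothesis $|H|>H_p^*$ is exactly what makes $\E[\exp(\widehat H)]$ exponentially large and hence the $O(1)$ discrepancy negligible.

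Your stated concern about uniformity of the Gamma-ratio expansion over $W$ is not an issue: the estimate $\Gamma(x+a)/\Gamma(x+b)=x^{a-b}(1+O(1/x))$ is uniform in $x\asymp n$ with $a,b$ bounded, and the passage from $\phi(c')$ to $1+O(n^{-\epsilon})$ is just the Lipschitz bound for $\phi$ near $\lambda_p(H)$.
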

\begin{lem}\label{convexity1}
The function
    \[
    g(z)= z\log Hn-\log\Gamma(z+1)+p\log\mathbb{E}[|X|^{z}]+\log \E[|X|^{n-1}]
    \]
    is convex and
    \[
    g(z)=\log \frac{\E[|\widehat{H}|^z]\E[|X|^{n-1+pz}]}{z!} \text{ for }z\in \N_0.
    \]
\end{lem}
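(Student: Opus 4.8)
The plan is to prove the two assertions separately: first the closed-form identity $g(z)=\log\bigl(\E[|\widehat H|^z]/z!\bigr)$ for integer $z$, which is a direct computation, and then convexity of $g$ on its natural domain, which follows from the convexity of each summand.

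For the identity, I would start from the definition $\widehat H = H\sigma_1\cdots\sigma_p\, n^{-(p-2)/2}$, so that for $z\in\N_0$ we have $|\widehat H|^z = |H|^z n^{-z(p-2)/2}\,|\sigma_1|^z\cdots|\sigma_p|^z$. Apply the second formula of Lemma~\ref{integral} with $k=p$ and $i_1=\cdots=i_p=z$: this gives
\[
\E\bigl[|\sigma_1|^z\cdots|\sigma_p|^z\bigr]
= n^{pz/2}\,\frac{\E[|X|^{n-1}]\,\bigl(\E[|X|^z]\bigr)^p}{\E\bigl[|X|^{pz+n-1}\bigr]}.
\]
Multiplying by $|H|^z n^{-z(p-2)/2}$ collapses the powers of $n$ to $n^{pz/2 - z(p-2)/2} = n^{z}$, so $\E[|\widehat H|^z] = (Hn)^z\,\E[|X|^{n-1}]\,\bigl(\E[|X|^z]\bigr)^p / \E[|X|^{pz+n-1}]$ (absorbing the sign of $H$ into $|H|$, or noting the claimed formula uses $Hn$ with $H>0$ WLOG on the relevant threshold regime). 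Dividing by $z! = \Gamma(z+1)$ and taking logarithms yields exactly
\[
\log\frac{\E[|\widehat H|^z]}{z!}
= z\log(Hn) - \log\Gamma(z+1) + p\log\E[|X|^{z}] + \log\E[|X|^{n-1}] - \log\E[|X|^{pz+n-1}].
\]
This matches $g(z)$ as written in the statement provided one reads the final term $\log\E[|X|^{n-1}]$ as shorthand that should carry the $-\log\E[|X|^{pz+n-1}]$ correction; I would present $g$ with all four (or five) terms explicit so the bookkeeping is transparent, then note the identity holds termwise.

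For convexity, extend $g$ to real $z\ge 0$ (or $z>-1$) using the Gamma function and the formula $\E[|X|^z] = 2^{z/2}\Gamma\!\bigl(\tfrac{z+1}{2}\bigr)/\sqrt{\pi}$ for $X\sim N(0,1)$. Then $z\log(Hn)$ is linear; $-\log\Gamma(z+1)$ is convex because $\log\Gamma$ is convex (Bohr–Mollerup) — wait, that gives $-\log\Gamma$ concave, so the sign matters and I must be careful: the term is $-\log\Gamma(z+1)$, which is \emph{concave}, so convexity of $g$ cannot come termwise in the naive way. The correct route is to combine the Gamma terms: writing everything via $\log\Gamma$, one has $-\log\Gamma(z+1) + p\log\Gamma\!\bigl(\tfrac{z+1}{2}\bigr) - \log\Gamma\!\bigl(\tfrac{pz+n}{2}\bigr) + (\text{const})$, plus a linear-in-$z$ remainder from the powers of $2$ and $\sqrt\pi$. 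Now use that $\psi' = (\log\Gamma)''$ is the trigamma function with $\psi'(x) = \sum_{k\ge 0}(x+k)^{-2}$, so the second derivative of $g$ is
\[
g''(z) = -\psi'(z+1) + \frac{p}{4}\,\psi'\!\Bigl(\frac{z+1}{2}\Bigr) - \frac{p^2}{4}\,\psi'\!\Bigl(\frac{pz+n}{2}\Bigr),
\]
and I would show $g''(z)\ge 0$ using the duplication-type bound $\psi'(x) \le \tfrac12\psi'(x/2)$ together with $\psi'$ decreasing; the term $-\tfrac{p^2}{4}\psi'((pz+n)/2)$ is negative but of size $O(p^2/n)$ for $n$ large and is dominated by the positive gap $\tfrac{p}{4}\psi'(\tfrac{z+1}{2}) - \psi'(z+1) \ge (\tfrac{p}{2}-1)\psi'(z+1) \ge 0$ for $p\ge 2$. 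The main obstacle is precisely this inequality bookkeeping — verifying that the negative trigamma contribution from the normalizing denominator $\E[|X|^{pz+n-1}]$ does not destroy convexity — and handling the small-$z$ / near-boundary behavior of the trigamma function where the bounds are tightest; for $p=2$ the gap degenerates to zero and one needs the strict decrease of $\psi'$ plus the $n$-dependent term's sign to close it, so that case may warrant a separate short argument.
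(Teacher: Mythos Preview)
Your identity computation is correct and rightly flags that the stated $g$ omits the term $-\log\E[|X|^{pz+n-1}]$ needed for the equality $g(z)=\log\bigl(\E[|\widehat H|^z]/z!\bigr)$ to hold exactly. The paper, however, proves convexity of $g$ \emph{as literally written}, i.e.\ without that denominator term; its argument is the duplication identity you allude to, carried out exactly rather than as an inequality. From $\Gamma(2z)=2^{2z-1}\pi^{-1/2}\Gamma(z)\Gamma(z+\tfrac12)$ one differentiates twice to obtain $4\psi'(z+1)=\psi'\bigl(\tfrac{z+1}{2}\bigr)+\psi'\bigl(\tfrac{z}{2}+1\bigr)$, whence
\[
\tfrac12\,\psi'\!\Bigl(\tfrac{z+1}{2}\Bigr)-\psi'(z+1)=\tfrac14\Bigl[\psi'\!\Bigl(\tfrac{z+1}{2}\Bigr)-\psi'\!\Bigl(\tfrac{z}{2}+1\Bigr)\Bigr]>0
\]
by strict decrease of the trigamma function. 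Since $g''(z)=\tfrac{p}{4}\psi'\bigl(\tfrac{z+1}{2}\bigr)-\psi'(z+1)\ge\tfrac12\psi'\bigl(\tfrac{z+1}{2}\bigr)-\psi'(z+1)$ for $p\ge2$, this settles convexity uniformly in $p$ with no case split.

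Your plan to fold the denominator term into $g$ and still obtain convexity has a genuine gap at $p=2$, not merely a bookkeeping nuisance. The extra contribution to $g''$ is $-\psi'\bigl(z+\tfrac{n}{2}\bigr)$, which for $z$ of order $n$ is $\asymp 1/n$, whereas the positive quantity displayed above is only $\asymp 1/z^{2}$; the full expression is therefore \emph{not} convex in that range, and no ``separate short argument'' will rescue it. The resolution is that the denominator term is not needed for the application: in Proposition~\ref{conc_nim} convexity is invoked only on slices where the weighted total $S=\sum a_ih_i+2b+\sum c_il_i+2\sum d_i$ is held fixed, and on such a slice the common factor $\E[|X|^{n-1+2S}]^{-1}$ is constant. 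So work with $g$ exactly as the paper writes it, accept that the displayed identity holds only modulo this $S$-dependent additive constant, and the obstacle you identified disappears.
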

This also ensures convexity for its linear combination as in the following.
\begin{lem}\label{convexity2}
    The function 
    \[
        g(z_1,...,z_k)= \sum_{i=1}^k g(a_iz_i)
    \]
    satisfies convexity.
\end{lem}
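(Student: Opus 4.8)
\textbf{Proof proposal for Lemma~\ref{convexity2}.}
The plan is to deduce the statement directly from the single–variable convexity established in Lemma~\ref{convexity1}, using only two elementary stability properties of convex functions: precomposition with an affine map preserves convexity, and a sum of convex functions in \emph{separated} variables is jointly convex.

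First I would pin down the domain. By Lemma~\ref{convexity1}, $g$ is convex on its natural domain $\{z:z>-1\}$, on which $\Gamma(z+1)$ and $\E[|X|^{z}]$ are finite; note that on $\N_0$ it agrees with $\log\big(\E[|\widehat H|^{z}]/z!\big)$, so $g$ is precisely the convex interpolant of that discrete data. We apply the lemma with coefficients $a_i>0$ and with the $z_i$ ranging over nonnegative reals (in particular nonnegative integers), the only regime in which Lemma~\ref{convexity2} is used, since the $z_i$ are counting variables for the monomials. Then $a_iz_i\ge 0>-1$ lies in the domain of $g$, so $h_i(z_i)\deq g(a_iz_i)$ is well defined, and since $z_i\mapsto a_iz_i$ is linear and $g$ is convex, each $h_i$ is convex in the single variable $z_i$.

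Next, for $\lambda\in[0,1]$ and points $\bsz=(z_1,\dots,z_k)$, $\bsz'=(z_1',\dots,z_k')$ in the domain, convexity of each $h_i$ yields
\[
g\big(\lambda\bsz+(1-\lambda)\bsz'\big)=\sum_{i=1}^{k}h_i\big(\lambda z_i+(1-\lambda)z_i'\big)\le\sum_{i=1}^{k}\big(\lambda h_i(z_i)+(1-\lambda)h_i(z_i')\big)=\lambda\,g(\bsz)+(1-\lambda)\,g(\bsz'),
\]
which is the asserted joint convexity. Equivalently, where $g$ is twice differentiable the Hessian of $\sum_i h_i(z_i)$ is the diagonal matrix $\diag\big(a_i^{2}\,g''(a_iz_i)\big)$, which is positive semidefinite because $g''\ge0$ by Lemma~\ref{convexity1}.

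There is no substantive obstacle: all the analytic content—the log-convexity of $z\mapsto\E[|X|^{z}]$ and the cancellation between $-\log\Gamma(z+1)$ and $p\log\E[|X|^{z}]$ that makes the one-variable $g$ convex—has already been absorbed into Lemma~\ref{convexity1}. The only point requiring mild care is that the affine maps $z_i\mapsto a_iz_i$ keep every argument inside the common domain of $g$, which is automatic for $a_i>0$ and $z_i\ge0$. If in the application a further linear constraint is imposed on the $z_i$ (for instance a cap on the total exponent coming from the spherical constraint), joint convexity is of course inherited on that affine slice.
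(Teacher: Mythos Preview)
Your proposal is correct and is exactly the immediate deduction the paper has in mind: the paper provides no separate proof of Lemma~\ref{convexity2}, treating it as an obvious consequence of Lemma~\ref{convexity1} via affine precomposition and separable summation, which is precisely what you spell out.
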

The above lemma will be used to prove the concentration property to obtain the formula of overlap.
Moreover, we have a simple upper bound for the non-intersecting monomial Hamiltonian.
\begin{lem}\label{decom_1}
    For mutually exclusive $I_t$'s($I_i\cap I_j=\emptyset$ for every pair.), 
    \[
        \E[\sigma_{I_1}^{a_1}\cdots\sigma_{I_t}^{a_t}]\leq \E[\sigma_{I_1}^{a_1}]\cdots\E[\sigma_{I_t}^{a_t}]
    \]
    where $\sigma_I= \prod_{i\in I}\sigma_i$ and $I_i=\{b_{i1},...,b_{ic(i)}\}$.
\end{lem}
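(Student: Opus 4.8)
\textbf{Proof proposal for Lemma \ref{decom_1}.}

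The plan is to reduce the multi-block inequality to a statement purely about moments of a single standard normal variable, using the exact formula of Lemma \ref{integral}, and then to prove the resulting moment inequality by an application of H\"older's inequality (equivalently, log-convexity of absolute moments). First I would write $a = \sum_{t=1}^k c(t)\,a_t$ for the total degree appearing on the left, and note that by Lemma \ref{integral}, since the blocks $I_1,\dots,I_k$ are disjoint, the variables $\sigma_{b_{ij}}$ are all distinct, so
\[
\E\bigl[\sigma_{I_1}^{a_1}\cdots\sigma_{I_k}^{a_k}\bigr]
= \E\Bigl[\prod_{t=1}^k \prod_{j=1}^{c(t)} \sigma_{b_{tj}}^{\,a_t}\Bigr]
= n^{a/2}\,\frac{\E[|X|^{n-1}]\,\prod_{t=1}^k \E[X^{a_t}]^{c(t)}}{\E\bigl[|X|^{\,a+n-1}\bigr]}.
\]
(If some $a_t$ is odd the corresponding factor $\E[X^{a_t}]$ vanishes and both sides are zero, so we may assume every $a_t$ is even and replace $X^{a_t}$ by $|X|^{a_t}$ throughout.) Similarly each single-block factor on the right is $\E[\sigma_{I_t}^{a_t}] = n^{c(t)a_t/2}\,\E[|X|^{n-1}]\,\E[|X|^{a_t}]^{c(t)}/\E[|X|^{\,c(t)a_t+n-1}]$.

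With these substitutions the claimed inequality, after cancelling the common powers of $n$ and the common factors $\E[|X|^{a_t}]^{c(t)}$, becomes the purely analytic statement
\[
\frac{\E[|X|^{n-1}]}{\E[|X|^{\,a+n-1}]}
\;\le\;
\prod_{t=1}^k \frac{\E[|X|^{n-1}]}{\E[|X|^{\,c(t)a_t+n-1}]},
\]
i.e., writing $m_s \deq \E[|X|^{s}]$ and $d_t \deq c(t)a_t$ so that $a = \sum_t d_t$,
\[
m_{n-1}^{\,k-1}\;m_{\,a+n-1}\;\ge\;\prod_{t=1}^k m_{\,d_t+n-1}.
\]
This is exactly a consequence of the log-convexity of $s\mapsto \log m_s = \log\E[|X|^s]$ (H\"older's inequality): for each $t$ one has $d_t + n - 1 = \theta_t (a+n-1) + (1-\theta_t)(n-1)$ with $\theta_t = d_t/a \in [0,1]$, hence $m_{d_t+n-1} \le m_{a+n-1}^{\theta_t}\,m_{n-1}^{1-\theta_t}$; multiplying these over $t$ and using $\sum_t \theta_t = 1$ and $\sum_t (1-\theta_t) = k-1$ gives precisely the displayed bound. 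I would then substitute back to recover the original inequality.

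The only genuine subtlety — and the step I would treat most carefully — is the bookkeeping at the start: one must check that the hypothesis $I_i \cap I_j = \emptyset$ is exactly what licenses applying Lemma \ref{integral} with all exponents attached to \emph{distinct} coordinates (so that the product over $j \in I_t$ really contributes $\E[X^{a_t}]^{c(t)}$ and not a single higher moment), and one must handle the parity/vanishing case cleanly so that the H\"older step is applied only to the honest absolute moments $m_s$, which are finite for all $s \ge 0$ since $X \sim N(0,1)$. Once that reduction is in place the inequality is immediate from log-convexity, so there is no real analytic obstacle beyond this.
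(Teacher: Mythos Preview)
Your proposal is correct and follows essentially the same route as the paper's proof: apply Lemma~\ref{integral} to both sides, cancel the common factors, and reduce to the moment inequality $m_{n-1}^{k-1}\,m_{a+n-1}\ge\prod_t m_{d_t+n-1}$. The paper simply asserts this last inequality without justification, whereas you supply the log-convexity/H\"older argument and the parity bookkeeping, so your write-up is in fact more complete than the original.
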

\begin{lem}\label{decompose}
    For $\sigma_I$, if each pair of $I, J\in \mathcal{I}$ is nonintersect, then the inequality below holds \[
        \E[\exp(\sum_{I\in \mathcal{I}} H_I\sigma_I)]\leq \prod_{I\in \mathcal{I}} \E[\exp[H_I\sigma_I]]
    \]
\end{lem}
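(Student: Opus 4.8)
The plan is to prove Lemma~\ref{decompose} by combining Lemma~\ref{decom_1} with a Taylor-expansion / nonnegativity argument. First I would write
\[
\E\Bigl[\exp\Bigl(\sum_{I\in\mathcal{I}}H_I\sigma_I\Bigr)\Bigr]
=\E\Bigl[\prod_{I\in\mathcal{I}}\exp(H_I\sigma_I)\Bigr]
=\sum_{(a_I)_{I\in\mathcal{I}}}\Bigl(\prod_{I\in\mathcal{I}}\frac{H_I^{a_I}}{a_I!}\Bigr)\,\E\Bigl[\prod_{I\in\mathcal{I}}\sigma_I^{\,a_I}\Bigr],
\]
where the sum runs over all finitely supported tuples of nonnegative integers $a_I$. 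The idea is that each monomial expectation $\E[\prod_I\sigma_I^{a_I}]$ factorizes over the (disjoint) index blocks after applying Lemma~\ref{decom_1}; once it factorizes, I can re-sum the series back into a product of single-monomial exponential integrals, which is exactly the right-hand side.

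The key subtlety is signs: Lemma~\ref{decom_1} is an inequality $\E[\sigma_{I_1}^{a_1}\cdots\sigma_{I_t}^{a_t}]\le\E[\sigma_{I_1}^{a_1}]\cdots\E[\sigma_{I_t}^{a_t}]$, so to pass from a term-by-term inequality on the series to an inequality on the sums I need every coefficient $\prod_I H_I^{a_I}/a_I!$ multiplying it to be nonnegative, and I need the factorized upper bound itself to be nonnegative so that re-summation preserves the direction. The clean way to handle this is to note that by symmetry of the uniform measure on the sphere under $\sigma_i\mapsto-\sigma_i$, any monomial expectation $\E[\prod_I\sigma_I^{a_I}]$ vanishes unless every coordinate appears to an even total power; since the $I$ are disjoint, this forces each individual $a_I$ to be even. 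Restricting the sum to even exponents $a_I=2b_I$ makes every surviving coefficient $\prod_I H_I^{2b_I}/(2b_I)!\ge0$, and makes every factor $\E[\sigma_I^{2b_I}]\ge0$, so I can apply Lemma~\ref{decom_1} term by term:
\[
\sum_{(b_I)}\Bigl(\prod_I\frac{H_I^{2b_I}}{(2b_I)!}\Bigr)\E\Bigl[\prod_I\sigma_I^{2b_I}\Bigr]
\le\sum_{(b_I)}\prod_I\frac{H_I^{2b_I}}{(2b_I)!}\E[\sigma_I^{2b_I}]
=\prod_{I\in\mathcal{I}}\Bigl(\sum_{b_I\ge0}\frac{H_I^{2b_I}}{(2b_I)!}\E[\sigma_I^{2b_I}]\Bigr)
=\prod_{I\in\mathcal{I}}\E[\exp(H_I\sigma_I)],
\]
where the last equality again uses the vanishing of odd-power expectations of a single $\sigma_I$. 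Interchanging sum and expectation is justified because all terms are nonnegative (Tonelli), and finiteness of the right-hand side follows from Lemma~\ref{exp_basic} (each single-monomial exponential moment is finite).

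The main obstacle I anticipate is the bookkeeping needed to make the term-by-term comparison fully rigorous when $\mathcal{I}$ is large: one must check that the multi-index series converges absolutely before rearranging, and that Lemma~\ref{decom_1} as stated (for a finite disjoint family) applies uniformly to every finite sub-tuple appearing in the expansion. Both are routine once the even-exponent reduction is in place, since the reduction turns everything into a sum of nonnegative terms dominated by the finite product $\prod_I\E[\exp(H_I\sigma_I)]$. A secondary point to be careful about is that $\mathcal{I}$ may itself be infinite (all monomials up to $p\le M\log n$); but for fixed $n$ only finitely many couplings $H_I$ are nonzero in the relevant decomposition, so we may treat $\mathcal{I}$ as finite without loss of generality, or alternatively take an increasing limit of finite subfamilies and pass to the limit using monotone convergence.
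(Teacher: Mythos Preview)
Your proposal is correct and follows essentially the same route as the paper's proof: Taylor-expand, apply Lemma~\ref{decom_1} term by term, and re-sum into the product. The paper's argument is terser and does not spell out the parity reduction or the Tonelli justification, but your even-exponent observation is exactly what makes the term-by-term inequality legitimate (odd tuples contribute zero on both sides), so you have simply filled in the details the paper leaves implicit.
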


\section{Energy landscape of Non-Intersecting Monomial (NIM) model}
In this section, we investigate the energy landscape of the \emph{NIM} model which has $n^{\epsilon_0}$ interactions for each $2\leq p\leq M\log n$ and each of the corresponding monomials are not intersecting.
\begin{defn}
    NIM model is the mixed $p$-spin spherical model with Hamiltonian
    \[
    H_n(\sigma)= \sum_{2\leq p\leq M\log n} H_{n,p},  \quad H_{n,p}= \sum_{i<n^{\epsilon_0}} H_{i,p}\sigma_{i1}...\sigma_{ip}n^{-(p-2)/2} 
    \] where $H_{i,p}$ are values and $\sigma_{ik}$ are non-intersecting spins in $\{\sigma_1,...,\sigma_n\}$.
\end{defn}
In this section, we deal with their energy landscape case by case. 
For their coefficient scale, we deal with three possible regimes. First one is no dominance (\emph{ND}) regime which is all interactions are under the threshold. Second one is multiple comparable dominance (\emph{MD}) which is few multiple dominant terms are comparable as poly($n$) scale difference and others are exponentially small $\exp(n^{\epsilon
})$. Remaining part is single dominant regime (\emph{SD}) which is single dominant term is exponentially larger than others. In Section 3.1, we deal with ND case. In Section 3.2, we deal with free energy and its concentration property for MD regime. In Section 3.3 we deal with energy landscape for SD regime. We only require SD and ND for heavy tail extension. 
\subsection{All under the threshold}
In this subsection, we prove the following propositions about the free energy and energy landscape of the NIM model when every interaction is below the threshold.
For convenience, we first rearrange the spin variables in a more manageable manner.
Let 
\[
  K_i:\text{ (2-spin coefficient)}, 
  \quad 
  H_i:\text{ ($l_i$-spin coefficient with }l_i \ge 3).
\]
More precisely, we define its Hamiltonian as\[
H_n(\sigma)= \sum_{i=1}^k \widehat{K}_i+\sum_{i=1}^h \widehat{H}_i
\] where
\[
  \widehat{K}_i = K_i\sigma_{i_1}\sigma_{i_2},
  \quad\text{and}\quad
  \widehat{H}_i=H_i\sigma_{i_1}\sigma_{i_2}\cdots\sigma_{i_{l_i}}
  n^{-\tfrac{l_i-2}{2}}
\]
which of each $\sigma_{i_j}$ are mutually different spin variables in $\{\sigma_1,...,\sigma_n\}$.
\begin{prop}[Free energy of NIM model]\label{under_free}
    For the NIM model, if all the interactions are below the threshold $H_p^*$ in Definition \ref{def:threshold}, we have below the sampled free energy. Let $\beta(p)$ be the smallest existing p-interaction.
    If $\beta(p)=2$,
    \[
        \log Z_n= \log \prod_{\widehat{H}_i:2-\text{spin}} (1-H^2_i)^{-1/2}(1+O(n^{-\epsilon})).
    \]
    Otherwise,
    \[
        \log Z_n=\frac{1}{2}n^{2-\beta(p)}(1+O(n^{-\epsilon}))\sum_{\widehat{H}_i:\beta(p)-\text{spin}} H_i^2.
    \]
\end{prop}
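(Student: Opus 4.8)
\textbf{Proof proposal for Proposition~\ref{under_free}.}
The plan is to exploit the non-intersecting structure together with the single-monomial estimates already in hand. By Lemma~\ref{decompose}, since all the monomials $\sigma_{i_1}\cdots\sigma_{i_{l_i}}$ are pairwise disjoint in their spin variables, we have the upper bound $Z_n = \E[\exp(\beta H_n(\sigma))] \le \prod_{i=1}^k \E[\exp(\widehat K_i)] \cdot \prod_{i=1}^h \E[\exp(\widehat H_i)]$. For the matching lower bound I would use a complementary correlation inequality: disjointness of supports means the monomials are ``almost independent'' under the uniform sphere measure, and Lemma~\ref{integral} gives the exact joint moments as a ratio of $\E[|X|^{\,\cdot}]$'s; expanding $\prod_i \exp(\widehat H_i)$ and comparing the joint moment $\E[\prod \sigma_{I_i}^{a_i}]$ with the product $\prod \E[\sigma_{I_i}^{a_i}]$ term by term, the discrepancy is governed by the ratio $\E[|X|^{n-1}]\E[|X|^{\sum a_i + \text{small}}] / \E[|X|^{\sum a_i + n -1}]$ versus its factored analogue, which is $1+O(n^{-\epsilon})$ on the relevant range because the total degree $\sum a_i$ appearing with non-negligible weight is $o(n)$ in the sub-threshold regime (here I invoke Lemmas~\ref{phase3} and~\ref{phase2}, which confine the effective Taylor index to $\ell \le n^\epsilon$ for $p=2$ and to $\ell \le \tfrac12 n^{2-p}H^2 \cdot n^{\epsilon}$ for $p\ge 3$). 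This two-sided comparison yields $\log Z_n = \sum_{i=1}^k \log \E[\exp(\widehat K_i)] + \sum_{i=1}^h \log \E[\exp(\widehat H_i)] + O(n^{-\epsilon})$, modulo controlling how the errors accumulate over $k+h = O(n^{\epsilon_0} \cdot M\log n)$ factors.

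Next I would substitute the single-monomial asymptotics. For a $2$-spin term with $|K_i| < 1$, Lemma~\ref{exp_basic} gives $\log \E[\exp(\widehat K_i)] = (1-K_i^2)^{-1/2}(1+O(n^{-\epsilon}))$; wait — more precisely that lemma states $\log\E[\exp(\widehat K_i)] = (1-K_i^2)^{-1/2}(1+O(n^{-\epsilon}))$, so summing over the $2$-spin terms and re-exponentiating gives the stated product form $\prod_{\widehat H_i:\,2\text{-spin}}(1-H_i^2)^{-1/2}(1+O(n^{-\epsilon}))$ once one checks the $O(n^{-\epsilon})$ errors sum to $O(n^{-\epsilon})$ after multiplication by $k = O(n^{\epsilon_0})$ (so one needs $\epsilon_0$ small relative to the $\epsilon$ of Lemma~\ref{exp_basic}, which is permitted since $\epsilon_0$ is a free small parameter). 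For a $p$-spin term with $p\ge 3$ and $|H_i|<H_p^*$, Lemma~\ref{exp_basic} gives $\log\E[\exp(\widehat H_i)] = \tfrac12 n^{2-p}H_i^2(1+O(n^{-\epsilon}))$. Therefore, if the smallest present interaction order is $\beta(p)=2$, the $2$-spin contributions dominate all the $p\ge 3$ contributions (which are each $O(n^{-1})$ and even summed over $O(n^{\epsilon_0})$ terms are $O(n^{\epsilon_0 - 1}) = o(1)$), giving the first formula; if $\beta(p)\ge 3$, then among the surviving orders the $\beta(p)$-spin terms have the largest scale $n^{2-\beta(p)}$, strictly larger than $n^{2-q}$ for any $q>\beta(p)$ present, so $\log Z_n = \tfrac12 n^{2-\beta(p)}\sum_{\widehat H_i:\,\beta(p)\text{-spin}} H_i^2 \,(1+O(n^{-\epsilon}))$, where the relative error now also absorbs the ratio $n^{2-q}/n^{2-\beta(p)} = n^{\beta(p)-q} \le n^{-1}$ of the higher-order terms.

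The main obstacle is the matching \emph{lower} bound, i.e.\ showing the near-factorization $\E[\exp(\sum_i \widehat H_i)] \ge \prod_i \E[\exp(\widehat H_i)]\,(1-O(n^{-\epsilon}))$ rather than just the $\le$ of Lemma~\ref{decompose}. The subtlety is that Lemma~\ref{decom_1}'s inequality goes the ``wrong way'' for a lower bound, so I cannot simply reverse it; instead I would argue that for disjoint-support monomials the \emph{leading} Taylor terms factor exactly up to the moment-ratio correction, and the corrections are controlled precisely because the phase-transition Lemmas~\ref{phase3}–\ref{phase2} certify that, below threshold, the partition sum is dominated by Taylor indices $\ell$ with $\sum \ell_i = o(n)$ — on that range the hypergeometric-type ratio $\E[|X|^{n-1}]\prod_i\E[|X|^{a_i}] \big/ \E[|X|^{\,n-1+\sum a_i}]$, which is exactly what Lemma~\ref{integral} produces, differs from $\prod_i \big(\E[|X|^{n-1}]\E[|X|^{a_i}]/\E[|X|^{n-1+a_i}]\big)$ by a factor $1 + O((\sum a_i)^2/n) = 1+O(n^{-\epsilon})$. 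Making this ratio estimate uniform over the dominant range of indices, and bounding the tail contribution of large indices by the already-established concentration of the single-monomial sums, is the technical heart; once it is in place, the proposition follows by the bookkeeping above.
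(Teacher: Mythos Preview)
Your proposal is correct in outline, and the moment-ratio lower bound you sketch does work; in fact it is essentially the argument the paper carries out for the $\beta(p)=2$ case (equations~\eqref{eq:con_un_1}--\eqref{eq:con_un_2}). The difference is that the paper treats the two cases with separate, cheaper tricks rather than a unified term-by-term comparison.

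For $\beta(p)\ge 3$ the paper sidesteps the moment-ratio analysis entirely. It uses the pointwise bound $\exp(x)\ge 1+x+\tfrac12 x^2+\tfrac16 x^3$ and observes that, because the monomials have pairwise disjoint spin supports, every term in $H_n(\sigma)$ and in $H_n(\sigma)^3$ contains some spin to an odd power, so $\E[H_n(\sigma)]=\E[H_n(\sigma)^3]=0$. This gives $Z_n\ge 1+\tfrac12\E[H_n(\sigma)^2]=1+\tfrac12\sum_{p\ge3}n^{2-p}\sum H_i^2$ in one line, matching the upper bound from Lemma~\ref{decompose}.

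For $\beta(p)=2$ the paper first strips out the $p\ge3$ terms from the lower bound via $\E[\exp(\sum\widehat K_i+\sum\widehat H_i)]\ge\E[\exp(\sum\widehat K_i)]$: since the $\widehat H_j$-spins are disjoint from the $\widehat K$-spins and from each other, only even powers of each $\widehat H_j$ survive the expectation, and the surviving factor equals $\prod_j\cosh(\widehat H_j)\ge 1$ pointwise. After this reduction the paper runs exactly your truncated-expansion-plus-moment-ratio argument, but restricted to the 2-spin block, which keeps the total degree $\sum a_i$ and hence the correction $O((\sum a_i)^2/n)$ under tighter control.

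So your unified route is a legitimate alternative; what it costs is having to control the moment-ratio correction uniformly over mixed-degree tuples, whereas the paper's case split eliminates that work for $\beta(p)\ge 3$ and confines it to a single homogeneous block for $\beta(p)=2$.
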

\begin{proof}
We first prove the $\beta(p)\geq3$ case. In this case, the Hamiltonian only includes $\widehat{H}_i$ parts and the partition function we are aiming to obtain is
\(
    \E[\exp(\sum_i \widehat{H}_i)].
\)
This has an upper bound
\[
    Z_n=\E[\exp(\sum_i \widehat{H}_i)]\leq \prod_i\exp(\widehat{H}_i)=1+\frac{1}{2}(1+O(n^{-\epsilon}))\sum_{p\geq 3}n^{2-p}\sum_{\widehat{H}_i:p\text{-spin}} H_i^2
\]
due to Lemma \ref{decompose}.
\\By the inequality $\exp(x)\geq 1+x+\frac{1}{2}x^2+\frac{1}{6}x^3$, we also have the lower bound
\[
    \E[\exp(H_n(\sigma))]\geq 1+\E[H_n(\sigma)]+\frac{1}{2}\E[H_n(\sigma)^2]+\frac{1}{6}\E[H_n(\sigma)^3].
\]
Since monomials are not intersecting to each other, we have
\[
    \E[H_n(\sigma)]=\E[H^3_n(\sigma)]=0
\]
and this implies the right side of above inequality is just
\[
    1+\frac{1}{2}\E[H_n(\sigma)^2]=1+\frac{1}{2}\sum_{p\geq 3}n^{2-p}\sum_{\widehat{H}_i:p\text{-spin}} H_i^2.
\]
This proves that for $\beta(p)\geq 3$ case, we have
 \[
        Z_n=1+\frac{1}{2}n^{2-\beta(p)}(1+O(n^{-\epsilon}))\sum_{i:\beta(p)-\text{spin}} H_i^2
    \]
and
 \[
       \log  Z_n=\frac{1}{2}n^{2-\beta(p)}(1+O(n^{-\epsilon}))\sum_{i:\beta(p)-\text{spin}} H_i^2.
    \]
We consider $\beta(p)=2$ case and the partition function is
\[
  Z_n=\mathbb{E}\!\Bigl[\exp\Bigl(\sum_{i}\widehat{K}_i \;+\; \sum_{i}\widehat{H}_i\Bigr)\Bigr].
\]
Since their spin variables are non-intersect, Lemma \ref{decompose} gives the upper bound
\[
 Z_n
  \leq
  \mathbb{E}\!\Bigl[\exp\Bigl(\sum_{i}\widehat{K}_i\Bigr)\Bigr]
  \mathbb{E}\!\Bigl[\exp\Bigl(\sum_{i}\widehat{H}_i\Bigr)\Bigr].
\]
Since above proof for $\beta(p)\geq 3$ case implies 
\(\mathbb{E}\bigl[\exp(\sum_{i}\widehat{H}_i)\bigr]=1 + O(n^{-\epsilon})\), 
Lemma \ref{decompose} gives
\[
  Z_n
  \le\
  \mathbb{E}\!\Bigl[\exp\Bigl(\sum_{i}\widehat{K}_i\Bigr)\Bigr]
  \bigl(1+O(n^{-\epsilon})\bigr)\le\prod_{i} \mathbb{E}\bigl[\exp(\widehat{K}_i)\bigr](1+O(n^{-\epsilon})\bigr).
\]
Furthermore, we have an upper bound
\[
    \E[\exp(\widehat{K})]=\sum_{a=0}^\infty (Kn)^{2a}\frac{\E[X^{2a}]^2}{(2a)!}\frac{\E[|X|^{n-1}]}{\E[|X|^{n-1+4a}]}\leq \sum_{a=0}^{\infty}\frac{1}{2^{2a}}\binom{2a}{a}K^{2a}=(1-K^2)^{-1/2}
\]
for any $K<1$.
This gives the upper bound for the partition function \[
    Z_n\leq \prod_i(1-K_i^2)^{-1/2}(1+O(n^{-\epsilon})).
\]
The remaining part is to shows its lower bound and let us begin with this lower bound:
\[
  \mathbb{E}\!\Bigl[\exp\Bigl(\sum_{i}\widehat{K}_i + \sum_{i}\widehat{H}_i\Bigr)\Bigr]
  \;\;\ge\;\;
  \mathbb{E}\!\Bigl[\exp\Bigl(\sum_{i}\widehat{K}_i\Bigr)\Bigr].
\]
One also obtains lower bounds by expanding the exponential in power series;
\begin{align}\label{eq:con_un_1}
\mathbb{E}\Bigl[\exp\bigl(\sum_{i} \widehat{K}_i\bigr)\Bigr]
  &\geq \sum_{0\le a_i \le n^{\epsilon}\text{ for all }i}\frac{\E[|X^{n-1}|]}{\E[|X|^{n-1+4\sum_i a_i}]}\prod_{i} K_i^{2a_i}\frac{\E[X^{2a_i}]^2}{(2a_i)!}\notag
  \\&\geq
  (\frac{n}{n+ {4n^{\epsilon}l}})^{2n^{\epsilon}l}
  \sum_{0\leq a_i\leq n^{\epsilon}\text{ for all }i} 
  \prod_{i} K_i^{2a_i}\frac{\E[X^{2a_i}]^2}{(2a_i)!}\notag
  \\ &\geq O(\exp(-8n^{2\epsilon-1}l^2))\prod_i\sum_{0\leq a_i\leq n^{\epsilon}\text{ for all }i}\frac{1}{2^{2a_i}}\binom{2a_i}{a_i}K_i^{2a_i}.
\end{align}
The first inequality above comes from
\[
    \frac{\E[|X^{n-1}|]}{\E[|X|^{n-1+4\sum_i a_i}]}\geq (\frac{n}{n+ {4n^{\epsilon}l}})^{2n^{\epsilon}l}.
\]
We also have
\[
    \sum_{a_i>n^{\epsilon}}\frac{1}{2^{2a_i}}\binom{2a_i}{a_i}K_i^{2a_i}\leq \frac{K_i^{2n^{\epsilon}}}{1-K_i^2}
\]
and
\[
    \sum_{a_i=0}^{\infty}\frac{1}{2^{2a_i}}\binom{2a_i}{a_i}K_i^{2a_i}=(1-K_i^2)^{-1/2}.
\]
This implies more precise lower bound:
\begin{align}\label{eq:con_un_2}
\mathbb{E}\Bigl[\exp\bigl(\!\sum_{i}\widehat{K}_i\bigr)\Bigr]
  \;&\ge\;
  O\bigl(\exp(-8\,n^{2\epsilon-1}\,l^2)\bigr)
  \;\prod_{i}\,((1-K_i^2)^{-1/2}- \frac{K_i^{2n^{\epsilon}}}{1-K_i^2})\notag
  \\&\ge(1+O(n^{-\epsilon}))\prod_{i}(1-K_i^2)^{-1/2}.
\end{align}
This finally gives the lower bound 
\[
    Z_n\geq \prod_i(1-K_i^2)^{-1/2}(1+O(n^{-\epsilon})).
\]
Combining all above and we finally have 
\begin{align*}
\mathbb{E}\!\Bigl[\exp\Bigl(\sum_i \widehat{K}_i + \sum_i \widehat{H}_i\Bigr)\Bigr]
  =   \prod_i\,(1-K_i^2)^{-1/2}   \;\bigl(1 + O(n^{-\epsilon})\bigr).
\end{align*}
\end{proof}
With the above proposition and \eqref{eq:con_un_1}, \eqref{eq:con_un_2}, we also have the following concentration property:
\beq\label{eq:con}
    Z_n= \sum_{(a,b)\in A} \prod_i \frac{\widehat{K}^{2a_i}}{(2a_i)!} \prod_i \frac{\widehat{H}^{2b_i}}{(2b_i)!}[1+O(n^{-\epsilon})]
\eeq
where $a=(a_1,...,a_k)$ and $b=(b_1,...,b_h)$, $A=\{(a,b)| 0\leq a_i\leq n^{\epsilon}\text{ for all } i ,b=0\}$. This equality will be used to prove the following proposition about the Gibbs measure structure. Their overlap converges to 0 by showing their variance is small enough as below. 
\begin{prop}\label{under_overlap}
    For under the threshold, its overlap converges to 0. 
    More precisely, its variance over Gibbs measure satisfies
    \[
        \langle R_{1,2}^2\rangle=O(n^{-\epsilon}).
    \]
\end{prop}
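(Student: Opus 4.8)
The plan is to bound the second moment of the restricted overlap
\[
R_{1,2}=\frac1n\sum_{i=1}^n \sigma_i^1\sigma_i^2
\]
by controlling, spin-by-spin, the Gibbs expectation of monomials of low degree. The starting point is the concentration identity \eqref{eq:con}, which says that up to a factor $1+O(n^{-\epsilon})$ the partition function is carried by the terms with $b=0$ and $0\le a_i\le n^{\epsilon}$; in particular, every $\widehat H_i$ monomial is effectively absent from the effective measure, and each $\widehat K_i=K_i\sigma_{i_1}\sigma_{i_2}$ contributes only through its even powers. First I would expand $\langle R_{1,2}^2\rangle=\frac1{n^2}\sum_{i,j}\langle\sigma_i^1\sigma_i^2\rangle\langle\sigma_j^1\sigma_j^2\rangle$ --- since the two replicas are independent given the disorder, $\langle\sigma_i^1\sigma_i^2\rangle=\langle\sigma_i\rangle^2$, so it suffices to show $\sum_i \langle\sigma_i\rangle^2 = O(n^{1-\epsilon})$ and more generally $\sum_{i,j}\langle\sigma_i\sigma_j\rangle^2$ stays $o(n^2)$. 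For a spin $\sigma_i$ that does not appear in any monomial, symmetry $\sigma_i\mapsto-\sigma_i$ of both the sphere measure and $H_n$ forces $\langle\sigma_i\rangle=0$ exactly. For a spin appearing in a $2$-spin term $\widehat K=K\sigma_{i_1}\sigma_{i_2}$, the same sign flip applied to $\sigma_{i_1}$ alone still shows $\langle\sigma_{i_1}\rangle=0$; what survives is $\langle\sigma_{i_1}\sigma_{i_2}\rangle\ne0$, which by the single-monomial computation (Lemma \ref{integral} together with the $p=2$ sub-threshold analysis, Lemma \ref{phase2}) is $O(1)$ --- not decaying, but there are only $O(n^{\epsilon_0})$ such pairs, so their contribution to $n^2\langle R_{1,2}^2\rangle$ is $O(n^{\epsilon_0})=o(n^2)$.

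The key step is therefore the diagonal-type estimate: $\langle R_{1,2}^2\rangle = \frac1{n^2}\sum_{i,j}\langle\sigma_i\sigma_j\rangle^2$, and I would split the double sum according to whether $i,j$ lie in the same monomial, in different monomials, or at least one is a ``free'' spin. Free spins contribute zero by the symmetry argument above. For $i,j$ in distinct monomials (or distinct $2$-spin edges) the factorization Lemma \ref{decompose} / Lemma \ref{decom_1}, combined with \eqref{eq:con}, gives $\langle\sigma_i\sigma_j\rangle=\langle\sigma_i\rangle\langle\sigma_j\rangle$ up to $1+O(n^{-\epsilon})$, and each factor is $0$ unless the spin is coupled, so again the only surviving pairs $(i,j)$ are those where both $i$ and $j$ lie in (possibly different) $2$-spin edges --- at most $O(n^{2\epsilon_0})$ such pairs, each term $O(1)$. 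Choosing $\epsilon_0$ small relative to the final $\epsilon$ (which is legitimate since $\epsilon_0$ is a free parameter of the NIM model) makes $n^{2\epsilon_0}=O(n^{2-\epsilon})$, hence $\langle R_{1,2}^2\rangle=O(n^{-\epsilon})$. The quantitative input I need is: (i) $\langle\sigma_{i_1}\sigma_{i_2}\rangle=O(1)$ and $\langle\sigma_{i_1}^2\rangle=O(1)$ for a $2$-spin edge, which follows from $\E[\sigma_{i_1}^2\cdots]/Z_n$ bounds via \eqref{eq:con}; (ii) the absence of any $\widehat H_i$ (higher-spin) contribution, already guaranteed by the set $A$ in \eqref{eq:con}.

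The main obstacle I anticipate is making the ``factorize and everything vanishes by symmetry'' heuristic rigorous when the two spins $i,j$ sit in the \emph{same} $2$-spin edge: there $\langle\sigma_{i_1}\sigma_{i_2}\rangle$ is genuinely $\Theta(1)$, so the argument cannot be ``each term is small'' but must be ``there are few such terms.'' This forces me to track the exact number of coupled spins, which is $O(n^{\epsilon_0})$ by the definition of the NIM model, and to be careful that the error terms $O(n^{-\epsilon})$ in \eqref{eq:con} do not, after multiplication by $n^2$ in $\langle R_{1,2}^2\rangle$, swamp the bound --- this is why the estimate must be carried out \emph{inside} the sum over $(i,j)$, bounding $\langle\sigma_i\sigma_j\rangle$ itself by $O(n^{-\epsilon})$ for all non-edge pairs rather than bounding $R_{1,2}^2$ in one shot. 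A secondary subtlety is the $\beta(p)=2$ case where two-spin edges are abundant ($O(n^{\epsilon_0})$ of them) yet still negligible against $n^2$; I would handle the higher-spin-only case ($\beta(p)\ge 3$) first, where \emph{every} coupled spin still has $\langle\sigma_i\rangle=0$ and in fact $\langle\sigma_i\sigma_j\rangle=O(n^{-\epsilon})$ even within a monomial because the surviving even-power terms already cost a factor $n^{2-p}$, giving the cleaner bound, and then note the $2$-spin correction is a finite-rank perturbation contributing only $O(n^{2\epsilon_0-2})$.
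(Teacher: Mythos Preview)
Your high-level route---reduce $\langle R_{1,2}^2\rangle$ to $\frac{1}{n^2}\sum_{i,j}\langle\sigma_i\sigma_j\rangle^2$ via replica independence and then kill almost all $(i,j)$---is a cleaner alternative to the paper's direct two-replica Taylor expansion. But the proposal as written has two real gaps.

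First, your justification for why $\langle\sigma_i\sigma_j\rangle$ vanishes when $i,j$ sit in distinct monomials is wrong. Lemmas~\ref{decom_1} and~\ref{decompose} are one-sided inequalities for the \emph{sphere} measure, not approximate factorization of \emph{Gibbs} correlations; and a multiplicative error ``up to $1+O(n^{-\epsilon})$'' around $\langle\sigma_i\rangle\langle\sigma_j\rangle=0$ yields no usable bound. What actually works is the exact parity argument (which is also the engine of the paper's proof): expand $\E[\sigma_i\sigma_j\exp(H_n)]$ term-by-term and observe, by Lemma~\ref{integral}, that a nonzero contribution requires every spin to appear to even power. Since monomials are non-intersecting, forcing $\sigma_i$ to even power makes the power of the monomial containing $i$ odd, hence every other spin in that monomial odd---which is impossible unless that monomial is a $2$-spin edge $\{i,j\}$. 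So $\langle\sigma_i\sigma_j\rangle=0$ \emph{exactly} for all $i\ne j$ except $2$-spin edges; no approximation is needed, and the $p\ge3$ ``within a monomial'' case is zero too, not merely $O(n^{-\epsilon})$.

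Second, you overlook the diagonal $i=j$. The claim ``free spins contribute zero by symmetry'' fails there: $\langle\sigma_i^2\rangle\approx 1$ for each of the $n-o(n)$ free spins, so $\frac{1}{n^2}\sum_i\langle\sigma_i^2\rangle^2$ is a genuine $O(n^{-1})$ contribution you must account for. This is easy once you use the concentration~\eqref{eq:con}: on the set $A$ the total degree is $O(n^{\epsilon+\epsilon_0})$, so for any $i$ one gets $\langle\sigma_i^2\rangle=O(n^{\epsilon})$ uniformly (and $1+o(1)$ for free spins), whence $\sum_i\langle\sigma_i^2\rangle^2=O(n)$. Also, your opening identity $\langle R_{1,2}^2\rangle=\frac{1}{n^2}\sum_{i,j}\langle\sigma_i^1\sigma_i^2\rangle\langle\sigma_j^1\sigma_j^2\rangle$ is a slip---that equals $\langle R_{1,2}\rangle^2$; the correct reduction is $\langle\sigma_i^1\sigma_j^1\sigma_i^2\sigma_j^2\rangle=\langle\sigma_i\sigma_j\rangle^2$, which you do land on a sentence later. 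With these three corrections (parity instead of factorization, the diagonal bound, and the right replica identity), your argument goes through and is shorter than the paper's.
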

We use the same notation for its Hamiltonian as above and $'$ means the replica of the spin. More specifically, we have two Hamiltonians 
\[
    H_n(\sigma)=\sum_{i=1}^k \widehat{K}_i + \sum_{i=1}^h\widehat{H}_i, \quad H_n(\sigma')=\sum_{i=1}^k \widehat{K}'_i+\sum_{i=1}^h\widehat{H}'_i.
\]
We are aiming to figure out the expectation value of their overlap \[ \mathbb{E}\!\Bigl[R_{1,2}^2       \,\exp\Bigl(H_n(\sigma)+H_n(\sigma')\Bigr)\Bigr]. \]
Here the expectation means expectation over two independent uniform measure on the sphere.
This can be expanded by Taylor series as
\begin{align*}
&\mathbb{E}\!\Bigl[(\frac{\sigma_1\sigma_1'+\cdots+ \sigma_n\sigma_n'}{n})^2       \,\exp\Bigl(\sum_{i=1}^k \widehat{K}_i + \sum_{i=1}^h\widehat{H}_i+\sum_{i=1}^k \widehat{K}'_i+\sum_{i=1}^h\widehat{H}'_i\Bigr)\Bigr]
\\ & \; = \mathbb{E}\!\Bigl[(\frac{\sigma_1\sigma_1'+\cdots +\sigma_n\sigma_n'}{n})^2       \,\bigl(\sum_{a,b,c,d}\prod_i \frac{\widehat{K}_i^{a_i}}{a_i!}\prod_i \frac{\widehat{H}_i^{b_i}}{b_i!}
\prod_i \frac{\widehat{K}_i'^{c_i}}{c_i!}
\prod_i \frac{\widehat{H}_i'^{d_i}}{d_i!}\bigr)\Bigr]
\end{align*}
where $a=(a_1,...,a_k),b=(b_1,...,b_h),c=(c_1,...,c_k),d=(d_1,...,d_h)$. We further define \[h(a,b,c,d)=\prod_i \frac{\widehat{K}_i^{a_i}}{a_i!}\prod_i \frac{\widehat{H}_i^{b_i}}{b_i!}
\prod_i \frac{\widehat{K}_i'^{c_i}}{c_i!}
\prod_i \frac{\widehat{H}_i'^{d_i}}{d_i!}\] to elaborate more convenient way.
This sum is expressed as the linear combination of the form
\[
\E[\sigma_i\sigma_i'\sigma_j \sigma_j' h(a,b,c,d)\bigr].
\]
The only possible conditions that can make the above expectation non-zero are
$i=j$ and all $a_i,b_i,c_i,d_i$ are even or $\widehat{K}_{k}=K_{k}\sigma_i\sigma_j$ for some $k$ and $a_k$ is odd, and other terms are even.
For the latter case, we have the upper bound using their nonintersecting property. This shows
\begin{align*}
&\E[\sigma_i\sigma_i'\sigma_j \sigma_j' \exp\Bigl(\sum_i \widehat{K}_i + \sum_i \widehat{H}_i+\sum_i \widehat{K}'_i+\sum_i \widehat{H}'_i\Bigr)\bigr]\\ &\; \leq \E[\sigma_i\sigma_j \exp(\widehat{K}_k)]^2\prod_{l\neq k}\E[\exp(\widehat{K}_l)]^2\prod_l \E[\exp(\widehat{H}_l)]^2.
\end{align*}
The first term has an upper bound using expansion as
\begin{align*}
    \E[\sigma_i\sigma_j \exp(\widehat{K}_k)]&= \sum_{a=0}^{\infty}\frac{\E[X^{2a+2}]^2(K_kn)^{2a+1}\E[|X|^{n-1}]}{(2a+1)!\E[|X|^{n-1+4a+2}]}
    \; \leq\sum_{a=0}^{\infty}\frac{\E[X^{2a+2}]^2}{(2a+1)!}K^{2a+1}_k \\
    &\; \leq \sum_{a=0}^{\infty}(2a+2)K_k^{2a+1}  \; \leq \frac{2K_k}{(1-K^2_k)^2}  \; \leq \E[\exp(\widehat{K}_k)]\frac{2K_k}{(1-K_k)^{3/2}}.
\end{align*}
Let $B'=\{(i,j)|\widehat{K}_a=K_a\sigma_i\sigma_j\text{ for some }a\}$ and $B=\{a|\widehat{K}_a=K_a\sigma_i\sigma_j \text{ for some } i,j\}$, then
\begin{align*}
    &\sum_{(i,j)\in B'} \E[\sigma_i\sigma_j \sigma_i'\sigma_j' \exp(\sum_l \widehat{K}_l +\sum_l \widehat{H}_l + \sum_l\widehat{K}_l' +\sum_l \widehat{H}'_l)]
    \\&\; \leq \sum_{(i,j)\in B'}\E[\sigma_i\sigma_j \exp(\widehat{K}_k)]^2\prod_{l\neq k}\E[\exp(\widehat{K}_l)]^2\prod_l \E[\exp (\widehat{H}_l)]^2
    \\ & \; \leq \E[\exp (\sum_i \widehat{K}_i+\sum_i \widehat{H}_i)]^2(1+O(n^{-\epsilon}))\sum_{k\in B} (\frac{2K_k}{(1-K_k^2)^{3/2}})^2.
\end{align*}
The remaining part is to calculate all powers even. We now only care about that parts by omitting others. We define 
\(
A =\{\, (a, b) \;|\; \;0 \le a_i \le n^\epsilon \text{ for all } i, \ b=0\}.
\)
Using the bound 
\(\bigl|R_{1,2}\bigr|\le 1\) and \eqref{eq:con}, we have the upper bound
\begin{align*}
\sum_{(a,b)\notin A} \mathbb{E}[R_{1,2}^2h(2a,2b,2c,2d)] \leq \sum_{(a,b)\notin A} \mathbb{E}[h(2a,2b,2c,2d)] 
  \leq O(n^{-\epsilon})\E[\exp (
H_n(\sigma)
)]^2.
\end{align*}
Similar argument implies
\[
\sum_{(c,d)\notin A}  \mathbb{E}[R_{1,2}^2h(2a,2b,2c,2d)] 
\leq O(n^{-\epsilon})\E[\exp (
\sum_i \widehat{K}_i
)]^2.
\]
\\The remaining part satisfies
\begin{align*}
\sum _{(a,b)\in A, (c,d)\in A}\mathbb{E}\!\Bigl[R_{1,2}^2       \,h(2a,2b,2c,2d)\Bigr]
 &  = \sum _{(a,b)\in A, (c,d)\in A}\mathbb{E}\!\Bigl[\frac{\sum _{i=1}^n\sigma _i^2 \sigma_i'^2}{n^2}       \,h(2a,2b,2c,2d)\Bigr]
\\ &  \leq \frac{(2n^{\epsilon}+1)^2}{n^2}\E[\exp(H_n(\sigma))]^2.
\end{align*}
Summing everything yields
\begin{align*}
\E[R_{1,2}^2\exp(H_n(\sigma)+H_n(\sigma'))]
 \ \leq \E[\exp(H_n(\sigma))]^2(\frac{4}{n^2}\sum_{i\in B}(\frac{K_i}{(1-K_i^2)^{3/2}})^2+O(n^{-\epsilon}+\frac{(2n^{\epsilon}+1)^2}{n^2}) ).
\end{align*}
Consequently, we have 
\[
    \langle R_{1,2}^2\rangle=\frac{\E[R_{1,2}^2\exp(H_n(\sigma)+H_n(\sigma'))]}{\E[\exp(H_n(\sigma))]^2}= O(n^{-\epsilon})
\]
indicating that the limiting distribution approaches 0. This confirms the presence of replica symmetry. This trivial overlap also implies ultrametricity and temperature chaos for this region. Now we deal with surpassing the threshold case.

\subsection{Above threshold: Free energy and concentration property}
When NIM model has above threshold interactions, we define its Hamiltonian as follows: 
\[
H_n(\sigma)=
\sum_{i=1}^h \widehat{H}_i
\;+\;
\widehat{K}
\;+\;
\sum_{i=1}^l \widehat{L}_i
\;+\;
\sum_{i=1}^m \widehat{M}_i,
\]
where $\widehat{H}_i$ are $h_i$-spin terms, $\widehat{K}$ is a 2-spin term, $\widehat{L}_i$ are $l_i$-spin terms, and $\widehat{M}_i$ still other 2-spin terms.  Each terms have of the form $\widehat{X}=X\sigma_I n^{-(|I|-2)/2}$ where X is real number and $\sigma_I$ is the corresponding spin monomial. Here we use notation $\sigma_I=\prod_{i\in I}\sigma_i$.
Moreover, these terms have these ordering of the size below
\[
\mathbb{E}\bigl[\exp(\widehat{H}_i)\bigr]
\;\sim\;
\mathbb{E}\bigl[\exp(\widehat{K})\bigr]
\;>\;
\exp(n^{1-2\epsilon})\,\mathbb{E}\bigl[\exp(\widehat{L}_i)\bigr],
\quad
\exp(n^{1-2\epsilon})\,\mathbb{E}\bigl[\exp(\widehat{M}_i)\bigr],
\]
for some $\epsilon_0 < \epsilon<1/2$.
We write $A\sim B$ if $n^{-M} A \le B \le n^M A$ for some $M>0$.
\\We define tuples
\[
a \;=\;(a_1,\dots,a_h), 
\quad
b,
\quad
c \;=\;(c_1,\dots,c_l),
\quad
d \;=\;(d_1,\dots,d_m),
\]
and consider
\[
f(a,b,c,d)
\;=\;
\mathbb{E}\Biggl[
\prod_{i=1}^h \frac{\widehat{H}_i^{2\,a_i}}{(2\,a_i)!}
\;\cdot\;
\frac{\widehat{K}^{2\,b}}{(2\,b)!}
\;\cdot\;
\prod_{i=1}^l \frac{\widehat{L}_i^{2\,c_i}}{(2\,c_i)!}
\;\cdot\;
\prod_{i=1}^m \frac{\widehat{M}_i^{2\,d_i}}{(2\,d_i)!}
\Biggr].
\]
Also, we define $T$ as the maximum possible value of this function.
\\Our first goal in this section is to show that this sum is concentrated on these regions:
\[
A_i = \bigl\{
\lambda_{i}(1-n^{-\epsilon})\,n \le a_i \le \lambda_{i}(1+n^{-\epsilon})\,n,\
0 \le b \le n^\epsilon,\; c_j=0,\; d_j \le n^{\epsilon}
\text{ for all } j\bigr\}
\]
and
\[
B = \bigl\{
\lambda(1-n^{-\epsilon})\,n\le b \le \lambda(1+n^{-\epsilon})\,n,\;
a_j=0,\; c_j=0,\; d_j \le n^{\epsilon} \text{ for all } j
\bigr\}
\]
where $\lambda_i=\lambda_{h_i}(H_i)$ 
and  $\lambda= \lambda_2(K)$.
\\From now on, we express $e=(a,b,c,d)$ and $A=\cup A_i\cup B$. We will prove that the sum of function is concentrated on small subsets as proposition below.
\begin{prop}\label{conc_nim}
The Taylor expansion summation of its partition function is concentrated on its subset of tuples:
    \[
  \sum_{e} f(e) 
  \;=\; 
  \sum_{e \,\in\, \cup A_i \cup B} f(e)\,\bigl(1 + O\bigl(n^{-\epsilon}\bigr)\bigr).
\]
\end{prop}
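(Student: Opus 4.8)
The plan is to establish concentration by a two-sided estimate: a lower bound showing that the sum over $\cup A_i \cup B$ already captures the correct leading order, and an upper bound showing that the complementary sum is negligible. The lower bound is immediate since all terms $f(e)$ are nonnegative, so $\sum_e f(e) \ge \sum_{e \in \cup A_i \cup B} f(e)$. The work is entirely in the upper bound: I must show $\sum_{e \notin \cup A_i \cup B} f(e) = O(n^{-\epsilon}) \sum_{e \in \cup A_i \cup B} f(e)$.

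\textbf{Step 1: Factorization via non-intersection.} Because all the monomials $\sigma_I$ appearing in $\widehat H_i, \widehat K, \widehat L_i, \widehat M_i$ are supported on pairwise disjoint index sets, Lemma~\ref{decom_1} gives $\mathbb{E}[\prod \sigma_{I_t}^{a_t}] \le \prod \mathbb{E}[\sigma_{I_t}^{a_t}]$, so $f(e)$ factors (up to a harmless multiplicative error controlled by the ratio of Gaussian absolute moments $\mathbb{E}[|X|^{n-1}]/\mathbb{E}[|X|^{n-1+2\Sigma}]$ where $\Sigma$ is the total degree) into a product $\prod_i \frac{\mathbb{E}[\widehat H_i^{2a_i}]}{(2a_i)!} \cdot \frac{\mathbb{E}[\widehat K^{2b}]}{(2b)!} \cdot \prod_i \frac{\mathbb{E}[\widehat L_i^{2c_i}]}{(2c_i)!} \cdot \prod_i \frac{\mathbb{E}[\widehat M_i^{2d_i}]}{(2d_i)!}$. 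By Lemma~\ref{convexity1} and Lemma~\ref{convexity2}, the logarithm of each factor is a convex function of the (continuous relaxation of the) exponent, so each one-variable sum is, up to polynomial factors, dominated by its value at the unique maximizing integer; by Lemma~\ref{phase3} and Lemma~\ref{phase2}, that maximizer is $\lambda_{h_i}(H_i)n$ for the $\widehat H_i$-factor when $H_i$ is above threshold, $\lambda_2(K)n$ for the $\widehat K$-factor, $0$ for each below-threshold $\widehat L_i$-factor, and $O(n^\epsilon)$ for each $\widehat M_i$-factor.

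\textbf{Step 2: At most one above-threshold block can be ``on''.} The key combinatorial point is the size ordering: $\mathbb{E}[\exp(\widehat H_i)] \sim \mathbb{E}[\exp(\widehat K)] > \exp(n^{1-2\epsilon}) \mathbb{E}[\exp(\widehat L_j)], \exp(n^{1-2\epsilon})\mathbb{E}[\exp(\widehat M_j)]$. Suppose a tuple $e$ has two distinct above-threshold indices simultaneously activated near their peaks (say $a_i \approx \lambda_i n$ and $a_{i'} \approx \lambda_{i'} n$ with $i \ne i'$, or $a_i \approx \lambda_i n$ and $b \approx \lambda n$). Using the single-monomial asymptotics (Lemma~\ref{exp_basic}, Lemma~\ref{phase3}) one sees that the joint contribution of such a tuple is bounded by $T$ times a factor that is \emph{superpolynomially} smaller than the contribution of the ``single-block'' tuples, because activating a second block of size $\Theta(n)$ forces the total degree $\Sigma$ up to $\Theta(n)$, and the ratio $\mathbb{E}[|X|^{n-1}]/\mathbb{E}[|X|^{n-1+2\Sigma}]$ then contributes an extra $\exp(-\Theta(n\log n))$ penalty that is not compensated by the gain from the second monomial's moments (which only grows like the single-block peak, by the very definition of the threshold). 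Hence every tuple not contained in some $A_i$ or in $B$ either (a) activates two $\Theta(n)$-size blocks, or (b) pushes some below-threshold block ($c_j$, or $b$ or $d_j$ beyond $n^\epsilon$) into a regime where, by the subthreshold half of Lemma~\ref{phase3}/Lemma~\ref{phase2}, its partial sum tail is already $O(n^{-\epsilon})$ of its leading part, or (c) pushes an above-threshold block off its peak window $[\lambda_i(1-n^{-\epsilon})n, \lambda_i(1+n^{-\epsilon})n]$, where the convexity in Step 1 again yields an $O(n^{-\epsilon})$ (in fact $\exp(-n^{1-2\epsilon})$-type) relative loss.

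\textbf{Step 3: Summation of the error.} Collecting the three cases: case (a) costs a superpolynomial factor, case (b) and case (c) each cost a factor $O(n^{-\epsilon})$ per off-window block, and there are at most $\mathrm{poly}(n) \cdot (M\log n)$ blocks in total (recall $2 \le p \le M\log n$ and $n^{\epsilon_0}$ interactions per $p$), so the number of tuples in each ``near-peak in exactly one block'' stratum is at most polynomial after the window restriction, and crude bounds on the remaining sums (geometric tails for the below-threshold factors, the explicit concentration windows of Lemma~\ref{phase3} for the above-threshold factor) show $\sum_{e \notin \cup A_i \cup B} f(e) \le O(n^{-\epsilon}) \sum_{e \in \cup A_i \cup B} f(e)$. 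Combining with the trivial lower bound gives the claimed identity.

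\textbf{Main obstacle.} The delicate point is Step 2 case (a): ruling out the simultaneous activation of two macroscopic blocks. Naively each block of size $\Theta(n)$ contributes a factor $\sim \exp(\Theta(n))$, so one must show the ``cross term'' penalty from the shared normalization $\mathbb{E}[|X|^{n-1+2\Sigma}]$ in the denominator of Lemma~\ref{integral} strictly dominates. This is exactly where the \emph{definition} of the threshold $H_p^*$ (as the value of $H$ where $g(p,\lambda_p(H),H)$ crosses zero, i.e. where the single-monomial free energy transitions from the $n^{2-p}$ scale to the $n$ scale) must be used quantitatively: below threshold a block ``wants'' exponent $0$, and the energetic gain from forcing it to exponent $\Theta(n)$ is strictly less than the entropic/normalization cost, with a gap that is $\Theta(n\log n)$ rather than merely $\Theta(n)$, so that even after summing over $\mathrm{poly}(n)$ choices of which pair of blocks to activate the total remains negligible. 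I would isolate this as a separate lemma comparing $g(p,c,H) + g(q,c',K)$ against $\max\{g(p,\lambda_p(H),H), g(q,\lambda_q(K),K)\}$ for the relevant ranges of $c,c'$, and prove the strict inequality by the convexity already recorded in Lemma~\ref{convexity1}–\ref{convexity2} together with the characterization of $\lambda_p$ as the larger root of the threshold equation.
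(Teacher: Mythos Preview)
Your overall architecture (bound the complementary sum, reduce to single-monomial estimates) is sound, but Step~2 contains a quantitative error that breaks the argument. You claim that activating a second macroscopic block incurs a penalty of order $\exp(-\Theta(n\log n))$ from the shared denominator $\mathbb{E}[|X|^{n-1}]/\mathbb{E}[|X|^{n-1+2\Sigma}]$. This is false: when $\Sigma=\Theta(n)$, the $n\log n$ contributions to $\log\mathbb{E}[|X|^{n-1+2\Sigma}]$ cancel against those already present in the individual Gaussian moment factors, and the net ``cross--penalty'' relating $f(e)$ to the product of single-block values is only $\exp(-\Theta(n))$. But each dominant block at its peak contributes a factor $\exp(n f_{h_i}(H_i))=\exp(\Theta(n))$ as well, so ruling out simultaneous activation of two above-threshold blocks becomes a comparison of \emph{constants} in the exponent, not of orders of magnitude. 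Your proposed lemma comparing $g(p,c,H)+g(q,c',K)$ to the single-block maxima is therefore not the trivial consequence of convexity you suggest; it is a genuine inequality that you have neither stated precisely nor proved, and your ``Main obstacle'' paragraph misidentifies the hard case as below-threshold blocks rather than the multiple comparable above-threshold $\widehat H_i$'s that the setup explicitly allows.

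The paper's proof avoids this entirely by a different mechanism. Instead of factorizing, it fixes the total weighted spin-degree $S=\sum_i a_i h_i+2b+\sum_i c_i l_i+2\sum_i d_i$ and observes that on each slice $\{S=\text{const}\}$ the coupling term $-\log\mathbb{E}[|X|^{n-1+2S}]$ is constant, so $\log f(e)$ genuinely decomposes as a sum of single-variable convex functions (Lemma~\ref{convexity2}). Hence on each slice the maximum of $f$ is attained at a vertex, i.e.\ with all of $S$ concentrated in a single coordinate. This reduces the multi-block problem to the single-block problem already solved in Lemmas~\ref{phase3}--\ref{phase2}, with no need for any cross-term inequality. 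The rest of the paper's argument is then a careful cascade of reductions using explicit increment ratios $f_{x_i}(e)/f(e)$ (equations~\eqref{eq:op_bd}--\eqref{eq:op_ac}) to peel off $S$ outside the peak windows, then $S_d$, then $b$, then the residual $S_i$, at each stage counting the number of tuples against the decay. Your Step~1 gestures at convexity but never uses it in this slice-wise way; the fixed-$S$ reduction is the missing idea.
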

\begin{proof}
The main idea is using convexity about the function $f(e)$.
We define their sum with weight the number of corresponding spin as $S=\sum_i a_ih_i+2b+\sum_i l_ic_i+\sum_i 2d_i$.
For fixed constraint $S$, using the convexity in Lemma \ref{convexity2}, we obtain the following upper bound:
\[
  f(a,b,c,d)
  \;\le\;
  \max\Bigl\{
    f\bigl(a_i = S/h_i\bigr),\,
    f\bigl(b = S/2\bigr),\,
    f\bigl(c_i = S/l_i\bigr),\,
    f\bigl(d_i = S/2\bigr)
  \Bigr\}.
\]
Our proof strategy is to reduce the set step by step. Before we calculate further,
We define the operation
\[
  f_{x_i}(e)
  \;=\;
  \text{(the value of $f$ after incrementing $x_i$ in $e$ by 1)},
  \quad
  x\in\{a,b,c,d\}.
\]
Then we have the following inequalities.
For $x_i = b$ or $d_i$, we have
\beq\label{eq:op_bd}
  \frac{f_{x_i}(e)}{f(e)}
  \;\le\;
  \frac{(X_i n)^2}{(n + 2S)(n + 2S + 2)}.
\eeq
  For $x_i = a_i$ or $c_i$, we have
\beq\label{eq:op_ac}
    \frac{f_{x_i}(e)}{f(e)}\leq \frac{(Hn)^2(2x_i+1)^h}{(2x_i+1)(2x_i+2)(n+2S)(n+2S+2)...(n+2S+2(h-1))}
\eeq
where $H$ is the coefficient and $h$ is the degree of the corresponding monomial. We first reduce the case to $\cup A_i' \cup B'$ where \[A_i'=\{\lambda_ih_in(1-n^{-\epsilon})\leq S\leq \lambda_ih_in(1+n^{-\epsilon})\}, \; B'=\{2\lambda n(1-n^{-\epsilon})\leq S\leq 2\lambda n(1+n^{-\epsilon})\}.\]
For \(e \notin \cup A_i' \cup B'\), \eqref{eq:op_ac} and \eqref{eq:op_bd} give
\[
  f(e) \le \exp\bigl(-C\,n^{\,1 - 2\epsilon}\bigr)\;T.
\]
We apply this inequality and we have 
\beq\label{eq:above_1}
\sum _{S\leq S', e\notin \cup A_i'\cup B'} f(e) \leq \exp\bigl(-C\,n^{\,1 - 2\epsilon}\bigr)\;T\binom{S'+Mn^{\epsilon_0\log n}}{Mn^{\epsilon_0}\log n }
\eeq
where $S'=\max\bigl\{\,\lambda_i h_in\bigl(1 + n^{-\epsilon}\bigr),\;
2\lambda n\bigl(1 + n^{-\epsilon}\bigr)\bigr\}$ and the last term is the upper bound of the number of pairs whose weighted sum $S$ is less than equal to $S'$. The number of variables is $Mn^{\epsilon_0}\log n $ from our NIM model definition.
The remaining part is the cases of $S>S'$. Summing over all $S > S'$, \eqref{eq:op_bd} and \eqref{eq:op_ac} give 
\begin{align}\label{eq:above_2}
  \sum_{S > S'} f(e)
  \;&\le\;
  \exp\bigl(-C\,n^{\,1 - 2\epsilon}\bigr)\;T 
  \;\sum_{S > S'} \binom{S+Mn^{\epsilon_0\log n}}{Mn^{\epsilon_0}\log n }(n^{-\epsilon})^{\tfrac{S-S'}{2}}\notag
  \\&\le\;\exp\bigl(-C\,n^{\,1 - 2\epsilon}\bigr)\;T 
  \;\binom{S'+Mn^{\epsilon_0\log n}}{Mn^{\epsilon_0}\log n }C'\notag
  \\&\le \; C'T\exp\bigl( -Cn^{1-2\epsilon}+n^{\epsilon_0}(\log n )^3\bigr).
\end{align}
\eqref{eq:above_1} and \eqref{eq:above_2} reduce our analysis to the case 
\(
  e \;\in\; \cup A_i' \cup B'.
\)
\\We next reduce to a smaller set of variables. Under the constraint 
\[
  \lambda_i n\bigl(1 - n^{-\epsilon}\bigr)<S<\lambda_i n\bigl(1 + n^{-\epsilon}\bigr)
\]
and $S_d=\sum_i 2d_i > 2\,n^{\epsilon'}$, we have
\[
  f(e)\;\le\;C^{\,2n^{\epsilon'}}\,T
  = \exp\bigl(\log(C)\,2n^{\epsilon'}\bigr)\;T.
\]
The number of pairs in this regime is bounded above as
\[
  \binom{S'+Mn^{\epsilon_0}\log n }{Mn^{\epsilon_0}\log n }<\exp(Mn^{\epsilon_0}(\log n )^3)
\]
and by choosing $\epsilon'> \epsilon_0$, we have
\[\sum_{e \in \{S_d > 2n^{\epsilon'}\}\cap (\cup_i A_i'\cup B)} f(e)
  <
  T\exp\bigl(-n^{\epsilon''}\bigr),
\]
for some $\epsilon'' > 0$.

This shows that the main contribution of partition function  is concentrated in
\[
  (\cup A_i' \cup B')
  \cap
  \{S_d < 2n^{\epsilon'}\}.
\]
In the region \(A_i' \cap \{S_d < 2n^{\epsilon'}\}\),  \eqref{eq:op_bd} and \eqref{eq:op_ac} give the upper bound
\[
   f(e) \le C^{2n^{\epsilon'}}T 
   =\exp(\log(C) 2\,n^{\epsilon'})T
\]
for \(b > n^{\epsilon'}\).
This implies the upper bound
\[
   \sum_{\,e \in A_i' \cap \{\,b > n^{\epsilon'}\}} f(e)
   < T\,\exp\bigl(-n^{\epsilon''}\bigr).
\]
This reduce each \(A_i' \cap \{S_d < 2n^{\epsilon'}\}\) to
\[
   A_i'' \;=\; 
   A_i' 
   \;\cap\;\{\,S_i < 2n^{\epsilon'},\; b<n^{\epsilon'}\}.
\]
Let us define
\(
   S_i = \sum_{j\neq i} a_jh_j + \sum_{j} c_jl_j
\)
and we choose small $\epsilon_2>0$ such that $\epsilon_0+\epsilon_2<\epsilon$. For \(S_i < n^{\epsilon_2}\), we have
\[
   f(e) <T \bigl(n^{\epsilon_2-1}\bigr)^{\sum_{j\neq i} a_j(h_j-2) + \sum_{j} c_j(l_j-2) }<Tn^{(\epsilon_2-1)(\sum_{j\neq i} a_j+\sum_j c_j)}.
\]
This gives the following upper bound 
\begin{align*}
   \sum_{1\leq S_i \leq n^{\epsilon_2},S\text{ : fixed}} f(e)&<\sum_{1\leq\sum_{j\neq i} a_j+\sum_j c_j }Tn^{(\epsilon_2-1)(\sum_{j\neq i} a_j+\sum_j c_j)}
   \\&<
   T \,\bigl[(1 - n^{\epsilon_2 - 1})^{-Mn^{\epsilon_0}\log n}-1\bigr]
   \\&<
   T \bigl[\exp\bigl(Mn^{\epsilon_0 + \epsilon_2 -1}\log n\bigr) - 1\bigr]
   \\&<
   TMn^{\epsilon_0 + \epsilon_2 -1}\log n.
\end{align*}
For the region
\(\lambda_in(1 - n^{-\epsilon})<S<\lambda_i n(1 + n^{-\epsilon})\)
and \(1 \le S_i \le n^{\epsilon_2}\) for each \(i\), we obtain
\[
   \sum _{i=1}^h\sum_{\,e\in A_i'',1\leq S_i\leq n^{\epsilon_2}} f(e)
   < TMn^{\,\epsilon_0 + \epsilon_2 - \epsilon}|h|\log n
   <
   TM^2n^{\,(\epsilon_0 + \epsilon_2 - \epsilon)}(\log n)^2.
\]
Similarly for the set \(B'\) with \(\bar S=\sum_j a_jh_j+\sum_j c_j l_j\), we have
\[
    \sum _{e\in B' 1\leq \bar{S} \leq n^{\epsilon_2}}f(e)< TMn^{\epsilon_0+\epsilon_2-\epsilon}.
\]
Define
\[
   \widehat{S}
   =\{
       e \in (\cup_i A_i'')\cup B' 
       : S_i > n^{\epsilon_2}\ \text{for all }i=1,\dots,h
         \;\;\text{and}\;\; \bar{S} > n^{\epsilon_2}
     \}.
\]
In this region, \eqref{eq:op_bd} and \eqref{eq:op_ac} give the upper bound
\[f(e) < n^{\tfrac{1}{3}n^{\epsilon_2}(\epsilon_2-1)}T.\]
This implies the upper bound for the region $\widehat{S}$
\begin{align*}   
   \sum_{\,e \in \widehat{S}} f(e)
   &<
   n^{\tfrac{1}{3}n^{\epsilon_2}(\epsilon_2-1)}\;
   T \;\cdot \binom{S'+Mn^{\epsilon_0}\log n }{Mn^{\epsilon_0}\log n}
   \\&<
   \exp\Bigl(
         C\,n^{\epsilon_0}\,(\log n)^2
         \;+\;
         (\epsilon_2-1)\,\tfrac13\,n^{\epsilon_2}\,\log n
       \Bigr)\;T
   \\&<
   \exp\bigl(-n^{\epsilon_2}\bigr)\,T
\end{align*}
when $\epsilon_2>\epsilon_0$.

Summing over all these regions shows that
\[
   \sum_{e} f(e)
   \;=\;
   \sum_{e\in A} f(e)\,\bigl(1 + O(n^{-\epsilon_3})\bigr),
   \qquad
   A = A_1 \,\cup\,\dots\,\cup\,A_h\,\cup\,B,
\]
for some \(\epsilon_3>0\).  
\end{proof}
\begin{rem}
    Also, using the same method we can prove a similar concentration phenomenon even if replacing some parts of elements to odd, for example, 
\[
g(a,b,c,d)
\;=\;
\mathbb{E}\Biggl[
\prod_{i=1}^h \frac{|\widehat{H}_i|^{2\,a_i+1}}{(2\,a_i+1)!}
\;\cdot\;
\frac{|\widehat{K}|^{2\,b+1}}{(2\,b+1)!}
\;\cdot\;
\prod_{i=1}^l \frac{|\widehat{L}_i|^{2\,c_i+1}}{(2\,c_i+1)!}
\;\cdot\;
\prod_{i=1}^m \frac{|\widehat{M}_i|^{2\,d_i+1}}{(2\,d_i+1)!}
\Biggr]
\]
also exhibit the concentration 
\[\sum_e g(e)= \sum_{e\in A}g(e)(1+O(n^{-\epsilon_3})).\]
\end{rem}
\noindent This proposition gives the following inequality about the partition function:
\[
   T\leq
   Z_n \le O(n^{M_0})\,T 
   \quad\text{for some constant } M_0>0.
\]
This proves the following proposition about the free energy.
\begin{prop}\label{NIM_free}
We have a free energy formula for the NIM model: \[
  F_n= \frac{1}{n}\log Z_n 
   =
   \frac{1}{n}\log T
   +O(n^{-\epsilon})=
   \frac{1}{n}\,\mathbb{E}\bigl[\exp(\widehat{H}_i)\bigr] +O(n^{-\epsilon}).
\]
\end{prop}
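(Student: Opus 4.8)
The plan is to read off Proposition~\ref{NIM_free} from the concentration statement of Proposition~\ref{conc_nim} together with the single‑monomial asymptotics (Lemmas~\ref{exp_basic}, \ref{phase3}, \ref{phase2}) and the convexity in Lemma~\ref{convexity1}. First, recall that expanding $\exp(H_n(\sigma))$ in a power series and averaging over the sphere annihilates every term in which some spin occurs to an odd power; since the monomials of the NIM Hamiltonian are pairwise non‑intersecting (Lemma~\ref{integral}), only the all‑even tuples survive, so $Z_n=\sum_e f(e)$ and in particular $Z_n\ge T=\max_e f(e)$. For the reverse bound, Proposition~\ref{conc_nim} gives $Z_n=(1+O(n^{-\epsilon}))\sum_{e\in A}f(e)\le (1+O(n^{-\epsilon}))\,|A|\,T$, and $|A|=\exp(o(n))$: the NIM Hamiltonian has only $O(n^{\epsilon_0}\log n)$ monomials, and inside each of the finitely many boxes $A_i$ and $B$ every coordinate lies in a window of polynomial size in $n$, so $|A|=\exp\!\big(O(n^{\epsilon_0}(\log n)^2)\big)$. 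This is exactly the sandwich $T\le Z_n\le\exp(o(n))\,T$ recorded just before the statement; taking logarithms and dividing by $n$ yields $\frac1n\log Z_n=\frac1n\log T+O(n^{-\epsilon})$, the first asserted identity.

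The second identity requires matching $\frac1n\log T$ with $\frac1n\log\mathbb{E}[\exp(\widehat H_i)]$ for a dominant monomial. Let $\widehat H_{i_0}$ be the dominant monomial (one of the $\widehat H_i$, or $\widehat K$ if that is the $2$‑spin dominant term) whose single‑exponential average $\mathbb{E}[\exp(\widehat H_{i_0})]$ is largest; all dominant monomials are above threshold and satisfy $\mathbb{E}[\exp(\widehat H_i)]\sim\mathbb{E}[\exp(\widehat H_{i_0})]$, so it suffices to treat $\widehat H_{i_0}$. For the lower bound, the tuple with a single nonzero coordinate equal to $\ell$ has value $f(e)=\mathbb{E}[\widehat H_{i_0}^{2\ell}]/(2\ell)!$, hence $T\ge\max_\ell \mathbb{E}[\widehat H_{i_0}^{2\ell}]/(2\ell)!$; by Lemma~\ref{phase3} (or Lemma~\ref{phase2} in the $2$‑spin case) the series $\mathbb{E}[\exp(\widehat H_{i_0})]=\sum_\ell \mathbb{E}[\widehat H_{i_0}^{2\ell}]/(2\ell)!$ concentrates on the $O(n^{1-\epsilon})$ indices $\ell$ near $\lambda_{i_0}n$, so its largest term — which is one of these $f$‑values — is at least $\mathbb{E}[\exp(\widehat H_{i_0})]/O(n)$, giving $T\ge\mathbb{E}[\exp(\widehat H_{i_0})]/O(n)$. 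For the upper bound, Lemma~\ref{decom_1} dominates each $f(e)$ by the product over monomials $X$ of $\mathbb{E}[\widehat X^{2k_X}]/(2k_X)!$, and by Proposition~\ref{conc_nim} the maximum of $f$ is attained inside $A$; in a box $A_{i_0}$ only one coordinate is of order $n$, the other above‑threshold coordinates have total weight $o(n)$, and convexity of $z\mapsto\log(\mathbb{E}[|\widehat X|^{z}]/z!)$ (Lemma~\ref{convexity1}) — a function vanishing at $z=0$ and reaching $\Theta(n)$ only near $z$ of order $\lambda n$ — bounds each such factor by $\exp(O(\text{its index}))$, so their product is $\exp(o(n))$, while the sub‑threshold factors $\mathbb{E}[\widehat M_j^{2d_j}]/(2d_j)!$ are bounded by $1$ outright (a central‑binomial estimate via Lemma~\ref{integral}). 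Hence $\max_{e\in A_{i_0}}f(e)\le\exp(o(n))\,\mathbb{E}[\exp(\widehat H_{i_0})]$, and likewise on $B$, so $T\le\exp(o(n))\,\mathbb{E}[\exp(\widehat H_{i_0})]$. Combining, $\frac1n\log T=\frac1n\log\mathbb{E}[\exp(\widehat H_{i_0})]+O(n^{-\epsilon})=\frac1n\log\mathbb{E}[\exp(\widehat H_i)]+O(n^{-\epsilon})$ for any dominant $\widehat H_i$.

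I expect the one genuinely delicate step to be the upper bound $T\le\exp(o(n))\,\mathbb{E}[\exp(\widehat H_{i_0})]$. The naive per‑monomial bound $f(e)\le\prod_X\mathbb{E}[\exp\widehat X]$ overcounts by $\mathbb{E}[\exp(\widehat H_{i_0})]$ raised to one less than the number of comparably dominant monomials, which is \emph{not} $\exp(o(n))$; so one really must exploit the localization to $A$ — where only a single large index is active — together with the convexity of $z\mapsto\log(\mathbb{E}[|\widehat X|^{z}]/z!)$ to show that the remaining small‑index factors cost only $\exp(o(n))$. Everything else — the $Z_n=\sum_e f(e)$ reduction, the moment estimates for small powers, and the logarithmic linearization — is routine given the lemmas already established.
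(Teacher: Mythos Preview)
Your proposal is correct and follows the paper's route. The paper's own proof is literally the single displayed inequality $T\le Z_n\le O(n^{M_0})\,T$ preceding the proposition; you recover this sandwich from Proposition~\ref{conc_nim} with the honest cardinality bound $|A|=\exp(O(n^{\epsilon_0}(\log n)^2))$ (in fact the paper's polynomial claim is too strong as written, since the $d_j$--windows already contribute $(n^{\epsilon})^m$ with $m$ up to $n^{\epsilon_0}$, but either bound gives $\tfrac1n\log|A|=O(n^{-\epsilon})$ and that is all that is needed).

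For the second identity you do more than the paper, which simply asserts it. Your argument --- lower bound on $T$ via a single--coordinate tuple plus the concentration of Lemma~\ref{phase3}/\ref{phase2}, upper bound on $T$ via Lemma~\ref{decom_1} after observing that the pointwise estimates in the proof of Proposition~\ref{conc_nim} force the maximizer of $f$ to lie in $A$ --- is exactly the right way to fill this gap. One small correction: the $\widehat M_j$ in the paper's setup are only required to be exponentially \emph{sub--dominant}, not sub--threshold, so the claim ``bounded by $1$ outright'' can fail when $|M_j|>1$. However, your convexity bound for the $\widehat K$--factor applies verbatim to any such $\widehat M_j$ with $d_j\le n^{\epsilon}$, giving $\exp(O(n^{\epsilon}))$ per factor; with at most $n^{\epsilon_0}$ factors the product is still $\exp(o(n))$ and the conclusion is unchanged. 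You correctly isolate this upper bound on $T$ as the only non--routine step and correctly explain why the naive product bound $\prod_X\mathbb E[\exp\widehat X]$ is insufficient in the MD setting.
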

\subsection{Above threshold: The structure of Gibbs measure}
We apply the same notation for Hamiltonian above and we assume here that only one dominant term exists for Hamiltonian which will be used later to extend to heavy-tail case. More precisely, the Hamiltonian is 
\[
H_n(\sigma)=\;
\widehat{H}
+
\sum_{i=1}^l \widehat{L}_i
+
\sum_{i=1}^m \widehat{M}_i
\]
where $\widehat{H}=H\sigma_1...\sigma_pn^{-(p-2)/2}$ and $\widehat{L}_i,\widehat{M}_i$ are the same definition as above. In addition, it satisfies
\[
\mathbb{E}\bigl[\exp(\widehat{H})\bigr]
\;>\;
\exp(n^{1-2\epsilon})\,\mathbb{E}\bigl[\exp(\widehat{L}_i)\bigr],
\
\exp(n^{1-2\epsilon})\,\mathbb{E}\bigl[\exp(\widehat{M}_i)\bigr].
\]
\textbf{Remark}: Even without this assumption, we can obtain a similar linear combination result; however, we do not need such a case in this paper. Using the general version, we can observe the short moment when $\beta$ changes. In that short moment, both dominance occurs in the energy landscape and changes naturally to other dominance. This will be provided in our later work.
\\In the above assumptions, we prove this proposition about the limiting spin law and limiting Gibbs measure geometry. Before stating the propositions, we require more refined definitions to handle this limiting spin law in more detail. 
\begin{defn}\label{M+}
    We define $\R_{+}=\{x\in \R| x>0\}$, $\R_{-}=\{x\in \R| x<0\}$ and $M_+(H,p)=\{(\sigma_1,...,\sigma_p)|\sigma_i\in \{-1,+1\},H\sigma_1...\sigma_p>0\}$ and $f_{\pm}(x_1,...,x_p)=\{(\sigma_1,...,\sigma_n)| \sigma_i \in \R_{\text{sgn}(x_i)}\}$.
\end{defn}
Then we can state the following proposition.
\begin{prop}\label{Gibbs_upper}
For $x\in M_+(H,p)$,we have Gibbs measure concentration \[
    \langle\mathbbm{1}_{f_{\pm}(x)}\rangle=\frac{1}{2^{p-1}}(1-O(n^{-\epsilon})) \text{ and }\sum_{x\in M_+(H,p)}\langle \mathbbm{1}_{f_{\pm}(x)}\rangle =1-O(n^{-\epsilon}). 
\]
Moreover, in each region $x\in M_+(H,p)$, we have its conditional Gibbs measure concentration:
\[
    \langle |(\sigma_1,...,\sigma_ p )-\sqrt{t}x|^2|\sigma \in f_{\pm}(x)\rangle = O(n^{-\epsilon}).
\]
  We also have the limiting law:
    \[
        \frac{1}{\sqrt{n}}(\sigma_1,...,\sigma_p)\to [t(\xi_1,...,\xi_p)|H\xi_1...\xi_p>0]
    \]
    where $\xi_i$ is a random variable $\pm 1$ with probability 1/2 and $t=\frac{2\lambda_p( H)}{2p\lambda_p( H)+1}$.
    Morevoer, for two replicas, its restricted overlap satisfies
    \[
        \langle (\frac{1}{n}\sum_{i=p+1}^n\sigma_{i}^1\sigma_{i}^2)^2\rangle=O(n^{-\epsilon})
    \]
    where $\langle \rangle$ means expectation over the Gibbs measure.
\end{prop}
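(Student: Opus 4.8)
The plan is to strip the Hamiltonian down to its single dominant monomial and read the Gibbs geometry off the resulting one–parameter family of spherical integrals. Write $\widehat H=H\sigma_1\cdots\sigma_p\,n^{-(p-2)/2}$ and $V=\sum_i\widehat L_i+\sum_i\widehat M_i$; by the non-intersection property of the NIM model the index set $\{1,\dots,p\}$ is disjoint from every index occurring in $V$. Expanding $\exp(\widehat H+V)$ into monomial powers and invoking Proposition~\ref{conc_nim} (together with its Remark, which permits odd exponents) will concentrate every relevant sum on tuples whose $\widehat H$–exponent $k$ lies in the window $W=[\,2\lambda_p(H)n(1-n^{-\epsilon}),\,2\lambda_p(H)n(1+n^{-\epsilon})\,]$ of Lemma~\ref{phase3}, all exponents coming from $V$ being of size $O(n^{\epsilon})$. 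Hence for any bounded $\phi(\sigma_1,\dots,\sigma_p)$ I expect a reduction
\[
\langle\phi\rangle=\frac{\sum_{k\in W}\tfrac1{k!}\,\mathbb E[\phi\,\widehat H^{k}\rho]}{\sum_{k\in W}\tfrac1{k!}\,\mathbb E[\widehat H^{k}\rho]}\,(1+O(n^{-\epsilon})),
\]
where $\rho$ gathers the surviving low powers of $V$; because $\rho$ involves only coordinates disjoint from $\{1,\dots,p\}$, Lemma~\ref{integral} factorises the joint moments, so the $\rho$–factors cancel from the ratio up to $1+O(n^{-\epsilon})$, and the same reduction holds with $\phi$ replaced by $\phi\,\mathbbm 1_{f_\pm(x)}$.

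For the two orthant masses I would split $S_n$, minus the measure-zero set $\{\exists i\le p:\sigma_i=0\}$, into the $2^{p}$ open sign orthants of $(\sigma_1,\dots,\sigma_p)$. On an orthant with $H\sigma_1\cdots\sigma_p<0$ one has $\widehat H\le 0$, hence $\exp(\widehat H+V)\le\exp V$ there and the Gibbs mass of that region is at most $\mathbb E[\exp V]/Z_n$. Applying Propositions~\ref{conc_nim} and~\ref{NIM_free} to the NIM Hamiltonian $V$ on its own (and the convexity of Lemma~\ref{convexity2}) bounds $\mathbb E[\exp V]\le n^{O(1)}\max_i\bigl(\mathbb E[\exp\widehat L_i]\vee\mathbb E[\exp\widehat M_i]\bigr)$, while the same results give $Z_n\ge n^{-O(1)}\mathbb E[\exp\widehat H]$; the size hypotheses then force the bad–orthant mass to be $\le n^{O(1)}\exp(-n^{1-2\epsilon})=O(n^{-\epsilon})$, which is the identity $\sum_{x\in M_+(H,p)}\langle\mathbbm 1_{f_\pm(x)}\rangle=1-O(n^{-\epsilon})$. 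For the first identity I note that simultaneously flipping the signs of any two of $\sigma_1,\dots,\sigma_p$ leaves $\widehat H$, $V$ and the uniform measure invariant, so $G_n$ is invariant under the group they generate, which acts simply transitively on the $2^{p-1}$ orthants comprising $M_+(H,p)$; each therefore carries a $2^{-(p-1)}$ share of the surviving mass.

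For the conditional law and the limiting spin I would fix $x\in M_+(H,p)$, condition on $f_\pm(x)$ — where $\widehat H=|H\sigma_1\cdots\sigma_p|\,n^{-(p-2)/2}\ge 0$ — and observe that, by the reduction above, for a fixed exponent $k$ the conditional law of $(\sigma_1,\dots,\sigma_p)$ is the uniform measure on $f_\pm(x)\cap S_n$ tilted by $(\sigma_1\cdots\sigma_p)^{k}=n^{pk/2}\prod_{i\le p}(\sigma_i^2/n)^{k/2}$. Realising $\sigma=\sqrt n\,g/|g|$ with $g\sim N(0,I_n)$ exhibits $(\sigma_i^2/n)_{i\le n}$ as $\mathrm{Dir}(\tfrac12,\dots,\tfrac12)$, and the tilt turns it into $\mathrm{Dir}(\tfrac{k+1}2,\dots,\tfrac{k+1}2,\tfrac12,\dots,\tfrac12)$ with $p$ parameters equal to $\tfrac{k+1}2$ (the low powers in $\rho$ shift only $n^{\epsilon_0}$ of the remaining parameters, by $O(n^{\epsilon})=o(n)$). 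From this I read $\mathbb E[\sigma_i^2/n]=\tfrac{k+1}{pk+n}$ and $\mathrm{Var}(\sigma_i^2/n)=O(n^{-1})$; for $k\in W$ the mean equals $t+O(n^{-\epsilon})$ with $t=\tfrac{2\lambda_p(H)}{2p\lambda_p(H)+1}$, and averaging over $k\in W$ preserves both statements. Since $\sigma_i/\sqrt n=x_i\sqrt{\sigma_i^2/n}$ on $f_\pm(x)$, this yields $\langle\,|\tfrac1{\sqrt n}(\sigma_1,\dots,\sigma_p)-\sqrt t\,x|^2\mid f_\pm(x)\rangle\le\sum_{i\le p}\langle(\sqrt{\sigma_i^2/n}-\sqrt t)^2\rangle=O(n^{-\epsilon})$, the stated conditional concentration; together with the equipartition it gives $\tfrac1{\sqrt n}(\sigma_1,\dots,\sigma_p)\to\sqrt t\,\xi$ with $\xi$ uniform on $M_+(H,p)$, i.e. the asserted limiting law.

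The restricted overlap I would handle as in Proposition~\ref{under_overlap}: expanding $\exp(H_n(\sigma^1)+H_n(\sigma^2))$ into monomial powers, a term $\sigma_i^1\sigma_j^1\sigma_i^2\sigma_j^2$ with $i,j>p$ survives the double spherical average only if each coordinate carries even total power in each replica; since $\widehat H$ avoids coordinates $>p$, this forces either $i=j$ or $i\ne j$ with both indices lying in odd–powered — hence $2$–spin — subdominant monomials. The first case contributes $O(n^{-1})$, using that in the concentrated regime $\langle\sigma_i^2\rangle=O(n^{\epsilon})$ for each $i>p$ (Proposition~\ref{conc_nim} caps the $\widehat M$–exponents at $n^{\epsilon}$) while $\sum_{i>p}\langle\sigma_i^2\rangle=O(n)$; the second is bounded exactly as in Proposition~\ref{under_overlap} by $n^{-2}\sum_{\widehat M}(M/(1-M^2)^{3/2})^2=O(n^{-\epsilon})$, the dominant monomial contributing only the common factor $\mathbb E[\exp\widehat H]^2(1+O(n^{-\epsilon}))$ that cancels against $Z_n^2\ge n^{-O(1)}\mathbb E[\exp\widehat H]^2$. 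The step I expect to be hardest is the quantitative control in the conditional–law paragraph: showing that truncating the $\widehat H$–exponent to the width-$n^{-\epsilon}$ window and averaging over it still gives $\mathbb E[\sigma_i^2/n]=t+O(n^{-\epsilon})$ with variance $O(n^{-\epsilon})$ rests on the ratio asymptotics for $\Gamma(z+m)/\Gamma(z)$ (equivalently the Dirichlet parameter identities) being uniform in $k\in W$, and one must also check that the $n^{\epsilon_0}$ subdominant tilts, the removal of the coordinate null set, and the factorisation errors from Lemma~\ref{integral} all remain below $n^{-\epsilon}$; the rest is a transcription of Propositions~\ref{conc_nim}, \ref{NIM_free} and \ref{under_overlap}.
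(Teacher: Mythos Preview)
Your proposal is correct and tracks the paper's own proof closely: both use Proposition~\ref{conc_nim} to localize the $\widehat H$--exponent, bound the bad orthants via $\exp(\widehat H)\le\exp V$ and the gap hypothesis, invoke the pair sign-flip symmetry for equipartition among the $2^{p-1}$ good orthants, and handle the restricted overlap by the parity argument of Proposition~\ref{under_overlap}. Your Dirichlet reformulation for the $\sigma_i^2/n$ moments is a pleasant repackaging---the Dirichlet mean $(k+1)/(pk+n)$ is exactly the ratio $\tfrac{1}{n}\E[\sigma_i^2\widehat H^k]/\E[\widehat H^k]$ that the paper reads off Lemma~\ref{integral}---but not a substantively different route.
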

We adapt a similar representation for the Taylor expansion terms as \[
h(e)= \frac{\widehat{H}^{a}}{a!}
\;\cdot\;
\prod_{i=1}^l \frac{\widehat{L}_i^{b_i}}{b_i!}
\;\cdot\;
\prod_{i=1}^m \frac{\widehat{M}_i^{c_i}}{c_i!} \text{ and }
f(e)
\;=\;
\mathbb{E}\Biggl[
 \frac{\widehat{H}^{2a}}{(2a)!}
\;\cdot\;
\prod_{i=1}^l \frac{\widehat{L}_i^{2\,b_i}}{(2\,b_i)!}
\;\cdot\;
\prod_{i=1}^m \frac{\widehat{M}_i^{2\,c_i}}{(2\,c_i)!}
\Biggr]
\]
where $e=(a,b,c),b=(b_1,...,b_l),c=(c_1,...,c_m)$.
Moreover, the summation of $f$ is concentrated on tuple set 
\[
A=\{\lambda_p(H)n(1-n^{-\epsilon})<a<\lambda_p(H)n(1+n^{-\epsilon}),b=0,c_j\leq  n^{\epsilon} \text{ for all } j\}.
\]
Proposition \ref{conc_nim} implies 
\[
    \sum_{e}f(e)=\sum_{e\in A}f(e)(1+O(n^{-\epsilon})).
\]
Moreover, its replica also satisfies
\[
    \sum_{e,e'} \E[h(e)h(e')]= \sum_{e,e'\in 2A}\E[h(e)h(e')](1+O(n^{-\epsilon})).
\]
We define its restircted overlap as
\[
R_{1,2,p}=\frac{1}{n}\sum_{i=p+1}^n\sigma_{i}^1\sigma_{i}^2.
\]
Our objective value is to understand $\langle R^2_{1,2,p}\rangle$ and $\langle \sigma_i^2\rangle $ for $i=1,2,,,.p$. The basis for understanding this function is to understand $\E[\sigma_i^2\exp(H_n(\sigma))]$ and $\E[\sigma_i\sigma_j\sigma_i'\sigma_j'\exp(H_n(\sigma)+H_n(\sigma'))]$.
We first observe basis of its restricted overlap. The necessary condition to make this basis for overlap satisfies
\[
    \E[\sigma_i\sigma_j\sigma_i'\sigma_j'h(e)h(e')]\neq 0
\]
is $i=j$ and all the elements of $e$ and $e'$ are even or $i\neq j$ and $\widehat{M}_k=M_k\sigma_i\sigma_j$ holds for some k and $c_k,c_k'$ are odd and others are even. This easily yields 
\[
\sum_{e,e'} \E[R_{1,2,p}^2 h(e)h(e')]=\sum_{e,e'\in B} \E[R_{1,2,p}^2h(e)h(e')]+\sum_{k:\widehat{M}_k=M_k\sigma_i\sigma_j} \sum _{e,e'\in C_k}\frac{1}{n^2}\E[\sigma_i\sigma_j\sigma_i'\sigma_j'h(e)h(e')]
\]
where $B=\{(a,b,c)|a_i,b_i,c_i \text{ are even for all } i \}$ and\\ $C_k=\{(a,b,c)|a,b_i,c_j \text{ are even for all }i,j \text{ except } c_k \}$.
\\The left summation consists of the linear combination of \(
\E[\sigma_i^2\sigma_i'^2h(e)h(e')].
\)
The restricted overlap satisfies $|R_{1,2,p}|\leq 1$ and this gives
\begin{align*}
    \sum_{e\in B\text{ and }e\notin 2A \; or\; e'\notin 2A} \E[R_{1,2,p}^2 h(e)h(e')]&\leq \sum_{e\in B\text{ and }e\notin 2A \; or\; e'\notin 2A}\E[h(e)h(e')]\\&=O(n^{-\epsilon})\sum_{e,e'\in B}\E[h(e)h(e')].
\end{align*}
\\The remaining part in $B$ is just when $e,e'\in 2A$.Since $e,e'\in 2A$, we easily have
\[
    {\E[\sigma_i^2\sigma_i'^2h(e)h(e')]}\leq n^2 (n^{\epsilon}/n)^2{\E[h(e)h(e')]}
\]
and
\[
    \E[R_{1,2,p}^2h(e)h(e')]= \E[\frac{\sum_{i=p+1}^n (\sigma_i\sigma_i')^2}{n^2}h(e)h(e')]\leq n^{2\epsilon-1}\E[h(e)h(e')].
\]
In the remaining part, we have
\begin{align*}
    \sum_{k:\widehat{M}_k=M_k\sigma_i\sigma_j} \sum _{e,e'\in C_k}\frac{1}{n^2}\E[\sigma_i\sigma_j\sigma_i'\sigma_j'h(e)h(e')]&\leq \sum_{k:\widehat{M}_k=M_k\sigma_i\sigma_j} \sum _{e,e'\in C_k}\frac{1}{n^2}\E[|h(e)h(e')|]\\&\leq \frac{m}{n^2} \sum_{e,e'} \E[h(e)h(e')]=O(n^{-\epsilon})\sum_{e,e'} f(e)f(e').
    \end{align*}
By summing up all above and we have
\[
\sum_{e,e'} \E[R_{1,2,p}^2 h(e)h(e')]=O(n^{-\epsilon})\sum_{e,e'} f(e)f(e').
\]
This implies
\[
    \langle R_{1,2,p}^2\rangle=O(n^{-\epsilon}).
\]
Let $t=\frac{2\lambda_p(H)}{2\lambda_p(H)p+1}$,
we aim to show the limiting spin law. 
First, we prove
\(
\langle(\frac{\sigma_i^2}{n}-{t})^2\rangle =O(n^{-\epsilon})
\)
for $i=1,...,p$.
To do this, it is enough to calculate $\langle\sigma_i^2\rangle$ and $\langle\sigma_i^4\rangle$. We have 
\begin{align*}
\E[\sigma_i^2\exp(H_n(\sigma))]= \sum_e\E[\sigma_i^2h(e)]= \sum _{e\in 2A}\E[\sigma_i^2h(e)]+\sum_{e\notin 2A}\E[\sigma_i^2h(e)].
\end{align*}
Since we have \[\sum_{e\notin 2A}\E[\sigma_i^2h(e)]\leq nO(n^{-\epsilon})\sum_{e}\E[h(e)]\]
and 
\[
    \sum_{e\in 2A}\E[\sigma_i^2 h(e)]= n\sum_{e\in 2A}\E[h(e)]t(1+O(n^{-\epsilon})),
\]
we have
\[
    \E[\frac{\sigma_i^2}{n}\exp(H_n(\sigma))]=t(1+O(n^{-\epsilon}))\E[\exp(H_n(\sigma))].
\]
Similarly, we have \[\sum_{e\notin 2A}\E[\sigma_i^4h(e)]\leq n^2O(n^{-\epsilon})\sum_{e}\E[h(e)]\]
and 
\[
    \sum_{e\in 2A}\E[\sigma_i^4 h(e)]= n^2\sum_{e\in 2A}\E[h(e)]t^2(1+O(n^{-\epsilon})).
\]
This implies
\[
    \E[(\frac{\sigma_i^2}{n})^2\exp(H_n(\sigma))]=t^2(1+O(n^{-\epsilon}))\E[\exp(H_n(\sigma))].
\]
Above equalities imply 
\[
    \E[(\frac{\sigma_i^2}{n}-t)^2\exp(H_n(\sigma))]=O(n^{-\epsilon})\E[\exp(H_n(\sigma))].
\]
Thus, we have
\[
\langle (\frac{\sigma_i^2}{n}-t)^2\rangle =O(n^{-\epsilon}).
\]
The remaining part is now to show that this really converges to the random variable in the proposition. Due to Proposition \ref{conc_nim}, we have 
\[
\E[\exp(H_n(\sigma))\mathbbm{1}_{H\sigma_1...\sigma_p<0}]\leq \E[\exp (\sum_i \widehat{L}_i+\sum_i \widehat{M}_i)]=O(n^{-\epsilon})\E[\exp(H_n
(\sigma))].
\]
Then the Gibbs measure $G_n$ satisfies 
\[
G_n(\{(\sigma_1,...,\sigma_n)|H\sigma_1...\sigma_p>0\})=1-O(n^{-\epsilon}).
\]
Furthermore, we have Hamiltonian invariance under these transformations:
\(\sigma_i,\sigma_j\to -\sigma_i,-\sigma_j\) and for any $\pi \in S_p$, the transformation $\sigma_1,...,\sigma_p\to \sigma_{\pi(1)},...,\sigma_{\pi(p)}$.
We recall the definition $\R_{+}=\{x\in \R| x>0\}$, $\R_{-}=\{x\in \R| x<0\}$ and $M_+(H,p)=\{(\sigma_1,...,\sigma_p)|\sigma_i\in \{-1,+1\},H\sigma_1...\sigma_p>0\}$ and $f_{\pm}(x_1,...,x_p)=\{(\sigma_1,...,\sigma_n)| \sigma_i \in \R_{\text{sgn}(x_i)}\}$.
Then the invariance above implies that the value $G_n(f_{\pm}(x))$ is the same for every $x\in M_+$. More precisely, we have 
\[
    \E[\exp(H_n(\sigma))\mathbbm{1}_{f_{\pm}(x)}]
\]
is the same for every $x\in M_+$ and this implies
\[
\E[(\frac{\sigma_i^2}{n}-t)^2\exp(H_n(\sigma))\mathbbm{1}_{f_{\pm}(x)}]=O(n^{-\epsilon})\E[\exp(H_n(\sigma))\mathbbm{1}_{f_{\pm}(x)}].
\]
For $x\in M_+$, we also have 
\[
    \E[\exp(H_n(\sigma))\mathbbm{1}_{f_{\pm}(x)}]=\frac{1}{2^{p-1}}(1+O(n^{-\epsilon}))\E[\exp(H_n(\sigma))]. 
\]
and 
\[
    \langle (\frac{\sigma_i^2}{n}-t)^2\mathbbm{1}_{f_{\pm}(x)}\rangle =O(n^{-\epsilon}).   
\]
This implies 
\[
    \langle |(\sigma_1,...,\sigma_ p )-\sqrt{t}x|^2|\sigma \in f_{\pm}(x)\rangle = O(n^{-\epsilon})
\]
for every $x\in M_{+}$ .
If we fix $H$ and $n\to \infty$, the random vector converges in distribution as \[
        \frac{1}{\sqrt{n}}(\sigma_1,...,\sigma_p)\to [\sqrt{t}(\xi_1,...,\xi_p)|H\xi_1...\xi_p>0].
    \]
\section{Extension to heavy-tailed model}
In this section, we extend the result of the NIM model to our original heavy-tail model. We prove this energy landscape by dividing into 5 parts. We first prove that the largest $n^{\epsilon}$ interactions for each $p\leq M\log n $ becomes NIM model since there is no intersection between their spin indices with high probability. After that, we apply H\"older inequality and show other parts are small enough to negligible using heavy tail random matrix theory, coloring, concentration inequality. This extends the result of NIM model to our original heavy-tail model and this is why we call our method NIMR(Non-Intersecting Monomial Reduction).

Before stating more rigorous, we express the Hamiltonian as $\sum_{i,p} \widehat{H}_{i,p}$ where $\widehat{H}_{i,p}=\alpha(p)H_{i,p}\sigma_{i,p}$ and $\sigma_{i,p}=\sigma_{i,p,1}...\sigma_{i,p,p}n^{-(p-2)/2}$ and $H_{i,p}$ is ordered up to their absolute value for each $p$.(i.e., $|H_{1,p}|\geq |H_{2,p}|\geq...$)  
We divide its Hamiltonian into 5 parts as:
\[
H_n^A \;+\; H_n^B \;+\; H_n^C \;+\; H_n^D \;+\; H_n^E
\]
where
\[
\begin{aligned} 
&H_n^A: \text{NIM model,\quad collecting } i \le n^{\epsilon},p\le M\log n \\
&H_n^B: n^\epsilon \le i,\quad p=\text{$2$-spin},\\
&H_n^C: n^{\epsilon} \le i \le n^{\alpha+\epsilon},\quad 3 \le p \le M \log n,\\
&H_n^D: i \ge n^{\alpha + \epsilon},\quad 3 \le p \le M \log n,\\
&H_n^E: p \ge M \log n.
\end{aligned}
\]
We first show that the monomials in $H_n^A$ do not intersect each other with high probability and this becomes the NIM model with high probability. This ensures that we can directly invoke the properties of NIM model in the previous Section. Our strategy to prove the remaining part is to apply the H\"older inequality and divide them into 5 parts. $H_n^A$ part is the NIM model, we apply the SSK model with the heavy tail interaction method to the $H_n^B$ part, we apply the coloring argument to the $H_n^C$ part, we apply the concentration inequality on the sphere to the $H_n^D$ and the remaining $H_n^E$ is negligible using a simple calculation with mixed model properties.  
\subsection{NIM model regime and GSE}
We first define monomial graph. We apply a random graph argument to this monomial graph and show that the monomials in $H_n^A$ do not intersect each other with high probability.
Let us begin with showing that collecting the largest $n^{\epsilon}$ interaction ensures that every monomial is non-intersecting and forms the NIM model.
\begin{defn}[Monomial graph]
    The spin variables $(\sigma_1,...,\sigma_n)$ and subset of power set $\mathcal{I}$ of $\{1,...,n\}$. For any given constants $h_I$ and monomials $h_I\sigma_I=h_I\prod_{i\in I}\sigma_i$ for $I\in \mathcal{I}$, we can construct graph $G=(V,E)$ as $V$ is the set of monomials $\{h_I\sigma_I|I\in \mathcal{I}\}$ and $E=\{(h_I\sigma_I,h_J\sigma_J)|I\cap J\neq \emptyset\}$. We call this construction the monomial graph.  
\end{defn}
Using this definition and the random graph argument, we can prove that the $H_n^A$ part becomes the NIM model in the former section. Furthermore, this definition will be used later for the coloring argument in the next subsection.
\begin{lem}
For $\epsilon<1/2$,
    if we collect $n^{\epsilon}$ interactions with the largest absolute values for each $p$, then all the corresponding monomials are non-intersecting with high probability. More precisely, the probability that an intersection occurs is $O(n^{2\epsilon'-1})$ for every $\epsilon'<\epsilon$.
\end{lem}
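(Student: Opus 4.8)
The plan is a first-moment union bound on the monomial graph, built on an exchangeability reduction. First I would observe that, within each fixed $p$, the couplings $\{H'_{I,p}:|I|=p\}$ are i.i.d., so the (unordered) family $\mathcal{C}_p$ of the $n^{\epsilon}$ index sets carrying the largest values of $|H'_{I,p}|$ is distributed as a uniformly random $n^{\epsilon}$-element subset of $\binom{[n]}{p}$; and since the coupling families for distinct $p$ are independent, the collections $(\mathcal{C}_p)_{2\le p\le M\log n}$ are mutually independent. (If the law of $H$ has atoms, the ordering is made unambiguous by breaking ties with an auxiliary independent uniform randomization, which does not disturb this exchangeability; also $\binom{n}{p}\ge n^{\epsilon}$ for every $p\le M\log n$, so the top-$n^{\epsilon}$ family is well defined.) Thus the monomial graph attached to $H_n^A$ has no edges exactly when all chosen index sets are pairwise disjoint, and it is enough to bound the expected number of intersecting pairs.

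Next I would set $N$ to be the number of unordered pairs $\{(p,I),(q,J)\}$ with $I\in\mathcal{C}_p$, $J\in\mathcal{C}_q$, $2\le p\le q\le M\log n$, $(p,I)\ne(q,J)$ and $I\cap J\ne\emptyset$, and estimate $\P(\text{some monomial pair intersects})\le\E[N]$ via Markov's inequality. For the diagonal terms $p=q$: a uniformly random ordered pair of distinct $p$-subsets satisfies $\P(I\cap J\ne\emptyset)=1-\binom{n-p}{p}/\binom{n}{p}\le 2p^{2}/n$ (valid for $n$ large since $p\le M\log n\ll n$), so this part of $\E[N]$ is at most $\sum_{p}\binom{n^{\epsilon}}{2}\,2p^{2}/n$. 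For the off-diagonal terms $p\ne q$: using independence of $\mathcal{C}_p,\mathcal{C}_q$ and a union bound over coordinates, $\P(I\cap J\ne\emptyset)\le pq/n$, so that part is at most $\sum_{p<q}n^{2\epsilon}\,pq/n$. Summing over $2\le p\le q\le M\log n$ with $\sum_{p\le M\log n}p=O((\log n)^{2})$ gives $\E[N]\le C\,n^{2\epsilon-1}(\log n)^{4}$ for an absolute constant $C$. Since $\epsilon<1/2$, $\E[N]=n^{-(1-2\epsilon)+o(1)}\to 0$, which is $o(n^{-\delta})$ for every $\delta<1-2\epsilon$; this delivers the claimed $O(n^{2\epsilon'-1})$ bound (the subpolynomial factor $(\log n)^{4}$ being negligible against the polynomial gap) and, by Markov, shows that with high probability every chosen monomial is non-intersecting, so that $H_n^A$ is genuinely a NIM model.

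\textbf{Main obstacle.} The computation is essentially bookkeeping, and the one point needing real care is uniformity over the whole range $2\le p\le M\log n$: one must verify that neither the per-pair intersection probability $O((\log n)^{2}/n)$ nor the total pair count $O(n^{2\epsilon}(\log n)^{2})$ produces worse than a polylogarithmic overhead, so that the estimate stays $n^{2\epsilon-1+o(1)}$ and is absorbed into the exponent of the stated bound. A secondary point is making the exchangeability/uniform-subset claim rigorous when $|H'|$ admits ties, which is exactly why the auxiliary tie-breaking randomization is invoked.
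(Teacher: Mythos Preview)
Your proposal is correct and follows essentially the same approach as the paper: a first-moment (Markov) bound on the expected number of edges in the monomial graph, using that the top-$n^{\epsilon}$ index sets in each level are uniformly random and that pairwise intersection probability is $O((\log n)^{2}/n)$, yielding $\E[N]=O(n^{2\epsilon-1}(\log n)^{4})$. If anything, you are slightly more careful than the paper---you justify the uniform-subset reduction via exchangeability and tie-breaking and separate the diagonal/off-diagonal counts---whereas the paper simply asserts the uniform distribution and treats all pairs at once with the crude bound $O((M\log n)^2/n)$; the end estimate is identical.
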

\begin{proof}
    We focus on the regime $p < M \log n$. Let $\widetilde{H}_{n,p}$ be the $p$-spin 
Hamiltonian defined by
\[
  \widetilde{H}_{n,p} 
  \;=\; \sum_{i=1}^{\binom{n}{p}} 
        H_{i,p}\,\sigma_{i,p,1}\,\sigma_{i,p,2}\,\cdots\,\sigma_{i,p, p}
        \;n^{-\frac{p-2}{2}},
\]
where each $H_{i,p}$ is a coefficient, and we define
\[
   \Delta_{i,p} \;=\; 
     \{\sigma_{i,p,1},\,\sigma_{i,p,2},\,\dots,\,\sigma_{i,p,p}\}.
\]
Now consider the following monomial graph construction.  
Define
\[
  G \;=\; (V, E),
\]
where
\[
  V \;=\; 
   \bigl\{\,\Delta_{i,p} \;\big|\; p < M \log n,\; i < n^{\epsilon}\bigr\}
\]
and
\[
  E 
   \;=\; \bigl\{\{\Delta_{i,p}, \Delta_{j,s}\} 
         \;\big|\; \Delta_{i,p} \cap \Delta_{j,s} \neq \emptyset\bigr\}.
\]
For each fixed $p$, the set $\Delta_{i,p}$ is chosen uniformly at random from 
\(\binom{[n]}{p}\).  Then the probability that two such sets intersect is
\[
  \P(\Delta_{i,p} \cap \Delta_{j,s} \neq \varnothing) 
   \;=\; O\!\bigl(\tfrac{(M \log n)^2}{n}\bigr).
\]
Hence the expected number of edges in \(G\) is
\[
  \mathbb{E}[|E|] 
  \;=\; O\!\Bigl(\tfrac{(M \log n)^2}{n}\Bigr) 
        \times (M \log n \, n^{\epsilon})^2 
  \;=\; O\bigl((M \log n)^4 \, n^{2\epsilon-1}\bigr).
\]
If we choose \(\epsilon < \tfrac12\), then \(2\epsilon - 1 < 0\) and $\P(|E|\geq 1)\leq \E[|E|]=O(n^{2\epsilon'-1})$ for any $\epsilon'<\epsilon$.
It follows that with high probability \(E\) is empty and all the sets in \(V\) are pairwise disjoint.
\end{proof}
Furthermore, we need the property that our interaction is not too close to the threshold for below the threshold case and the most dominant interaction and the second dominant interaction are not too close to apply Proposition \ref{Gibbs_upper}.
\begin{lem}
    We have the following with high probability:
    \begin{itemize}
        \item For $A\in \mathcal{F}_1 $, every $p$-interaction is under the $H_p^{*}-n^{-\epsilon}$ for all $p\leq M\log n$.
        \item For $A\in \mathcal{F}(H,p)$, we have \[|H_{1,q}|<f_{q}^{-1}f_p(H)-n^{-\epsilon}, \text{ and } |H_{2,p}|<|H_{1,p}|-n^{-\epsilon}\]
        for all $q\leq M\log n$.
    \end{itemize}
\end{lem}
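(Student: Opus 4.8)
Looking at this final lemma, I need to prove two probabilistic facts: (1) under $\mathcal{F}_1$, all interactions stay bounded away from the threshold $1$ by $n^{-\epsilon}$; (2) under $\mathcal{F}(H,p)$, the dominant interaction is well-separated from the thresholds defined by the other $f_q$'s, and the second-largest $p$-spin coupling is bounded away from the largest. Both should follow from anti-concentration of the heavy-tailed distribution near fixed points, combined with the rigidity of order statistics of heavy-tailed variables (as in the author's earlier work with Lee).

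Let me sketch the plan.

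\textbf{Proof plan.}

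The plan is to reduce each assertion to an anti-concentration estimate for the order statistics of the normalized couplings $H_{i,p} = H'_{i,p} b_{n,p}^{-1}$, then use a union bound over the $O(\log n)$ relevant values of $p$ (recall $p \le M\log n$). First I would recall the key structural fact, essentially from \cite{kim2024fluctuations}: since $b_{n,p}$ is the $1/\binom{n}{p}$-quantile of $|H|$, the largest normalized coupling $|H_{1,p}|$ converges in distribution to a Fréchet-type law supported on $(0,\infty)$, and more importantly its density near any fixed compact set is bounded; the technical monotonicity assumption on $L(x)e^{x^\delta}$ in Definition~\ref{stable law} guarantees $|H_{1,p}| - |H_{2,p}|$ does not concentrate near $0$. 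Quantitatively, for any fixed target value $v$ in a compact set and any small window of width $n^{-\epsilon}$, one has $\P(\,|\,|H_{1,p}| - v\,| < n^{-\epsilon}\,) = O(n^{-\epsilon'})$ for every $\epsilon' < \epsilon$, by regular variation: the number of points $H'_{i,p}$ landing in an interval of length $\Theta(n^{-\epsilon} b_{n,p})$ near $v\,b_{n,p}$ has Poisson-type mean of order $n^{-\epsilon'}$.

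For the first bullet, on the event $\mathcal{F}_1$ we already know $|\beta\alpha(p)H_{1,p}| < H_p^* $ for all $p$; I only need to upgrade this to $|\beta\alpha(p)H_{1,p}| < H_p^* - n^{-\epsilon}$ (the statement says ``under $1-n^{-\epsilon}$'' but the relevant threshold per $p$ is $H_p^*$, with $H_2^* = 1$). The bad event is $\{H_p^* - n^{-\epsilon} \le |\beta\alpha(p)H_{1,p}| < H_p^*\}$, which by the anti-concentration estimate above has probability $O(n^{-\epsilon'})$ for each $p$; summing over $p \le M\log n$ gives $O(n^{-\epsilon'}\log n) = O(n^{-\epsilon''})$. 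For the second bullet, on $\mathcal{F}(H,p)$ we have $f_q(\beta\alpha(q)H_{1,q}) < f_p(H)$ for all $q$; since $f_q$ is continuous and strictly increasing on $\{|x| > H_q^*\}$ (from Definition~\ref{def:threshold}, $f_q(x) = 2\lambda_q(x) - \tfrac12\log(1+2q\lambda_q(x))$ with $\lambda_q$ increasing), the inequality $f_q(\beta\alpha(q)H_{1,q}) < f_p(H)$ is equivalent to $|\beta\alpha(q)H_{1,q}| < f_q^{-1}(f_p(H))$, and I want to push this to a gap of size $n^{-\epsilon}$. The bad event again is a window of width $\asymp n^{-\epsilon}$ around the fixed point $f_q^{-1}(f_p(H))$ (here $H$ is fixed by the conditioning $\mathcal{F}(H,p)$), so anti-concentration gives $O(n^{-\epsilon'})$; union over $q \le M\log n$ finishes it. The separation $|H_{2,p}| < |H_{1,p}| - n^{-\epsilon}$ is exactly the gap-between-top-two-order-statistics bound, which is where the monotonicity hypothesis in Definition~\ref{stable law} is used, precisely as in \cite{kim2024fluctuations}.

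\textbf{Main obstacle.} The routine parts are the union bounds; the real content is the uniform anti-concentration estimate, i.e.\ controlling $\P(|H_{1,p}| \in (v - n^{-\epsilon}, v))$ simultaneously in $p$. The subtlety is that $b_{n,p}$, and hence the limiting law of $|H_{1,p}|$, depends on $p$, and for $p$ growing with $n$ the regular-variation asymptotics are not literally uniform. The hard part will be checking that for $p$ up to $M\log n$ the quantile $b_{n,p}$ still behaves well enough — one uses $\binom{n}{p} \ge (n/p)^p$ so $\log\binom{n}{p} = \Theta(p\log(n/p))$ is large, hence $b_{n,p}$ is large and the local density of the Poisson approximation near a fixed value $v b_{n,p}$ is controlled by $\alpha v^{-\alpha-1} L(v b_{n,p})/L(b_{n,p}) \cdot (\text{window length}/b_{n,p}) \cdot \binom{n}{p}$, which the slowly varying bound and the monotonicity assumption keep of order $n^{-\epsilon'}$ uniformly. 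Once that uniformity is in hand, the rest is a direct union bound.
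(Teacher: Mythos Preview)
The paper does not actually supply a proof of this lemma: it is stated and then immediately used, with the only justification being the earlier Remark~\ref{rem:slow_vary} that the monotonicity assumption on $L(x)e^{x^\delta}$ ``is just used to ensure that the largest terms and the second largest terms are not too close as \cite{kim2024fluctuations}.'' So there is nothing to compare against beyond that one-line pointer to the earlier paper.

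Your sketch is the correct way to fill this gap and is considerably more detailed than anything the paper offers. You have identified the right mechanism in each case: anti-concentration of $|H_{1,p}|$ in a window of width $n^{-\epsilon}$ around a fixed target (via the Poisson/Fr\'echet approximation for the top order statistic), the analogous estimate for the gap $|H_{1,p}|-|H_{2,p}|$ (where the technical hypothesis on $L$ enters, exactly as in \cite{kim2024fluctuations}), and then a union bound over $p\le M\log n$. Your reading of the first bullet --- that ``under $1-n^{-\epsilon}$'' should really be ``under $H_p^*-n^{-\epsilon}$'' for the threshold relevant to each $p$ --- is also the right interpretation given how the lemma is used downstream. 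The uniformity-in-$p$ issue you flag as the main obstacle is real but mild, since there are only $O(\log n)$ values of $p$ and for each of them $b_{n,p}$ is large enough that the regular-variation asymptotics apply; the argument you outline for controlling the local density of the point process uniformly is the standard one.
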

Above two lemmas imply that the model becomes NIM model and it satisfies the condition to propositions in the former section.
Before we extend this to the whole part, we first prove that $H_n^E$ is small enough and we prove Theorem \ref{GSE} which shows ground state energy has $O(n)$ size fluctuation and this ensures that the normalization factor is proper.
Before we prove these, we need more procedures to understand the heavy-tail distributions. This lemma ensures that upper bound of orderd statics of heavy tail distributions with high probability.
\begin{lem}
Let \(X_1,\dots,X_n\) be i.i.d.\ heavy-tailed random variables with exponent \(\alpha\). 
Denote by \(|Y_1| \ge |Y_2| \ge \dots \ge |Y_n|\) their order statistics. \
If $m>n^{\epsilon'}$ and $a>n^{\epsilon}{(n/m)^{1/\alpha}}$ hold for some $\epsilon,\epsilon'>0$ then we have 
\[
    \P(|Y_m|>a)\leq \exp(-cm)
\]
for some $c>0$.
\end{lem}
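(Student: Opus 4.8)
The plan is to turn the order-statistic bound into a binomial tail estimate. Put $p \deq \P(|X|>a)$ and let $N$ be the number of indices $i\le n$ with $|X_i|>a$, so that $N \sim \mathrm{Binomial}(n,p)$. The first step is the elementary identity $\{|Y_m|>a\}=\{N\ge m\}$: the $m$-th largest absolute value exceeds $a$ exactly when at least $m$ of the samples do. A union bound over $m$-element index subsets then gives
\[
  \P(|Y_m|>a)=\P(N\ge m)\le\binom{n}{m}p^m\le\Bigl(\frac{e\,n\,p}{m}\Bigr)^{m},
\]
so it will be enough to show $np\le m/e^{2}$; this yields $\P(|Y_m|>a)\le e^{-m}$, i.e.\ the claim with $c=1$.

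Next I would estimate $p$. By Definition~\ref{stable law}, $p=L(a)\,a^{-\alpha}$ with $L$ slowly varying, and a standard Potter-type bound provides, for each $\delta>0$, a constant $C_\delta$ with $L(a)\le C_\delta a^{\delta}$ for all $a\ge1$; the hypothesis $a>n^{\epsilon}(n/m)^{1/\alpha}\ge n^{\epsilon}$ ensures $a\ge1$, so this applies and $p\le C_\delta a^{-(\alpha-\delta)}$. Substituting the lower bound on $a$ and keeping $0<\delta<\alpha$ gives
\[
  np\le C_\delta\,n^{\,\delta/\alpha-\epsilon(\alpha-\delta)}\,m^{\,1-\delta/\alpha}
     =\bigl(C_\delta\,n^{\,\delta/\alpha-\epsilon(\alpha-\delta)}\,m^{-\delta/\alpha}\bigr)\,m ,
\]
so that $np\le m/e^{2}$ reduces to the single inequality $m^{\delta/\alpha}\ge C_\delta e^{2}\,n^{\,\delta/\alpha-\epsilon(\alpha-\delta)}$.

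The last step is where the hypothesis $m>n^{\epsilon'}$ is spent: it then suffices that $n^{\epsilon'\delta/\alpha}\ge C_\delta e^{2}\,n^{\,\delta/\alpha-\epsilon(\alpha-\delta)}$, that is, that $E(\delta)\deq -(1-\epsilon')\delta/\alpha+\epsilon(\alpha-\delta)$ satisfies $n^{E(\delta)}\ge C_\delta e^{2}$. Since $E(\delta)\to\epsilon\alpha>0$ as $\delta\downarrow0$, the plan is to first fix $\delta=\delta(\alpha,\epsilon,\epsilon')$ small enough that $E(\delta)\ge\epsilon\alpha/2$ --- which in turn fixes $C_\delta$ --- and then take $n$ large, so that the genuine polynomial factor $n^{\epsilon\alpha/2}$ dominates the fixed constant $C_\delta e^{2}$. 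The only point that needs care is exactly this ordering of quantifiers: the Potter exponent $\delta$ must be chosen after $\alpha,\epsilon,\epsilon'$ but before $n$, so that the possibly large constant $C_\delta$ is harmless against the polynomial gain. No finer information about $L$ (such as the monotonicity assumption in Remark~\ref{rem:slow_vary}) should be needed here, since we only ever use a one-sided upper bound on $p$.
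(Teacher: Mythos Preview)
Your proof is correct and follows essentially the same route as the paper: both reduce $\{|Y_m|>a\}$ to a binomial tail $\{B_n\ge m\}$ with $B_n\sim\mathrm{Bin}(n,\P(|X|>a))$ and then show that $n\P(|X|>a)\ll m$ forces an exponential decay in $m$. The only differences are cosmetic: you use the elementary union bound $\binom{n}{m}p^m\le(enp/m)^m$ whereas the paper invokes a Chernoff bound, and you are more explicit about handling the slowly varying function via Potter bounds (a step the paper leaves implicit in the line ``$a>n^{\epsilon}(n/m)^{1/\alpha}$ implies $nP_a\ll m$'').
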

\begin{proof}
Define \(P_a = \mathbb{P}(|X| > a)\). 

\medskip

Then
\[
\mathbb{P}(|Y_m| < a) 
= \sum_{k=0}^{m} \binom{n}{k} P_a^k \bigl(1-P_a\bigr)^{n-k}.
\]

\medskip

Consider the binomial random variable \(B_n \sim \mathrm{Bin}(n,P_a)\). We have 
\[
\mathbb{E}[B_n] = nP_a, 
\quad 
\mathrm{Var}(B_n) = nP_a\bigl(1-P_a\bigr), \quad \mathbb{P}(|Y_m| > a) 
= \mathbb{P}(B_n > m).
\]
Use Chernoff bound and we have 
\[
\P(B_n>m)\leq O(\exp(-c_{\delta}nP_a))
\]
where $\delta=\frac{m-nP_a}{nP_a}$.
This shows \[ \P(|Y_m|>a)\leq \exp(-cm) \] since $a> n^{\epsilon} (n/m)^{1/\alpha}$ and this implies $nP_a\ll m$. 
\end{proof}
As a corollary, we define the event \[A:=\{\text{There exists } i\geq n^{\epsilon},p \text{ such that } |H_{i,p}|\geq n^{\epsilon_0}i^{-1/\alpha}\}.\] Then, we have 
    \[
    \P(A)\leq M\log n \cdot n^{M\log n}\exp(-cn^{\epsilon})\leq \exp(-c'n^{\epsilon}).
    \]
\begin{lem}\label{heavy_upper}
    We have upper bounds with probability:\[
    |H_{i,p}|<n^{\epsilon_0}i^{-1/\alpha}
    \]
    for $i>n^{\epsilon}$.
    More precisely, the probability that this does not hold is less than $\exp(-c'n^{\epsilon})$.
\end{lem}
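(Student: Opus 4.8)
The statement is exactly the assertion that the event $A=\{\exists\,i>n^{\epsilon},\ p:\ |H_{i,p}|\ge n^{\epsilon_0}i^{-1/\alpha}\}$ introduced just above has probability at most $\exp(-c'n^{\epsilon})$, so the plan is to spell out in full the union bound sketched in the lines preceding the statement, using as the sole input the heavy-tailed order-statistic tail estimate proved immediately above (the unnumbered lemma giving $\P(|Y_m|>a)\le\exp(-cm)$ whenever $m>n^{\epsilon'}$ and $a>n^{\epsilon}(n/m)^{1/\alpha}$).

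First I would fix $p$ with $2\le p\le M\log n$ and $i>n^{\epsilon}$. Writing $N_p=\binom{n}{p}$, and recalling that $|H_{i,p}|=|Y^{(p)}_i|/b_{n,p}$, where $|Y^{(p)}_1|\ge\cdots\ge|Y^{(p)}_{N_p}|$ denote the order statistics of the $N_p$ i.i.d.\ raw couplings $\{H'_{I,p}:|I|=p\}$, the event $\{|H_{i,p}|\ge n^{\epsilon_0}i^{-1/\alpha}\}$ is exactly $\{|Y^{(p)}_i|\ge a_{i,p}\}$ with $a_{i,p}:=n^{\epsilon_0}i^{-1/\alpha}b_{n,p}$. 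I would then apply the preceding tail lemma to these $N_p$ variables with $m=i$ and threshold $a=a_{i,p}$, obtaining $\P(|Y^{(p)}_i|\ge a_{i,p})\le\exp(-c\,i)$ for some $c>0$, provided its hypothesis $a_{i,p}>n^{\epsilon'}(N_p/i)^{1/\alpha}$ holds for some $\epsilon'>0$.

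Verifying that hypothesis is the delicate step. By definition $b_{n,p}=\inf\{t:\P(|H|>t)<N_p^{-1}\}$, so $\P(|H|>b_{n,p})\asymp N_p^{-1}$, and combined with regular variation this gives $b_{n,p}=N_p^{1/\alpha}\ell_p$ for a factor $\ell_p$ that varies slowly in the relevant range, whence $a_{i,p}\big/(N_p/i)^{1/\alpha}\asymp n^{\epsilon_0}\ell_p$. I would then invoke Potter-type bounds on $L$, controlled through the monotonicity hypothesis on $L$ in Definition~\ref{stable law} (exactly the use anticipated in Remark~\ref{rem:slow_vary}), to conclude that $\ell_p$ — equivalently $L$ evaluated at $a_{i,p}$ and at $b_{n,p}$ — stays within a fixed power of $n$, so that $a_{i,p}\ge n^{\epsilon'}(N_p/i)^{1/\alpha}$ for some $0<\epsilon'<\epsilon_0$, uniformly over $n^{\epsilon}\le i\le N_p$ and $2\le p\le M\log n$. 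I expect this uniform control of the slowly varying corrections to be the main obstacle: as $p$ runs up to $M\log n$ the scales $N_p$ and $b_{n,p}$ range from polynomial to quasi-polynomial in $n$, and one must keep the $L$-corrections from swallowing the $n^{\epsilon_0}$ slack across that whole sweep.

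Granting the hypothesis, the tail lemma gives $\P(|H_{i,p}|\ge n^{\epsilon_0}i^{-1/\alpha})\le\exp(-c\,i)\le\exp(-c\,n^{\epsilon})$ for every $i>n^{\epsilon}$. Summing over $i$ — the tail $\sum_{i>n^{\epsilon}}\exp(-c\,i)$ is $O(\exp(-c\,n^{\epsilon}))$, and even the crude count $N_p\le n^{M\log n}$ would suffice — and over the at most $M\log n$ values of $p\le M\log n$ (the regime $p\ge M\log n$ being the one absorbed into $H_n^E$ and handled separately), I obtain $\P(A)\le M\log n\cdot n^{M\log n}\exp(-c\,n^{\epsilon})\le\exp(-c'n^{\epsilon})$ for some $c'>0$, the last inequality because $n^{\epsilon}$ dominates $(\log n)^{2}$. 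Passing to the complement event yields exactly the stated bound.
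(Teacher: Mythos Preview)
Your proposal is correct and follows exactly the paper's approach: the paper's entire ``proof'' is the two-line union bound displayed immediately before the lemma, namely $\P(A)\le M\log n\cdot n^{M\log n}\exp(-cn^{\epsilon})\le\exp(-c'n^{\epsilon})$, which you reproduce verbatim after spelling out the per-$(i,p)$ application of the preceding order-statistic tail estimate. Your added discussion of the Potter-type control on the slowly varying factor $\ell_p$ uniformly over $2\le p\le M\log n$ is a point the paper leaves entirely implicit; flagging it as the main obstacle is appropriate, since this is precisely where the technical monotonicity assumption on $L$ from Definition~\ref{stable law} (cf.\ Remark~\ref{rem:slow_vary}) is needed.
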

Using this lemma and a simple probabilistic argument, we can prove the upper bound of $H_n^E$ with high probability.
\begin{prop}\label{upper_E}
For any $M'>0$, if we choose $M$ is large enough, then we have $|H_n^{E}|<n^{-M'}$ with high probability. 
\end{prop}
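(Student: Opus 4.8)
\emph{Proof strategy.} The plan is to bound $H_n^E(\sigma)$ \emph{uniformly over the whole sphere} $S_n$, by combining a Cauchy--Schwarz/Maclaurin estimate on the spin side with a high-probability, uniform-in-$p$ bound on the coefficient sums $\sum_{|I|=p}H_{I,p}^2$. The crux is the following pointwise estimate: for any $\sigma\in S_n$ and any $p$, Cauchy--Schwarz applied to the sum over the $\binom np$ monomials gives
\[
\bigl|H_{n,p}(\sigma)\bigr|=n^{-(p-2)/2}\Bigl|\sum_{|I|=p}H_{I,p}\sigma_I\Bigr|\le n^{-(p-2)/2}\Bigl(\sum_{|I|=p}H_{I,p}^2\Bigr)^{1/2}\Bigl(\sum_{|I|=p}\sigma_I^2\Bigr)^{1/2},
\]
and since $\sum_{|I|=p}\sigma_I^2=e_p(\sigma_1^2,\dots,\sigma_n^2)$ is an elementary symmetric polynomial, Maclaurin's inequality ($p!\,e_p\le e_1^{\,p}$ for nonnegative arguments) bounds it by $(\sum_i\sigma_i^2)^p/p!=n^p/p!$. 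Hence, uniformly over $S_n$,
\[
\bigl|H_{n,p}(\sigma)\bigr|\le\frac{n}{\sqrt{p!}}\Bigl(\sum_{|I|=p}H_{I,p}^2\Bigr)^{1/2}.
\]
The factor $1/\sqrt{p!}$ is super-polynomially small once $p\ge M\log n$, and this is what drives the estimate; note the cruder route (pulling out $\max_I|H_{I,p}|$ times $\sum_{|I|=p}|\sigma_I|$) would \emph{not} suffice near the lower edge $p\approx M\log n$.

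\emph{Heavy-tail input.} Next I would show that, on an event of probability $1-o(1)$, $\sum_{|I|=p}H_{I,p}^2\le n^{C}$ for every $p$, for some constant $C=C(\alpha)$. Split the absolute-value order statistics at rank $n^{\epsilon}$. On the complement of the bad event $A$ of the corollary preceding Lemma~\ref{heavy_upper} one has $|H_{i,p}|<n^{\epsilon_0}i^{-1/\alpha}$ for all $i>n^{\epsilon}$ and all $p$, so $\sum_{i>n^{\epsilon}}H_{i,p}^2<n^{2\epsilon_0}\sum_{i>n^{\epsilon}}i^{-2/\alpha}\le C_\alpha n^{2\epsilon_0}$, the series converging because $\alpha<2$. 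For the top $n^{\epsilon}$ ranks, using $\binom np\,\P(|H|>b_{n,p})\asymp 1$ and the regular variation of the tail of $H$ (Potter's bounds), the union bound $\P(|H_{1,p}|>n^{C_1})\le\binom np\,\P(|H|>n^{C_1}b_{n,p})=n^{-C_1\alpha+o(1)}$, followed by a further union bound over $p\le n$, shows $|H_{1,p}|\le n^{C_1}$ for all $p$ simultaneously with probability $1-n^{1-C_1\alpha+o(1)}$; hence $\sum_{i\le n^{\epsilon}}H_{i,p}^2\le n^{\epsilon+2C_1}$. Combining the two ranges gives the claimed uniform bound with $C=\max\{2\epsilon_0,\ \epsilon+2C_1\}+1$.

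\emph{Assembling.} On the intersection of these events,
\[
\sup_{\sigma\in S_n}\bigl|H_n^E(\sigma)\bigr|\le\sum_{p\ge M\log n}|\alpha(p)|\,\frac{n^{1+C/2}}{\sqrt{p!}}.
\]
By Stirling, $\tfrac12\log(p!)=\tfrac M2\log n\log\log n\,(1+o(1))$ at $p=M\log n$, and $p!$ is increasing, so $\sqrt{p!}\ge n^{(M/2)\log\log n\,(1+o(1))}$ for every $p\ge M\log n$; and $\sum_{p\le n}|\alpha(p)|\le n\sup_p|\alpha(p)|$ using only boundedness of the mixture profile (if $\sum_p|\alpha(p)|<\infty$ this is even cheaper, and if the profile has finite support then $H_n^E\equiv0$ for $n$ large and there is nothing to prove). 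Therefore $\sup_\sigma|H_n^E(\sigma)|\le n^{2+C/2}\,n^{-(M/2)\log\log n\,(1+o(1))}<n^{-M'}$ for $n$ large, which is the assertion; in particular $\E\!\left[\exp(c\beta H_n^E(\sigma))\right]=1+O(n^{-M'})$ as well, which is the form used in the H\"older step of Section~4. (In fact any fixed $M\ge1$ suffices since $\log\log n\to\infty$; we keep the hypothesis ``$M$ large'' as stated for uniformity with the treatment of the other four blocks.)

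\emph{Main obstacle.} The only nonroutine ingredient is the uniform heavy-tail control of $\sum_{|I|=p}H_{I,p}^2$: for $\alpha<2$ these are sums of $\binom np$ i.i.d.\ heavy-tailed squares, dominated by their largest few terms, so the bound genuinely rests on the $b_{n,p}$-normalisation making $\max_I|H_{I,p}|$ of constant order typically and at most $n^{C_1}$ with probability $1-n^{-C_1\alpha+o(1)}$, which must survive a union bound over all $p\le n$ (Lemma~\ref{heavy_upper} handles the contribution of ranks beyond $n^{\epsilon}$). A minor nuisance is the range of $p$ within $O(1)$ of $n$, where $b_{n,p}$ degenerates; this is excluded by the mixture profile, or else disposed of by the trivial deterministic bound $|\sigma_I|\le(n/p)^{p/2}$ weighed against the astronomically small $n^{-(p-2)/2}$, and plays no role. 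Everything else---the Cauchy--Schwarz/Maclaurin step and the summation over $p$---is routine.
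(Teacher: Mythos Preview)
Your argument is correct and follows a genuinely different route from the paper's. The paper uses the crude spin-side bound (effectively $\sum_{|I|=p}\sigma_I^2\le n^p$, no factorial), obtaining only $|H_{n,p}(\sigma)|\le n\bigl(\sum_I H_{I,p}^2\bigr)^{1/2}$, and then makes the sum over $p$ small by invoking the mixture hypothesis $\sum_p\alpha(p)(1+\epsilon)^p<\infty$, which forces $\alpha(p)\le C(1+\epsilon)^{-p}$; the geometric decay of $\alpha(p)$ must beat a mildly growing coefficient bound $(\sum_I H_{I,p}^2)^{1/2}\lesssim (1+\epsilon)^{p/100}$, and this is why $M$ has to be chosen large in terms of~$M'$. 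Your Maclaurin step $e_p(\sigma_1^2,\dots,\sigma_n^2)\le n^p/p!$ manufactures the decay on the spin side instead: the factor $1/\sqrt{p!}=n^{-(M/2)\log\log n\,(1+o(1))}$ at $p=M\log n$ alone kills any polynomial, so you need only that $\alpha(p)$ be bounded (or summable) and, as you note, any fixed $M\ge1$ already works. This is a neater mechanism and yields a slightly sharper statement.

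Two small points of care, neither a gap. First, the paper's Lemma~\ref{heavy_upper} and its preceding corollary are proved with a union bound over $p\le M\log n$ (that is the origin of the $M\log n\cdot n^{M\log n}$ factor there); for your purpose you need the bound $|H_{i,p}|<n^{\epsilon_0}i^{-1/\alpha}$ for \emph{all} $p\le n$, so you should remark that the same Chernoff estimate applies for each $p$ after normalising by $b_{n,p}$, and a union over $p\le n$ only costs an additional factor $n$ against $\exp(-cn^{\epsilon})$. Second, your Potter-bound step $\binom np\,\P(|H|>n^{C_1}b_{n,p})=n^{-C_1\alpha+o(1)}$ uses $L(n^{C_1}b_{n,p})/L(b_{n,p})=n^{o(1)}$ uniformly in $p$; this is fine because $b_{n,p}\to\infty$ and $n^{C_1}\ge1$, but it is worth saying. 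The degenerate range $p$ within $O(1)$ of $n$ that you flag is equally glossed over in the paper's own proof and is ultimately absorbed by the mixture condition.
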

\begin{proof}
For mixed $p$ spin spherical model, we have $\sum_{p\geq 2} \alpha(p)(1+\epsilon)^p<\infty$ for some $\epsilon>0$.
    For each $p$,we define $|H_{1,p}|$ as the largest absolute interaction and we have \[\P(|H_{1,p}|<b_{n,p}m)=(1- L(b_{n,p}m)b_{n,p}^{-\alpha}m^{-\alpha})^{\binom{n}{p}}=\exp(O(-m^{-\alpha}))=1-O(m^{-\alpha}).\]
Hence we have union bound for 
\[\P(|H_{1,p}|<m_p \text{ for every }p\geq M\log n )\geq 1-O(\sum_{p\geq M\log n} m_p^{-\alpha}). \]
If we choose $m_p=(1+\epsilon)^{p/100}$, then $\sum_{p\geq M\log n} m_p^{-\alpha}=O((1+\epsilon)^{-M\log n /100}).$
For large enough $M$,
\[|\sum_{p\geq M \log n } \alpha(p)H_{i,p}\sigma_{i,p}| \leq 
|\sum_{p\geq M\log n } \alpha(p) n (\sum_i H^2_{i,p})^{1/2}|
\]
We have 
\[
\sum_i H_{i,p}^2<\sum_{i\leq n^{\epsilon_0}}m_p^2+\sum_{k>n^{\epsilon_0}}n^{\epsilon_0(\frac{1}{\alpha}-\frac{1}{2})}k^{-\frac{2}{\alpha}}=n^{\epsilon_0}(1+\epsilon)^{p/50}+O(n^{\frac{1}{2}\epsilon_0(\frac{1}{2}-\frac{1}{\alpha}})).
\]
Thus, we have
\[
\sum_{p\geq M\log n }n^{1+\frac{\epsilon_0}{2}}(1+\epsilon)^{p/100}\alpha(p)\leq \sum_{p\geq M\log n } n^{1+\frac{1}{2}\epsilon_0}C(1+\epsilon)^{-\frac{99}{100}p}=O((1+\epsilon)^{-\frac{M}{2}\log n }n^{1+\frac{\epsilon_0}{2}})=O(n^{-M'})
\]
if $M$ is large enough.
\end{proof}
Now we can prove Theorem \ref{GSE}.
\begin{proof}[Proof of Theorem \ref{GSE}]
    We first prove that $\frac{1}{n}\max H_n^A= \max_{p}|\alpha(p)H_{1,p}|p^{-p/2}$. Using AM-GM inequality, we have
    \[|H_{n}^A(\sigma)|=|\sum_{2 \leq p \leq M\log n }\sum_{i=1}^{n^{\epsilon}} \alpha(p)H_{i,p}\sigma_{i,p}|\leq |\sum_{2\leq p \leq M\log n,\; i\leq n^{\epsilon}} \alpha(p) H_{i,p}(\frac{
\sum_{k=1}^p \sigma^2_{i,p,k}}{p})^{p/2}|n^{-(p-2)/2}.\]
Using the fact that \[\sum_{i\leq n^{\epsilon},2\leq p \leq M\log n } \sigma_{i,p,k}^2\leq n \]
and convexity, we have
\[
    |\sum_{2\leq p \leq M\log n,\; i\leq n^{\epsilon}} \alpha(p) H_{i,p}(\frac{
\sum_{k=1}^p \sigma^2_{i,p,k}}{p})^{p/2}n^{-(p-2)/2}|\leq \max _{p}|H_{1,p}\alpha(p)p^{-p/2}|n
\]
and the equality holds only when $|\sigma_{1,p,k}|= \sqrt{n/p}$ for all $k=1,...,p$.
Hence, we have $\frac{1}{n}\max H_n(A)=\max_p |\alpha(p) H_{1,p}|p^{-p/2}$. 
The remaining part is to show that $\sum_{i>n^{\epsilon},p<M\log n }\widehat{H}_{n,p}$ is small enough. Due to Lemma \ref{heavy_upper}, we have $H_{i,p}\leq n^{\epsilon_0}i^{-1/\alpha}$ and this implies $\sum_{i\geq n^{\epsilon}} H_{i,p}^2\leq n^{2\epsilon_0}n^{\epsilon(1-\frac{2}{\alpha})}$ for each $p$. Since $1-2/\alpha<0$, if we choose $\epsilon_0>0$ is small enough, we can ensure that $(\sum_{i\geq n^{\epsilon}} H_{i,p}^2)^{1/2}< n^{-\epsilon'}$ for some $\epsilon'>0$ for all $i\leq M\log n$.
We apply Cauchy-Schwartz inequality and we  have \[
|\sum_{i>n^{\epsilon}}\widehat{H}_{i,p}|\leq \alpha(p) (\sum_{i\geq n^{\epsilon}}H_{i,p}^2)^{1/2}(\sum_{i\geq n^{\epsilon}}\sigma_{i,p,1}^2...\sigma_{i,p,p}^2)^{1/2}n^{-(p-2)/2}\leq \alpha(p)n (\sum_{i\geq n^{\epsilon}}H_{i,p}^2)^{1/2}
\]
since 
\[
    \sum_{i\geq n^{\epsilon}}\sigma_{i,p,1}^2...\sigma_{i,p,p}^2\leq (\sum_{i=1}^n \sigma_i^2)^p=n^p.
\]
Since we have $(\sum_{i\geq n^{\epsilon}} H_{i,p}^2)^{1/2}< n^{-\epsilon'}$, we can prove that 
\[
|\sum_{i>n^{\epsilon}}\widehat{H}_{i,p}|\leq \alpha(p)n (\sum_{i\geq n^{\epsilon}}H_{i,p}^2)^{1/2}< \sum_{p\leq M\log n } \alpha(p)n^{1-\epsilon'}=O(n^{1-\epsilon'}).
\]
Therefore, we have 
\[
    \frac{1}{n}\max |H_n^A|=\max_{p}|\alpha(p)H_{1,p}p^{-p/2}| 
\]
and
\[
    |H_n^B+H_n^C+H_n^D|=O(n^{1-\epsilon'})
\]
and 
\[
    |H_n^E|=O(n^{-M'})
\]
due to Proposition \ref{upper_E}.
Combining all above, we have
\[
    \frac{1}{n}GSE=\max_{p}|\alpha(p)H_{1,p}p^{-p/2}| +O(n^{-\epsilon'}).
\]
\end{proof}
\subsection{The heavy tail matrix regime and the coloring argument}
Now we prove that the part $H_n^B$ is small enough using \cite{kim2024fluctuations}. The main idea of the paper is understanding eigenvalue behavior of the heavy tail random matrix and we apply such idea.
\begin{lem}\label{upper_B}
    Hamiltonian $H_n(\sigma)= n^{\epsilon_0}\sum_{i\geq n^{\epsilon}} H_{i,2}\sigma_{i,2}$ is given, then there is small enough $\epsilon_0$ 
    such that there exists $\epsilon'>0$ which satsifes
    \[\E[\exp(H_n(\sigma))]=1+O(n^{-\epsilon'}).\]
\end{lem}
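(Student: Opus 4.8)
The plan is to read $H_n(\sigma)$ as a quadratic form on the sphere whose matrix has both small Frobenius norm and vanishing trace, and then to evaluate the resulting spherical integral exactly via an inverse-Laplace (Bromwich) representation of the spherical constraint — the same random-matrix philosophy as in \cite{kim2024fluctuations}.

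First I would write $H_n(\sigma)=\sigma^{\top}A\sigma$, where $A$ is the symmetric $n\times n$ matrix with zero diagonal whose $(a_i,b_i)$- and $(b_i,a_i)$-entries equal $\tfrac12 n^{\epsilon_0}H_{i,2}$ for each index $i\ge n^{\epsilon}$ (here $\{a_i,b_i\}$ is the pair carrying the coupling $H_{i,2}$), all other entries being $0$. By Lemma~\ref{heavy_upper}, with probability at least $1-e^{-c n^{\epsilon}}$ one has $|H_{i,2}|\le n^{\epsilon_0}i^{-1/\alpha}$ for all $i> n^{\epsilon}$, so
\[
\|A\|_{F}^{2}\;=\;\tfrac12 n^{2\epsilon_0}\!\!\sum_{i\ge n^{\epsilon}}\!\!H_{i,2}^{2}\;\le\;\tfrac12 n^{4\epsilon_0}\!\!\sum_{i\ge n^{\epsilon}}\!\! i^{-2/\alpha}\;=\;O\!\bigl(n^{\,4\epsilon_0+\epsilon(1-2/\alpha)}\bigr),
\]
the last series converging because $2/\alpha>1$. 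Since $\alpha<2$ the exponent $\epsilon(1-2/\alpha)$ is strictly negative, so choosing $\epsilon_0<\tfrac14\epsilon(2/\alpha-1)$ gives $\|A\|_{F}^{2}=O(n^{-\delta})$ for some $\delta>0$. Writing $\lambda_{1},\dots,\lambda_{n}$ for the eigenvalues of $A$ we thus have $\sum_{i}\lambda_{i}=\tr A=0$, $\sum_{i}\lambda_{i}^{2}=\|A\|_{F}^{2}=O(n^{-\delta})$, and $\max_{i}|\lambda_{i}|\le\|A\|_{F}=O(n^{-\delta/2})$. By rotational invariance of the uniform measure on $S_n$, $\E[\exp(H_n(\sigma))]=\E[\exp(\sum_i\lambda_i\sigma_i^{2})]$.

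Next I would insert the Bromwich representation $\delta(\|x\|^{2}-n)=\frac{1}{2\pi i}\int_{\Gamma}e^{s(n-\|x\|^{2})}\,ds$ with $\Gamma=\{\re s=\tfrac12\}$ — admissible since $\max_i\lambda_i<\tfrac12$ for $n$ large — and carry out the ensuing Gaussian integral, obtaining
\[
\E\Bigl[\exp\Bigl(\textstyle\sum_{i}\lambda_{i}\sigma_{i}^{2}\Bigr)\Bigr]
=\frac{\int_{\Gamma}e^{sn}\prod_{i}(s-\lambda_{i})^{-1/2}\,ds}{\int_{\Gamma}e^{sn}s^{-n/2}\,ds}
=\frac{\int_{\Gamma}e^{sn}s^{-n/2}\,\phi(s)\,ds}{\int_{\Gamma}e^{sn}s^{-n/2}\,ds},
\qquad
\phi(s)=\prod_{i}\Bigl(1-\tfrac{\lambda_{i}}{s}\Bigr)^{-1/2},
\]
with the principal branch, legitimate because each $1-\lambda_i/s$ lies in the right half-plane on $\Gamma$. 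On $\Gamma$ we have $|s|\ge\tfrac12$, hence $|\lambda_i/s|\le 2\max_i|\lambda_i|=O(n^{-\delta/2})<\tfrac12$ for $n$ large; expanding $\log\phi(s)=-\tfrac12\sum_{k\ge1}\tfrac1{k s^{k}}\sum_{i}\lambda_i^{k}$, using $\sum_i\lambda_i=0$ to kill the $k=1$ term, and bounding $|\sum_i\lambda_i^{k}|\le(\max_i|\lambda_i|)^{k-2}\sum_i\lambda_i^{2}$ for $k\ge2$, I get
\[
\sup_{s\in\Gamma}\bigl|\log\phi(s)\bigr|\;\le\;2\sum_{i}\lambda_i^{2}\cdot\frac{1}{1-2\max_i|\lambda_i|}\;=\;O(n^{-\delta}),
\]
so $\phi\equiv 1+O(n^{-\delta})$ uniformly on $\Gamma$. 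Finally $\frac{1}{2\pi i}\int_\Gamma e^{sn}s^{-n/2}\,ds=\frac{n^{n/2-1}}{\Gamma(n/2)}>0$ while $\int_\Gamma|e^{sn}s^{-n/2}|\,|ds|=e^{n/2}\int_{\R}(1/4+t^2)^{-n/4}\,dt$, and by Stirling their ratio is $O(1)$; therefore
\[
\bigl|\E[\exp(H_n(\sigma))]-1\bigr|=\left|\frac{\int_\Gamma e^{sn}s^{-n/2}(\phi(s)-1)\,ds}{\int_\Gamma e^{sn}s^{-n/2}\,ds}\right|\le\sup_{s\in\Gamma}|\phi(s)-1|\cdot\frac{\int_\Gamma|e^{sn}s^{-n/2}|\,|ds|}{\bigl|\int_\Gamma e^{sn}s^{-n/2}\,ds\bigr|}=O(n^{-\delta}),
\]
which is the claim (with $\epsilon'=\delta$); on the excluded event of probability $\le e^{-cn^{\epsilon}}$ the statement is vacuous.

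The one delicate point is uniformity. A priori $\sum_i\lambda_i\sigma_i^2$ can reach $n\max_i|\lambda_i|=O(n^{1-\delta/2})$ for atypical $\sigma$, so a bare variance or low-moment estimate on the centred sum $\sum_i\lambda_i(\sigma_i^2-1)$ does not control the exponential moment. Pushing the Bromwich contour out to $\re s=\tfrac12$ — well to the right of every $\lambda_i$ — is precisely what circumvents this, since it makes $|\lambda_i/s|$ uniformly $O(n^{-\delta/2})$ along the whole contour, so $\phi$ is controlled globally and no separate treatment of atypical configurations is needed. (One could instead work with the Gaussian representation $\sigma=\sqrt n\,g/\|g\|$, $g\sim N(0,I_n)$, but then the unbounded factor $n/\|g\|^{2}$ must be dealt with by hand, so the contour route is cleaner.)
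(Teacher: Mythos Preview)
Your proof is correct and follows essentially the same approach as the paper. Both arguments write $H_n(\sigma)=\sigma^\top A\sigma$, diagonalize, and exploit $\tr A=0$ together with $\|A\|_F^2=O(n^{-\delta})$ (via Lemma~\ref{heavy_upper}); the paper imports the identity $\log\E[\exp(\sigma^\top A\sigma)]=-\tfrac12\sum_i\log(1-2\lambda_i)+O(n^{-\epsilon})$ from \cite{kim2024fluctuations} and Taylor-expands, while you rederive the same spherical integral via the Bromwich contour and bound $\phi(s)-1$ directly --- the underlying mechanism is identical, and your version is simply more self-contained. (One harmless slip: your expansion of $\log\phi(s)$ has the wrong sign, since $-\tfrac12\log(1-x)=+\tfrac12\sum_{k\ge1}x^k/k$; this does not affect the absolute-value bound.)
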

\begin{proof}
Since the Hamiltonain consists of degree 2 monomials in $\sigma_1,...,\sigma_n$, we can consider $n\times n$ symmetric matrix $M_{ij}=\frac{1}{2}f_{ij}$ where $f_{ij}=H_{ij}$ if $H_n(\sigma)$ has $H_{ij}\sigma_{i}\sigma_j$ otherwise $f_{ij}=0$. Then we can apply the same method in the Section 4 (High tempreature regime) of \cite{kim2024fluctuations} and we have
\[
    \log \E[\exp(H_n(\sigma))]= -\frac{1}{2}\sum_{i}\log (1-2\lambda_i )+O(n^{-\epsilon})
\]
for eigenvalues $\lambda_1\geq \lambda_2\geq...\geq \lambda_n $ of $M$.
Due to Lemma \ref{heavy_upper}, we have
$\lambda_i<n^{-\epsilon_1} $ for all $i$ and for some $\epsilon_1>0$.
Then its Taylor expansion implies
\[
    -\frac{1}{2}\log(1-2\lambda_i)=\lambda_i+\lambda_i^2(1+O(n^{-\epsilon
_1}))
\]
and
\[
    \sum_i-\frac{1}{2}\log(1-2\lambda_i)= \sum_i\lambda_i+\sum_{i}\lambda_i^2(1+O(n^{-\epsilon_1}))=\sum_i \lambda_i^2 (1+O(n^{-\epsilon_1}))
\]
since $Tr M=0$.
The last term satisfies \[\sum_i \lambda_i^2=\sum_i M_{ij}^2=\frac{1}{2}\sum_{i\leq j } H^2_{ij}=O(n^{-\epsilon'})\] with high probability.
This shows 
\[
    \log \E[\exp (H_n(\sigma))]=O(n^{-\epsilon'})
\]
and 
\[
    \E[\exp(H_n(\sigma))]=1+O(n^{-\epsilon'}).
\]
\end{proof}
Furthermore, we prove that the $H_n^C$ part is small enough using the coloring argument. To prove this, we require these extremal combinatorics arguments. 

\begin{prop}\label{coloring}
    Consider set $[n]=\{1,2,...,n\}$ and consider graph $G=(V,E)$ where
    $V=\{A|A\subset [n] \text{ and }|A|=p\}$ and $E=\{(A,B)|A\cap B\neq\emptyset\}$. For large enough $n$ and $p\leq M\log n$ and randomly chosen sets $V_1,...,V_{n}\subset V$ with $|V_i|=n^{1-s}$ and for their induced subgraph $G_i$, we have
    \[
    \P( \text{All }G_i \text{ are } \frac{2}{s}+1 \text{ colorable } )>1- n^{-1}.
    \]
\end{prop}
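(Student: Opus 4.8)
The whole argument rests on one elementary fact about $V=\binom{[n]}{p}$: two uniformly random $p$-subsets of $[n]$ are disjoint with probability $\binom{n-p}{p}/\binom{n}{p}=1-\Theta(p^{2}/n)$, so two random vertices of $G$ are adjacent with probability $q=n^{-1+o(1)}$, using $p\le M\log n$. The plan is to show that, except on an event of probability at most $n^{-1}$, every $G_i$ has small degeneracy, and then to colour greedily along a degeneracy order.

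I would first record the structural observation that $G$, and hence each $G_i$, is a union of cliques: for $x\in[n]$ put $V_x=\{A\in V:\ x\in A\}$ and $S_x^{(i)}=V_i\cap V_x$, so that $G_i=\bigcup_{x\in[n]}K_{S_x^{(i)}}$; thus both $\omega(G_i)$ and the degeneracy of $G_i$ are governed by how many of the chosen sets pass through a single element and by how much these cliques overlap. Since $|S_x^{(i)}|$ is stochastically dominated by $\mathrm{Bin}(n^{1-s},p/n)$, of mean $p\,n^{-s}=n^{-s+o(1)}$, a Chernoff estimate gives $\P(|S_x^{(i)}|\ge t)\le (p\,n^{-s})^{t}/t!$, and a union bound over the $n^{2}$ pairs $(i,x)$ shows that, except on an event of probability $\le n^{-2}$, no element lies in more than $t_0=O(1/s)$ of the sets of any $V_i$. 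In the same way $\deg_{G_i}(B)$ is dominated by $\mathrm{Bin}(n^{1-s},\Theta(p^{2}/n))$, again of mean $n^{-s+o(1)}$, so $\P(\deg_{G_i}(B)\ge m)\le (n^{-s+o(1)})^{m}/m!$, and a union bound over the $n\cdot n^{1-s}$ pairs $(i,B)$ already gives that with high probability every $G_i$ has maximum degree $O(1/s)$, hence is $O(1/s)$-colourable by the greedy algorithm.

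To obtain the sharp constant $2/s+1$ one must replace the union bound over single vertices by one over potential dense subgraphs: I would bound the probability that some $G_i$ has an induced subgraph on $v$ vertices of minimum degree exceeding $2/s$, equivalently one carrying more than $v/s$ intersecting pairs among $v$ of the random sets. For a fixed $v$-set $W$ one enumerates the elements witnessing these intersections, using the cap $t_0$ to bound configurations built out of a few common elements; the resulting per-$W$ probability, multiplied by the count $\binom{n^{1-s}}{v}\le n^{(1-s)v}$ of such $W$ and summed over $v$ and over the $n$ indices $i$, turns out to be $o(n^{-1})$ because the exponent is negative and linear in $v$. This shows every $G_i$ is $\lfloor 2/s\rfloor$-degenerate, and a $d$-degenerate graph is $(d+1)$-colourable, so every $G_i$ is $(2/s+1)$-colourable off an event of probability at most $n^{-1}$.

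The hard part is exactly this last enumeration: one has to argue that the cheapest way to force $\Theta(v/s)$ intersecting pairs among $v$ random $p$-sets is either through independent coincidences (cost $n^{-1+o(1)}$ each) or through cliques of size at most $t_0$, so that the per-edge probability cost stays large enough to beat both the entropy $\binom{n^{1-s}}{v}$ of choosing the subgraph and the outer union over the $n$ graphs $G_i$. Keeping this balance tight is what yields the exponent $2/s$ rather than a cruder bound carrying an extra factor of $p$ or a worse constant; the remaining steps are routine first–moment computations and the greedy colouring.
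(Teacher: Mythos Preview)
Your sketch aims at the right target—bounding the degeneracy of each $G_i$ and then colouring greedily—but you have chosen the hard way to witness low degeneracy, and the step you flag as ``the hard part'' is genuinely incomplete. The difficulty you identify is real: in $G_i$ the edge events are positively correlated (three $p$-sets sharing a common element produce three edges for the cost of one coincidence), so a first-moment bound on ``$v$ vertices carrying more than $v/s$ intersecting pairs'' does not factor into a product of single-edge probabilities, and your proposed enumeration over witness elements with the cap $t_0$ is a plan, not a calculation. It can probably be pushed through, but it is not the routine computation you suggest, and your max-degree step alone only gives a bound of order $3/s$ rather than $2/s$.

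The paper sidesteps all of this by exploiting the very lemma that underlies your degeneracy claim: a graph in which every vertex has degree $\ge k$ contains a cycle of length $\ge k+1$, so a graph with no cycle of length exceeding $k$ is $k$-degenerate and hence $(k+1)$-colourable. Thus it suffices to show that with probability $>1-n^{-1}$ no $G_i$ contains a cycle of length exceeding $2/s$. For a fixed ordered $k$-tuple the $k$ intersection events along the cycle \emph{do} factor (reveal the vertices one by one), giving $\P(\text{$k$-cycle})=O((p^{2}/n)^{k})$; a single first-moment line then yields
\[
\sum_i \E\bigl[\#\{k\text{-cycles in }G_i\}\bigr]\;\le\; n\cdot \frac{(n^{1-s})^{k}}{k}\cdot O\!\bigl((p^{2}/n)^{k}\bigr)\;=\;O\!\bigl(p^{2k}n^{1-ks}\bigr),
\]
which is below $n^{-1}$ once $k>2/s$, and summing over $k$ costs nothing extra. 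The cycle structure is precisely what tames the correlations that block your direct dense-subgraph count; in effect, counting long cycles \emph{is} the clean first-moment implementation of the degeneracy bound you were after.
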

To prove this proposition, we first show the following lemmas about upper bound of cycle length which will give the upper bound of the coloring. Using this and counting expected number of cycle, we can prove the following proposition.  
\begin{lem}
\label{lem:degree-cycle}
Let $G$ be a graph in which every vertex has degree at least $k$. Then $G$ contains a cycle of length at least $k+1$.
\end{lem}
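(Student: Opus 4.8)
The plan is to run the classical ``longest path'' argument. Since every vertex has degree at least $k$ (and we may assume $k\ge 2$, as otherwise the statement is vacuous in the usual convention that cycles have length at least $3$), the graph has at least one edge, hence at least one path; as $G$ is finite, we may fix a path $P=v_0v_1\cdots v_\ell$ of maximum length in $G$.

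The first key observation is that every neighbor of the endpoint $v_0$ lies on $P$: if $v_0$ had a neighbor $w\notin\{v_1,\dots,v_\ell\}$, then $wv_0v_1\cdots v_\ell$ would be a strictly longer path, contradicting maximality (note $w\ne v_0$ and $w\ne v_i$ for $i\ge1$ by assumption). Consequently all of the at-least-$k$ neighbors of $v_0$ are among $v_1,\dots,v_\ell$.

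The second step is to pick, among those neighbors, the one with the largest index on $P$, say $v_m$ (so $v_0v_m\in E$). Since $v_0$ has at least $k$ neighbors and all of them sit in $\{v_1,\dots,v_\ell\}$, the largest such index satisfies $m\ge k$. Then $v_0v_1\cdots v_mv_0$ is a cycle in $G$ of length $m+1\ge k+1$, which is what we wanted.

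There is no real obstacle here; the only points requiring a line of care are the finiteness of $G$ (so that a maximum-length path exists) and the bookkeeping that the $k$ neighbors of $v_0$, being distinct vertices all lying in a set indexed by $\{1,\dots,\ell\}$, force the maximal index to be at least $k$. Both are immediate.
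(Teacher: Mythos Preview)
Your argument is correct and is essentially identical to the paper's: both take a longest path, observe that all neighbors of an endpoint must lie on the path, and close off the cycle through the farthest such neighbor. The only difference is cosmetic (you index from $0$, the paper from $1$).
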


\begin{proof}
Consider a longest path in $G$, say $v_1, v_2, \ldots, v_t$.  Because $v_1$ has degree at least $k$, it has a neighbor in $\{v_2,\dots,v_t\}$ that is at distance at least $k$ along the path.  Concretely, there must be some $v_s$ with $s \ge k+1$ such that $(v_1,v_s)$ is an edge.  Hence $v_1, v_2, \ldots, v_s$ form a cycle of length at least $k+1$.
\end{proof}

\begin{lem}
\label{lem:bounded-degree-color}
Let $G$ be a graph such that every subgraph of $G$ contains a vertex of degree at most $k$. Then $G$ is $(k+1)$-colorable.
\end{lem}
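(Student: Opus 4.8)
The plan is a straightforward induction on the number of vertices $|V(G)|$, exploiting the fact that the hypothesis is hereditary. The base case $|V(G)| \le 1$ is immediate, since a graph on at most one vertex is trivially $(k+1)$-colorable (indeed $1$-colorable).

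For the inductive step, I would first apply the hypothesis to $G$ itself --- which is a subgraph of $G$ --- to produce a vertex $v$ with $\deg_G(v)\le k$. Form $G' = G - v$. The key observation is that every subgraph of $G'$ is again a subgraph of $G$, so $G'$ satisfies the same hypothesis; by the induction hypothesis $G'$ admits a proper coloring with $k+1$ colors. To extend this coloring to $v$, note that $v$ has at most $k$ neighbours in $G$, so its neighbours occupy at most $k$ of the $k+1$ available colors; assign $v$ any remaining color. This produces a proper $(k+1)$-coloring of $G$, completing the induction.

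Equivalently, one can phrase the same argument without induction: iteratively delete a vertex of minimum degree (always $\le k$ by applying the hypothesis to the current subgraph) to obtain an ordering $v_1,\dots,v_{|V|}$ in which each $v_i$ has at most $k$ neighbours among $v_{i+1},\dots,v_{|V|}$, then color greedily in the order $v_{|V|},\dots,v_1$; when $v_i$ is reached, at most $k$ of its already-colored (later) neighbours block colors, so one of the $k+1$ colors is free.

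I do not anticipate any genuine obstacle here: the only subtlety worth recording is that the defining property is closed under passing to subgraphs, which is precisely what powers the induction. Note also that Lemma~\ref{lem:degree-cycle} is not needed for this statement; it enters separately, to bound cycle lengths (hence the degeneracy parameter) when one verifies the hypothesis of this lemma inside the proof of Proposition~\ref{coloring}.
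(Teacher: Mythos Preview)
Your proposal is correct and follows essentially the same approach as the paper: induction on $|V(G)|$, remove a vertex of degree at most $k$, color the rest by induction, then extend. The only cosmetic difference is that the paper takes $|V(G)|\le k+1$ as its base case while you take $|V(G)|\le 1$, and your alternative degeneracy-ordering phrasing is a standard equivalent formulation.
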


\begin{proof}
We proceed by induction on $|V(G)|$.  If $|V(G)| \le k+1$, then $G$ can trivially be colored with at most $k+1$ colors, so $\chi(G) \le k+1$.

Now assume the claim holds for all graphs of smaller size.  Suppose $|V(G)| = n$ and let $v$ be a vertex of degree at most $k$ (which exists by hypothesis).  Consider the subgraph $G - v$.  By induction, $G - v$ is $(k+1)$-colorable.  Because $\deg(v) \le k$, vertex $v$ can be assigned one of the $(k+1)$ colors unused in its neighborhood.  Thus $G$ is also $(k+1)$-colorable.
\end{proof}

\begin{lem}
\label{lem:no-long-cycle-color}
If a graph $G$ has no cycle of length $\ge k+1$, then it is $(k+1)$-colorable.
\end{lem}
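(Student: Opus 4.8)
The plan is to deduce Lemma~\ref{lem:no-long-cycle-color} by chaining the two preceding lemmas, with a short \emph{hereditary} observation as the bridge. The crucial point is that the hypothesis ``$G$ has no cycle of length $\ge k+1$'' is inherited by every subgraph: if $H \subseteq G$, then any cycle contained in $H$ is in particular a cycle of $G$, so $H$ also has no cycle of length $\ge k+1$. This is the only genuinely new input needed.

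First I would fix an arbitrary subgraph $H$ of $G$ and argue that $H$ must contain a vertex whose degree \emph{within $H$} is at most $k$. Indeed, suppose not: then every vertex of $H$ has $H$-degree at least $k$, so by Lemma~\ref{lem:degree-cycle} (applied to $H$) the graph $H$ contains a cycle of length at least $k+1$; but that cycle lies in $G$ as well, contradicting the hypothesis. Hence every subgraph of $G$ has a vertex of degree at most $k-1$, and in particular at most $k$. At this point I would simply invoke Lemma~\ref{lem:bounded-degree-color} with parameter $k$: since every subgraph of $G$ contains a vertex of degree at most $k$, the graph $G$ is $(k+1)$-colorable, which is the claim.

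There is no real obstacle here; the proof is a two-line composition of earlier results. The only things to watch are bookkeeping subtleties: when applying Lemma~\ref{lem:degree-cycle} one must use degrees relative to the subgraph $H$ (a vertex may have large degree in $G$ but small degree in $H$), and one should note that the degenerate cases—$H$ edgeless, or $H$ a single vertex—are harmless since then the minimum-degree vertex has degree $0 \le k$. So the ``hard part'' amounts to nothing more than tracking the quantifiers correctly.
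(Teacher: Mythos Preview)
Your proposal is correct and follows exactly the paper's approach: both note that the no-long-cycle hypothesis is hereditary, apply the contrapositive of Lemma~\ref{lem:degree-cycle} to every subgraph, and then invoke Lemma~\ref{lem:bounded-degree-color}. The only difference is that the paper compresses this into a single sentence, while you spell out the quantifier tracking (and in fact obtain the slightly sharper intermediate conclusion that each subgraph has a vertex of degree at most $k-1$).
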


\begin{proof}
Since $G$ has no cycle of length at least $k+1$, the same holds for every subgraph of $G$.  Therefore, by the contrapositive of Lemma~\ref{lem:degree-cycle} and Lemma~\ref{lem:bounded-degree-color}, we conclude $G$ is $(k+1)$-colorable.
\end{proof} 
Now, we are prepared to prove the proposition using the arguments above. 
\begin{proof}[Proof of Proposition \ref{coloring}]
For given two vertices, the probability that it has intersection is $O(p^2/n)$. For each given $k$ vertices $v_1,...,v_k$, the probability that $v_1v_2...v_k$ forms a cycle is $O(p^{2k}/n^k)$. Therefore, the number of length $k-$cycle in $G_1\cup G_2\cup..\cup G_n$ is
\begin{align*}
     \E[\#\text{of length k cycles in }G_1\cup G_2\cup..\cup G_n ]&\leq n\cdot \frac{n^{1-s}(n^{1-s}-1)...(n^{1-s}-k+1)}{k} O(p^{2k}/n^k)\\&=O(p^{2k}n^{1-ks}).
\end{align*}
Since, $p\leq M\log n $, if we choose $k=\frac{2}{s}$, then $O(p^{2k}n^{1-ks})<n^{-1}$ and this also implies that 
\[
\E[\#\text{of length}\geq k \text{ cycles in }G_1\cup G_2\cup..\cup G_n ]\leq \sum_{k\geq 2/s} O(p^{2k}n^{1-ks})<n^{-1}.
\]
Hence, $G_1\cup G_2\cup..\cup G_n$ has no cycle length $>\frac{2}{s}$ with probability $>1-n^{-1}$. This implies it is $\frac{2}{s}+1$ colorable with probability $>1-n^{-1}$.
\end{proof}
This proposition gives upper bound for $H_n^C$.
\begin{prop}\label{upper_C}
There exists small enough $\epsilon_0>0$ such that for every $s<n^{\epsilon_0}$, we have 
    \[\E[\exp (sH_n^C)]<1+n^{-\epsilon'}\]
    for some $\epsilon'>0$.
\end{prop}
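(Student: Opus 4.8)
The plan is to run the colour--Hölder--decompose reduction already used for $H_n^A$ and $H_n^B$, now exploiting that every coefficient entering $H_n^C$ is both small and far below threshold. First I would condition on the high-probability event of Lemma~\ref{heavy_upper} and its corollary, so that $|H_{i,p}|<n^{\epsilon_0}i^{-1/\alpha}$ for all $i>n^{\epsilon}$ and all $p$. Since $i\ge n^{\epsilon}$ on $H_n^C$, every coefficient then obeys $|\alpha(p)H_{i,p}|\le C\,n^{\epsilon_0-\epsilon/\alpha}$, so after multiplication by $s$ and by the colour count $k$ (both $\le n^{\epsilon_0}$) the effective coefficients $sk\,\alpha(p)H_{i,p}$ still tend to $0$, and every monomial of $H_n^C$ lies deep in the subthreshold regime of Lemma~\ref{phase3}. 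All estimates below are on this event, and $\E$ is the expectation over the sphere only, so integrability is never an issue: for fixed disorder $\exp(sH_n^C(\sigma))$ is a bounded function of $\sigma\in S_n$.

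Next I would decompose $H_n^C$ into a few colour classes of pairwise non-intersecting monomials. Form the monomial graph on the vertex set $\{\alpha(p)H_{i,p}\sigma_{i,p}:n^{\epsilon}\le i\le n^{\alpha+\epsilon},\ 3\le p\le M\log n\}$, with an edge between two monomials sharing a spin index. For each fixed $p$ the index sets are a uniformly random family of $p$-subsets of $[n]$, independent across $p$, so the cycle-counting argument behind Proposition~\ref{coloring} (with Lemmas~\ref{lem:degree-cycle}--\ref{lem:no-long-cycle-color}) shows that with probability $1-O(n^{-1})$ the graph has no cycle longer than a fixed constant, hence is properly $k$-colourable with $k\le n^{\epsilon_0}$. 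Fixing such a colouring gives $H_n^C=\sum_{c=1}^{k}H_n^{C,(c)}$, each $H_n^{C,(c)}$ a sum of pairwise non-intersecting monomials.

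Then I would apply Hölder with exponents $k$, $\E[\exp(sH_n^C)]\le\prod_{c}\E[\exp(sk\,H_n^{C,(c)})]^{1/k}$, bound each factor by Lemma~\ref{decompose}, $\E[\exp(sk\,H_n^{C,(c)})]\le\prod_{(i,p)\in c}\E[\exp(sk\,\alpha(p)H_{i,p}\sigma_{i,p})]$, and substitute the subthreshold formula of Lemma~\ref{phase3}, $\E[\exp(sk\,\alpha(p)H_{i,p}\sigma_{i,p})]=1+\tfrac12(sk)^{2}\alpha(p)^{2}H_{i,p}^{2}n^{2-p}(1+O(n^{-\epsilon}))$. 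Since the colouring partitions the monomials, multiplying over each class, taking $1/k$ powers and multiplying over $c$ collapses everything to a single product over all monomials, giving
\[
\E\bigl[\exp(sH_n^C)\bigr]\le\exp\!\Bigl(\tfrac12\,s^{2}k\sum_{p\ge3}\alpha(p)^{2}n^{2-p}\!\!\sum_{i\ge n^{\epsilon}}\!H_{i,p}^{2}\,(1+o(1))\Bigr).
\]
Here $\sum_{i\ge n^{\epsilon}}H_{i,p}^{2}<n^{2\epsilon_0}\sum_{i\ge n^{\epsilon}}i^{-2/\alpha}=O\bigl(n^{2\epsilon_0-\epsilon(2/\alpha-1)}\bigr)$ because $2/\alpha>1$, and $\sum_{p\ge3}\alpha(p)^{2}n^{2-p}\le n^{-1}\sum_{p}\alpha(p)^{2}<\infty$ by summability of the mixture, so the exponent is $O\bigl(s^{2}k\,n^{-1+2\epsilon_0-\epsilon(2/\alpha-1)}\bigr)=O\bigl(n^{-1+5\epsilon_0-\epsilon(2/\alpha-1)}\bigr)$ using $s,k\le n^{\epsilon_0}$. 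Choosing $\epsilon_0$ small, say $\epsilon_0<\tfrac1{10}\min\{1,\epsilon(2/\alpha-1)\}$, this is $\le n^{-\epsilon'}$ for some $\epsilon'>0$, hence $\E[\exp(sH_n^C)]\le\exp(n^{-\epsilon'})=1+O(n^{-\epsilon'})$, uniformly in $s<n^{\epsilon_0}$.

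The hard part is the colouring step when $\alpha\ge 1$ (in particular $\alpha$ close to $2$): then $H_n^C$ carries $\approx n^{\alpha+\epsilon}\gg n$ monomials for each $p$, the intersection graph is dense, and a crude block splitting would cost $\approx n^{\alpha+\epsilon-1}$ colours, which destroys the bound. The remedy I would pursue is to split $H_n^C$ at an intermediate index such as $i=n^{1/2}$: the part $i\le n^{1/2}$ has only $n^{1/2+o(1)}$ monomials overall and is coloured with $O(1)$ colours exactly as above, while the part $i>n^{1/2}$ has all coefficients bounded by $n^{\epsilon_0-1/(2\alpha)}$ and consists of monomials that are mutually orthogonal in $L^{2}(S_n)$ (any two distinct monomials have vanishing expectation), so there I would bound $\E[\exp(s\,\cdot)]$ by a direct even-moment expansion in which each cluster of mutually overlapping monomials is suppressed by a high power of the small coefficient bound, absorbing the combinatorial overcount of clusters. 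Making this moment estimate uniform over $3\le p\le M\log n$ and all $s<n^{\epsilon_0}$ — i.e. verifying that the non-pairing clusters contribute a net negative power of $n$ — is the technical core of the argument.
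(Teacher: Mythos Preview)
Your argument is sound when $\alpha<1$: the total number of monomials in $H_n^C$ is then $O(n^{\alpha+\epsilon}\log n)\ll n$, Proposition~\ref{coloring} applies directly with a constant number of colours, and the H\"older--decompose--subthreshold chain you wrote gives the claim. But for $1\le\alpha<2$ you correctly identify the obstruction and do not resolve it: with $n^{\alpha+\epsilon}\gg n$ monomials per $p$ the intersection graph has unbounded chromatic number, so the claim $k\le n^{\epsilon_0}$ in your main body is simply false there, and the even-moment expansion sketched in your last paragraph faces the same imbalance --- overlapping clusters are suppressed only by powers of $n^{-1/(2\alpha)}$ while their count grows like powers of $n^{\alpha+\epsilon}$, and for $\alpha>1$ the count wins. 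That step is not a detail to be filled in; it is the whole difficulty.

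The paper's missing device is a \emph{weighted} H\"older over blocks. It splits the index range $[n^{\epsilon},n^{\alpha+\epsilon}]$ into $b=n^{\alpha-1+2\epsilon}$ consecutive blocks of length $a=n^{1-\epsilon}$; each block now carries $\ll n$ monomials, so Proposition~\ref{coloring} colours it with a fixed constant $C(\alpha)$ and Lemma~\ref{decompose} factorises it. The blocks are recombined by H\"older with exponents $H_{ak,p}/\sum_{l}H_{al,p}$ proportional to each block's leading coefficient: this amplifies block $k$ by the factor $(\sum_l H_{al,p})/H_{ak,p}$, but since $H_{i,p}\le H_{ak,p}$ inside that block the effective coefficient on any monomial is at most $\sum_{l=1}^{b}H_{al,p}=O(n^{\epsilon_0}a^{-1/\alpha}b^{1-1/\alpha})=O(n^{\alpha-2+O(\epsilon)})$, which stays subthreshold precisely because $\alpha<2$. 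An unweighted H\"older over $b$ blocks would instead amplify every coefficient by $b\approx n^{\alpha-1}$ and push it above threshold for $\alpha$ near $2$; the weighting tied to the decaying order statistics is what rescues the argument.
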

\begin{proof}
    Let \(H_{n,p}^c\) be the $p$-spin part of the Hamiltonian \(H_n^c\).
\\Applying H\"older inequality and we have 
\[
\mathbb{E}\!\Bigl[\exp\!\Bigl(s \sum_{p=2}^{M\log n } H^C_{n,p}\Bigr)\Bigr]
\;\le\;
\max_{p}\;
\mathbb{E}\!\Bigl[\exp\!\bigl(s\,M \log n\,H^C_{n,p}\bigr)\Bigr].
\]
Hence, it is enough to show the upper bound for each $p$. We first divide $H_{n,p}^C$ into two parts using H\"older inequality. 
\[
    \E[\exp(sH_{n,p}^C)]\leq \E[\exp(s(1+n^{-\epsilon})H_{n,p}^{C,1}]^{\frac{n^{\epsilon}}{1+n^{\epsilon}}}\E[\exp(s(1+n^{\epsilon}) H_{n,p}^{C,2}]^{\frac{1}{1+n^{\epsilon}}}
\]
where
\[
    H_{n,p}^{C,1}=\sum_{n^{\epsilon}\leq i<a }\widehat{H}_{i,p} \text{ and } H_{n,p}^{C,2}=\sum_{a< i\leq a(b+1)-1 }\widehat{H}_{i,p}.
\]
For the first part, we have 
\[
    \E[\exp (s H_{n,p}^{C,1})]\leq \prod_{i=n^{\epsilon}}^a \E[\exp(s\widehat{H}_{i,p})]\leq 1+n^{2-p}s^2 \sum_{n^{\epsilon}\leq i <a}{H^2_{i,p}}=1+O(n^{2-p}s^2 n^{\epsilon_0}n^{\epsilon(1-\frac{2}{\alpha})}).
\]
We apply H\"older inequality and we have
\[
\mathbb{E}\!\Bigl[\exp\!\bigl(sH_{n,p}^{C,2}\bigr)\Bigr]
\;\le\;
\prod_{k=1}^{b} \mathbb{E}\!\Biggl[\exp\!\Bigl(\frac{\sum_{l=1}^b H_{al,p}}{H_{ak,p}}s \sum_{i = a\,k}^{\,a(k+1)-1} H_{i,p}\,\sigma_{i,p}\Bigr)\Biggr]^{\frac{H_{ak,p}}{\sum_{l=1}^b H_{al,p}}}.
\]
We set
\[
a \;=\; n^{1 - \epsilon},
\quad
b \;=\; n^{\alpha-1 + 2\epsilon}.
\]
and $\epsilon$ is small enough so that $\alpha-1 + 2\epsilon<1$.
Now we can apply Proposition \ref{coloring} and prove that the monomial graphs of $A_k=\{\sigma_{i,p}| ak\leq i\leq a(k+1)-1\}$ are $C(\alpha)$ colorable. More precisely, for any given Hamiltonian $H_n(\sigma)=\sum_i \widehat{H}_i$, if its monomial graph is $C-$colorable and we define $\mathcal{C}$ to be the set of color and we apply H\"older inequality
\[
    \E[\exp(\sum_i \widehat{H}_i ) ]\leq \prod_{c\in \mathcal{C}}\E[\exp(C\sum_{\widehat{H}_i \text{ is colored as c } }\widehat{H}_i)]^{1/C}\leq \prod_i \E[\exp(C\widehat{H}_i)].
\]
After applying this inequality, we can apply the NIM argument under the threshold as in Proposition \ref{under_free} if $s\frac{\sum_{l=1}^b H_{al,p}}{H_{ak,p}}H_{i,p}\leq s \sum_{l=1}^b H_{al,p}$ is under the threshold. This shows

\begin{align*}
    \mathbb{E}\!\Bigl[\exp\!\bigl(sH_{n,p}^{C,2}\bigr)\Bigr]&\leq \prod_{k=1}^{b}
 \Bigl[
   1
   +
   s^{2}n^{\,2 - p}C(\alpha)^2a \frac{(\sum_{l=1}^b H_{al,p})^2}{H_{ak,p}^2}\sum_{i=ak}^{a(k+1)-1} H^2_{i,p}(1+O(n^{-\epsilon}))
 \Bigr]\\ &\leq \prod_{k=1}^{b}[1+s^2n^{2-p}C(\alpha)^2 aH_{ak,p}\sum_{l=1}^b H_{al,p}(1+o(n^{-\epsilon})]
\\&\leq\exp(s^2n^{2-p}C(\alpha)^2a(\sum_{l=1}^b H_{al,p})^2(1+O(n^{-\epsilon}))).
\end{align*}

Since we have 
\(
    H_{k,p}< k^{-1/\alpha}n^{\epsilon_0}
\) with high probability,
we have
\[
a
 \Bigl(\sum_{l=1}^{b}H_{al,p}\Big)^{2}
 \leq
 a^{1 - \frac{2}{\alpha}}
 n^{\epsilon_0}
 b^{2 - \frac{2}{\alpha}}
 =
 n^{\epsilon_0}
 n^{2\alpha - 3 +\epsilon\bigl(3 - \frac{2}{\alpha}\bigr)}
 <
 n^{1 - \epsilon_{1}}
\]
and 
\[
s\frac{\sum_{l=1}^b H_{al,p}}{H_{ak,p}}H_{i,p}\leq s \sum_{l=1}^b H_{al,p}\leq s n^{\epsilon_0}a^{-1/\alpha}b^{1-1/\alpha}=sn^{\epsilon_0+\alpha-2-\frac{\epsilon}{\alpha}}
\]
for suitably chosen \(\epsilon,\epsilon_0,\alpha\). This concludes that
\[
\mathbb{E}\!\Bigl[\exp\!\bigl(sH_{n}^C\bigr)\Bigr]
 \leq1 +
 O\!\bigl(n^{-\epsilon_{1}/3}\bigr),
\]
provided 
\(\epsilon_{2} < \frac{\epsilon_{1}}{3}\), \(p \ge 3\), and when \(s<n^{\,\epsilon_{2}}\). 
\end{proof}
\subsection{Concentration of the measure on sphere}
The last part is to show that $H_n^D$ is small enough using concentration inequality and conditional expectation procedure.  
\begin{prop}\label{upper_D}
    There exists $\epsilon'''>0$ such that \[
    \E[\exp(sH_n^D)]<1+O(n^{-\epsilon'})
    \]
    holds for all $s<n^{\epsilon'''}$with high probability.
\end{prop}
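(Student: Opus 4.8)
The plan is to control $\E[\exp(sH_n^D)]$ through a direct even-moment expansion on the sphere, exploiting that every coefficient feeding $H_n^D$ is uniformly tiny. First I would separate the degrees: Hölder's inequality applied to $H_n^D=\sum_{3\le p\le M\log n}H_{n,p}^D$ gives
\[
\E[\exp(sH_n^D)]\ \le\ \max_{3\le p\le M\log n}\E\!\bigl[\exp\bigl(sM\log n\,H_{n,p}^D\bigr)\bigr],
\]
so it suffices to bound the right side for a single $p$ with effective parameter $s'=sM\log n=n^{\epsilon'''+o(1)}$. I would then restrict to the high-probability event of Lemma~\ref{heavy_upper}, on which $|H_{i,p}|<n^{\epsilon_0}i^{-1/\alpha}$ for every $i>n^{\epsilon}$. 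Since $i\ge n^{\alpha+\epsilon}$ throughout $H_n^D$ and $\alpha<2$,
\[
v_p:=\alpha(p)^2n^{-(p-2)}\!\!\sum_{i\ge n^{\alpha+\epsilon}}\!\!H_{i,p}^2\ \le\ \alpha(p)^2n^{\,2\epsilon_0-(p-2)}\!\!\sum_{i\ge n^{\alpha+\epsilon}}\!\!i^{-2/\alpha}\ =\ O\!\bigl(n^{\,2\epsilon_0+(\alpha+\epsilon)(1-2/\alpha)-(p-2)}\bigr),
\]
and because $(\alpha+\epsilon)(1-2/\alpha)<0$ while $p-2\ge1$, choosing $\epsilon_0$ small makes $v_p=O(n^{-\theta})$ uniformly in $p$ for some fixed $\theta>0$; moreover every individual coefficient then obeys $|H_{i,p}|<n^{-1+o(1)}$.

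The core step is an estimate of the even moments of $\widetilde F:=H_{n,p}^D$ using only the sphere integration formula of Lemma~\ref{integral}. By that formula $\E[\widetilde F]=0$, and expanding $\widetilde F^{2k}$ over $2k$-tuples of $p$-spin monomials, $\E\bigl[\prod_{l=1}^{2k}\sigma_{\Delta_{i_l}}\bigr]$ vanishes unless each spin occurs with even total multiplicity across the chosen $p$-sets; for the range of $k$ that matters ($2kp=o(n)$, the higher terms being negligible anyway) Lemma~\ref{integral} further reduces that expectation to $\prod_j(m_j-1)!!$ up to a factor $1+o(1)$, where $m_j$ is the multiplicity of spin $j$. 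The pure pair-matching tuples---the $2k$ sets forming $k$ coincident, mutually disjoint pairs---contribute exactly the Wick value $(2k-1)!!\,v_p^{\,k}$. Every other surviving intersection pattern either forces a coincidence or a higher-order overlap among the index sets, shrinking the constrained summation by powers of $n$, or replaces some $H_{i,p}^2$ by a product of distinct coefficients, which the bound $|H_{i,p}|<n^{-1+o(1)}$ and Cauchy--Schwarz dominate; in both cases one gains a factor $n^{-\delta}$ over the pair-matching term. Summing the patterns, the target is
\[
\E[\widetilde F^{2k}]\ \le\ (C_p\,k\,v_p)^{k},\qquad C_p=n^{o(1)},
\]
uniformly in $k$ and in $p\le M\log n$.

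Granting this, I would finish by summing the series: writing $\E[\exp(s'\widetilde F)]=\sum_{m\ge0}\frac{(s')^m}{m!}\E[\widetilde F^{m}]$, the $m=0$ term is $1$, the $m=1$ term vanishes since $\E[\widetilde F]=0$, and for $m\ge2$ one uses $\E[\widetilde F^{2k}]\le(C_pkv_p)^k$ together with $|\E[\widetilde F^{2k+1}]|\le\E[\widetilde F^{2k}]^{1/2}\E[\widetilde F^{2k+2}]^{1/2}$; choosing $\epsilon'''$ small enough that $(s')^2v_p=n^{\,2\epsilon'''-\theta+o(1)}\to0$ makes these terms geometrically summable with the $m=2$ term dominant, so
\[
\E[\exp(s'\widetilde F)]\ \le\ 1+O\bigl((s')^2v_p\bigr)\ =\ 1+O\bigl(n^{\,2\epsilon'''-\theta+o(1)}\bigr).
\]
Setting $\epsilon':=\theta-2\epsilon'''>0$ and undoing the Hölder reduction gives $\E[\exp(sH_n^D)]<1+O(n^{-\epsilon'})$ for all $s<n^{\epsilon'''}$ on the stated high-probability event. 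I expect the main obstacle to be the moment bound of the third paragraph: making the ``Wick term dominates'' principle quantitative and uniform in both $k$ and $p\le M\log n$, i.e.\ controlling the combinatorics of intersecting $p$-subsets and absorbing the sphere corrections $\E[\sigma_i^{2j}]\ne1$ coming from Lemma~\ref{integral}. A convenient device for the latter is to write $\sigma=\sqrt n\,g/\|g\|$ with $g$ standard Gaussian and condition on $\|g\|^2$, which concentrates around $n$, thereby reducing these sphere moments to ordinary Gaussian Wick calculus.
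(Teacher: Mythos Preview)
Your approach diverges from the paper's, and the gap lies exactly where you flag it: the moment bound $\E_\sigma[\widetilde F^{2k}]\le (C_p\,k\,v_p)^k$ with $C_p=n^{o(1)}$ is not attainable for a degree-$p$ multilinear form once $p\ge 3$. Hypercontractivity (equivalently, your Gaussian substitution $\sigma=\sqrt n\,g/\|g\|$) gives only $\E[\widetilde F^{2k}]\le (C\,k^{p}\,v_p)^k$, and this exponent $p$ is sharp: already a single monomial $\sigma_1\cdots\sigma_p\,n^{-(p-2)/2}$ has $2k$-th moment $\sim((2k-1)!!)^p$. Plugging $(k^p v_p)^k$ into the exponential series produces $\sum_k(s^2v_p)^k k^{(p-2)k}$, which diverges for every $s>0$ when $p\ge3$, so no choice of $\epsilon'''$ rescues the argument. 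Your ``Wick term dominates'' heuristic also breaks quantitatively: for $p=3$, $2k=4$ and coefficients of comparable size $a$, the non-pair-matching configurations (four distinct $3$-sets covering six spins twice each) contribute $\Theta(n^4 a^4)$, the \emph{same} order as the pair term $3v_p^2$; for larger $k$ the non-Wick patterns proliferate combinatorially and overtake. The deterministic event of Lemma~\ref{heavy_upper} constrains only the order statistics $|H_{i,p}|$ and cannot enforce the sign cancellations needed to suppress these cross terms.

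The paper sidesteps this entirely by \emph{not} freezing the small disorder. It conditions on $\mathcal F_{ABC}$ (the coefficients in $H_n^A,H_n^B,H_n^C$), under which the remaining $H_{i,p}$ in $H_n^D$ are still conditionally i.i.d.\ with law $H'_p=[H\mid |H|\le H_{n^{\alpha+\epsilon},p}]$. Fubini then gives
\[
\E_H\E_\sigma\bigl[\exp(sH_n^D)\bigm|\mathcal F_{ABC}\bigr]
=\E_\sigma\prod_{i,p}\E_H\bigl[\exp(sH'_p\sigma_{i,p})\bigr]
=\E_\sigma\Bigl[\exp\Bigl(s\E[H'_p]\sum_i\sigma_{i,p}+\tfrac12 s^2\E[H_p'^{2}]\sum_i\sigma_{i,p}^2+\cdots\Bigr)\Bigr],
\]
so the only $\sigma$-polynomial left to control is the symmetric sum $\sum_i\sigma_{i,p}\le(\sigma_1+\cdots+\sigma_n)^p$, for which the $1$-Lipschitz concentration of $n^{-1/2}\sum_j\sigma_j$ on the sphere is exactly the right tool. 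A final Markov step (using $\E_\sigma[\exp(sH_n^D)]\ge1$) turns the annealed bound into the quenched high-probability statement. In short, averaging over $H$ first is what annihilates the off-diagonal products $H_{I_1}\cdots H_{I_{2k}}$ that wreck your quenched moment estimate; a purely deterministic argument on the event of Lemma~\ref{heavy_upper} cannot replicate this cancellation.
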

\begin{proof}

Now we use sightly different notion. Our usual $\E$ which is the expectation over sphere will be expressed as $\E_{\sigma}$ and expectation over $H_{i,p}$'s will be expressed as $\E_{H}$. Moreover, we define $\mathcal{{F}}_{ABC}$ which is event that coefficients in $H_n^A,H_n^B,H_n^C$ are already observed. 
Our objective value is
\[
  \mathbb{E}_H\,\mathbb{E}_\sigma\Bigl[\exp\bigl(sH_n^D\bigr)
  \;\Big|\;\mathcal{F}_{ABC}\Bigr].
\]
Since we can exchange the order due to Fatou's Lemma,
we have 
\[
  \mathbb{E}_H\mathbb{E}_\sigma\Bigl[\exp\bigl(sH_n^D\bigr)
  \;\Big|\;\mathcal{F}_{ABC}\Bigr]=\mathbb{E}_\sigma\mathbb{E}_H\Bigl[\exp\bigl(sH_n^D\bigr)
  \;\Big|\;\mathcal{F}_{ABC}\Bigr]= \mathbb{E}_\sigma\Bigl[\prod_{i,p}
  \E_H\bigl[\exp(s\widehat{H}_{i,p})\big|\mathcal{F}_{ABC}\bigr]\Bigr].
\]
For the convenience, we omit $\mathcal{F}_{ABC}$ and just consider $\E_H$ as this conditional expectation.
If we see the distribution $H_{i,p}$ for all $i\geq n^{\alpha+\epsilon}$ under the condition, the collective distribution is $i.i.d $ distributions with each variable follows conditional distribution $H_p'=[H||H|\leq H_{n^{\alpha+\epsilon},p}] $ where $H$ is the heavy tailed random variable which every interaction follows. 
This implies
\[
\E_H\bigl[\exp(sH_n^D)\big|\mathcal{F}_{ABC}\bigr]=\prod_{i,p}\E_H[\exp(sH_p'\sigma_{i,p})].
\]
For each term, 
\[
\E_H[\exp(sH_p'\sigma_{i,p})]= \sum_{k=0}^{\infty} \frac{1}{k!}\E[s^kH_p'^k]\sigma_{i,p}^k=1+s\E[H_p']\sigma_{i,p}+\frac{1}{2}s^2\E[H_p'^2]\sigma^2_{i,p}(1+O(n^{-\epsilon'})).
\]
The last term is induced from the fact that $sH_p'\sigma_{i,p}=O(n^{-\epsilon'})$ coming from the fact that $\sigma_{i,p}= O(n)$ and Lemma \ref{heavy_upper} implies $H_{p}'\leq n^{-1-\epsilon'}$ and choosing $s< n^{\epsilon''}$ for small enough. Then, we define $H_p^D$ as $p-$spin part of $H_n^D$ and we have 
\begin{align*}
   \E_{\sigma}\E_H[\exp (sH_p^D)]&\leq \E_\sigma[ \exp(s\sum_{i\geq n^{\alpha+\epsilon}}\E[H_p']\sigma_{i,p}+\frac{1}{2}s^2\E[H_p'^2]\sum_{i\geq n^{\alpha+\epsilon}}\sigma_{i,p^2}(1+O(n^{-\epsilon'}))) ]
   \\&\leq \E_\sigma[\exp(s\sum_{i\geq n^{\alpha+\epsilon}}\E[H_p']\sigma_{i,p}+\frac{1}{2}s^2\E[H'^2_{p}]n)]
\end{align*}
Here we need upper bound for $\E[H_p']$ with high probability.
If $\alpha<1$, we have \[
\E[H_p']\leq b_{n,p}^{-1}\cdot O(n^{p(1-\alpha)})\leq n^{\epsilon_0+p(1-\alpha-1/\alpha)}< n^{-p}.
\] 
If $\alpha=1$, we have
\[
    \E[H_p']\leq b_{n,p}^{-1}O(n^{\epsilon'})=O(n^{-p+\epsilon'}).
\]
If $\alpha>1$, we have
\[
    \E[H_p']\leq b_{n,p}^{-1}O(1)=O(n^{-p/\alpha +\epsilon'}).
\]
For any cases, we have 
\[
    \E[H_p']n^{-(p-2)/2}< n^{1-p+\epsilon'}. 
\]
Furthermore, we have
\[
    \E_{\sigma}[\exp(s\sum_{i\geq n^{\alpha+\epsilon}} \E[H_p']\sigma_{i,p})]\leq \E_{\sigma}[\exp(s\E[H_p'](\sigma_1+...+\sigma_n)^p)].
\]
To obtain the upper bound, we use concentration of the measure on the sphere.
Since the function \(\frac{\sum \sigma_i}{\sqrt{n}}\) is 1-Lipshitz, concentration of measure on sphere(Theorem 2.3 in \cite{ledoux2001concentration}) implies
\[
  \P\!\Bigl(\bigl|\sigma_{1} + \sigma_{2} + \dots + \sigma_n\bigr| > m\Bigr)
  \;\le\;
  2\exp\bigl(-\frac{(n-1)m^2}{2n^2}\bigr).
\]
Since it is on sphere, all the event satisfies \(\{\,|\sigma_{1} + \dots + \sigma_n|\le n\}\).  Suppose
\(A < n^{1 - p - \epsilon}\), then
\begin{align*}
  \mathbb{E}\!\bigl[\exp\bigl(A(\sigma_{1}+\dots+\sigma_n)^p\bigr)\bigr]
  &=
  \mathbb{E}\!\Bigl[\exp\!\bigl(A(\sigma_{1}+\dots+\sigma_n)^p\bigr)
    \,\mathbf{1}_{\{|\sigma_{1}+\dots+\sigma_n|<n^{\tfrac12+\epsilon_0}\}}\Bigr]
  \\&+
  \mathbb{E}\!\Bigl[\exp\!\bigl(A(\sigma_{1}+\dots+\sigma_n)^p\bigr)
    \,\mathbf{1}_{\{|\sigma_{1}+\dots+\sigma_n|\ge n^{\tfrac12+\epsilon_0}\}}\Bigr]
    \\&\leq \exp(n^{-\frac{p}{2}-\epsilon+\frac{p\epsilon_0}{2}})+n\exp(-n^{2\epsilon_0})=1+O(n^{-\epsilon'}).
\end{align*}
Therefore, 
\begin{align*}
   \E_{H}\E_{\sigma}[\exp (sH_p^D)]&\leq \E_\sigma[\exp(s\sum_{i\geq n^{\alpha+\epsilon}}\E[H_p']\sigma_{i,p}+\frac{1}{2}s^2\E[H'^2_{p}]n)]
   \\&\leq (1+O(n^{-\epsilon'}))\exp(s^2n^{-1-2\epsilon'})=1+O(n^{-\epsilon''}).
\end{align*}
We also know that $\exp(x)\geq 1+x$ implies $\E_{\sigma}[\exp (sH_p^D)]\geq 1+\E[sH_p^D]=1$. Since the probability of the event $\E_{\sigma}[\exp (sH_p^D)]\geq 1+O(n^{-\epsilon''/2})$ is $O(n^{-\epsilon''/2})$. 
\\This easily implies \[\P(\E_{\sigma}[\exp (sH_p^D)]< 1+O(n^{-\epsilon''/2})\text{ holds for all }p\leq M\log n )=1-O(n^{-\epsilon''/3}).\]
We apply H\"older inequality and we have
\[
\E_{\sigma}[\exp (sH_n^D)]\leq \E[\exp(sM\log n \exp H_p^D)]^{1/M\log n}\leq \max_p\E[\exp(sM\log n \exp H_p^D)].
\]
Hence, if we choose $s< n^{\epsilon'''}$, we can easily prove that
 $\E_{\sigma}[\exp (sH_n^D)]< 1+O(n^{-\epsilon''/2})$ with high probabiilty.
\end{proof}
\subsection{Proof of the main theorems}
Now we are prepared to prove extension to our original heavy tail model. 
\begin{proof}[Proof of Theorem \ref{Free}]
    We apply H\"older inequality and we have 
    \begin{align*}
        \E[\exp(H_n(\sigma))]&\leq \E[\exp (\frac{1}{S_1}H_n^A)]^{S_1}\E[\exp (\frac{1}{S_2}H_n^B)]^{S_2} \E[\exp (\frac{1}{S_3}H_n^C)]^{S_3}\\&\times\E[\exp (\frac{1}{S_4}H_n^D)]^{S_4}\E[\exp (\frac{1}{S_5}H_n^E)]^{S_5}
    \end{align*}
    where \[
        S=\sum_{i=0}^{4}n^{i\epsilon_0} \text{ and }S_i= n^{(4-i)\epsilon_0}/S.
    \]
    Proposition \ref{upper_B},\ref{upper_C},\ref{upper_D},\ref{upper_E} imply that if we choose $\epsilon_0$ small enough, we have \[
    \E[\exp (\frac{1}{S_2}H_n^B)]^{S_2} \E[\exp (\frac{1}{S_3}H_n^C)]^{S_3}\E[\exp (\frac{1}{S_4}H_n^D)]^{S_4}\E[\exp (\frac{1}{S_5}H_n^E)]^{S_5}=1+O(n^{-\epsilon'})
    \]
    for some $\epsilon'>0$.
    This implies
    \[
    \E[\exp(H_n(\sigma))]\leq\E[\exp (\frac{1}{S_1}H_n^A)]^{S_1} (1+O(n^{-\epsilon'})).
    \]
    Furthermore, we apply H\"older with different direction and we have 
 \begin{align*}
        \E[\exp ({S_1}H_n^A)]&\leq \E[\exp(H_n(\sigma))]^{S_1}\E[\exp (\frac{-S_1}{S_2}H_n^B)]^{S_2} \E[\exp (\frac{-S_1}{S_3}H_n^C)]^{S_3}\\&\times\E[\exp (\frac{-S_1}{S_4}H_n^D)]^{S_4}\E[\exp (\frac{-S_1}{S_5}H_n^E)]^{S_5}
        \\&= \E[\exp(H_n(\sigma))]^{S_1}(1+O(n^{-\epsilon'}))
    \end{align*}
    These two inequalities imply that 
    \[
        \E[\exp(H_n(\sigma))]= \E[\exp(H_n^A)](1+O(n^{-\epsilon'})).
    \]
     Moreover, since $1/S_1=1+O(n^{-\epsilon_0})$ we can ensure that $H_n^A/S_1$ has same under the threshold of above threshold condition with $H_n^A$. We apply proposition \ref{under_free},\ref{NIM_free} and this proves the theorem except last part about $\alpha<1$. See Appendix \ref{app:prop} for the proof of the refined fluctuation for $\alpha<1$ case.
\end{proof}
Now we can extend the Gibbs measure structure and limiting spin law for NIM model to the heavy-tail model.
\begin{proof}[Proof of Theorem \ref{Gibbs measure}]
    For bounded function $0\leq f\leq 1$, we can apply H\"older inequality again:
        \begin{align*}
        \E[f\exp(H_n(\sigma))]&\leq \E[f\exp (\frac{1}{S_1}H_n^A)]^{S_1}\E[f\exp (\frac{1}{S_2}H_n^B)]^{S_2} \E[f\exp (\frac{1}{S_3}H_n^C)]^{S_3}\\&\times\E[f\exp (\frac{1}{S_4}H_n^D)]^{S_4}\E[f\exp (\frac{1}{S_5}H_n^E)]^{S_5}
        \\& \leq \E[f\exp (\frac{1}{S_1}H_n^A)]^{S_1}\E[\exp (\frac{1}{S_2}H_n^B)]^{S_2} \E[\exp (\frac{1}{S_3}H_n^C)]^{S_3}\\&\times\E[\exp (\frac{1}{S_4}H_n^D)]^{S_4}\E[\exp (\frac{1}{S_5}H_n^E)]^{S_5}
        \\&=\E[f\exp (\frac{1}{S_1}H_n^A)]^{S_1}(1+O(n^{-\epsilon'})).
    \end{align*}
    Similarly, we have the opposite 
     \begin{align*}
        \E[f\exp ({S_1}H_n^A)]&\leq \E[f\exp(H_n(\sigma))]^{S_1}\E[f\exp (\frac{-S_1}{S_2}H_n^B)]^{S_2} \E[f\exp (\frac{-S_1}{S_3}H_n^C)]^{S_3}\\&\times\E[f\exp (\frac{-S_1}{S_4}H_n^D)]^{S_4}\E[f\exp (\frac{-S_1}{S_5}H_n^E)]^{S_5}
        \\&= \E[f\exp(H_n(\sigma))]^{S_1}(1+O(n^{-\epsilon'})).
    \end{align*}
    This shows
    \[
    \E[f\exp(H_n(\sigma))]=\E[f\exp(H_n^A)](1+O(n^{-\epsilon'})).
    \]
    The overlap function, the restricted overlap, the limiting spin value all satisfy this condition. 
    Therefore, this equality, Proposition \ref{Gibbs_upper} and Proposition \ref{under_overlap} prove the theorem.
\end{proof}
This implies the ultrametricity and ultrametricity breaking phase transition.
\begin{proof}[Proof of Theorem \ref{ultrametric}]
    If all interactions are below the threshold, the overlap is zero, which implies ultrametricity.
    For the case where the dominant interaction surpasses the threshold, we express three replicas $\sigma^1,\sigma^2,\sigma^3$ as $\sigma^i=\frac{1}{\sqrt{n}}(\sigma^i_1,...,\sigma^i_p,\sigma^i|)$ where $H\sigma_1...\sigma_p\alpha(p)n^{-(p-2)/2}$ is the dominant interaction in the Hamiltonian. Due to Theorem \ref{Gibbs measure}, we know its geometry and $t$ is the value in the theorem.
    \begin{itemize}
        \item $p=2$: The only possible values for the overlap are just $2t$ and $-2t$. Hence, it satisfies ultrametricity.
        \item $p=3$: The only possible values for the overlap are just $3t$ and $-t$. Hence, it satisfies ultrametricity.
        \item $p\geq 4$: First, we choose just one replica $\sigma^1$.
        Due to Theorem \ref{Gibbs measure}, \[
        \P(|\frac{1}{\sqrt{n}}(-\sigma^1_1,-\sigma^1_2,\sigma^1_3,...,\sigma^1_p)-\frac{1}{\sqrt{n}}(\sigma^2_1,...,\sigma^2_p)|\leq n^{-\epsilon} )> \frac{1}{2^{p-1}}-\epsilon.
        \]
        and \[
        \P(|\frac{1}{\sqrt{n}}(-\sigma^1_1,-\sigma^1_2,-\sigma^1_3,-\sigma^1_4,\sigma_5^1,...,\sigma^1_p)-\frac{1}{\sqrt{n}}(\sigma^3_1,...,\sigma^3_p)|\leq n^{-\epsilon} )> \frac{1}{2^{p-1}}-\epsilon.
        \]
        The probability of choosing three replicas as above is $>(\frac{1}{2^{p-1}}-\epsilon)^2>0$ and for this case the overlaps are
        \[
        R_{1,2 }\text{ and } R_{2,3}=t(p-4)+O(n^{-\epsilon}),\;  R_{1,3}= t(p-8)+O(n^{-\epsilon}).
        \]
        For this case, $R_{1,3}<\min(R_{1,2},R_{2,3})$ holds with probability $>(\frac{1}{2^{p-1}}-\epsilon)^2>0$. This shows the breakdown of ultrametricity.
    \end{itemize}
\end{proof}
\begin{proof}[Proof of Theorem \ref{prob_land}]
We already know that $|H_{1,p}|$ for each $p$ converges to a Fr\'echet distribution $X_p$. Hence, finding the dominant interaction is to find $p$ to maximize $f_p(\beta\alpha(p) X_p)$. The limiting Gibbs measure and the free energy are just functions of i.i.d Fr\'echet random variables $X_p$. Therefore, Theorem \ref{Free} and Theorem \ref{Gibbs measure} easily prove this theorem. Furthermore, the probability of the event $\mathcal{F}_1$ converges to
\[
    \P(\mathcal{F}_1)= \prod_{p\geq 2} \P(|\beta \alpha(p)X_p|<H_p^*)
\]
and the free energy converges in distribution to 
\[
F_n\to \max_p f_p(\beta \alpha(p)X_p).
\]

\end{proof}

\appendix
\section{Proofs of Technical Lemmas}\label{app:proof}
\begin{proof}[Proof of Lemma \ref{integral}]
Define
\[
J_{i_1,\dots,i_k}(r)=\int_{\{\sigma\in\mathbb{R}^n: \sum_{i=1}^n \sigma_i^2=r^2\}}\sigma_1^{\,i_1}\cdots \sigma_k^{\,i_k}\,\frac{1}{(2\pi)^{n/2}} \exp\Bigl(-\frac{1}{2}\sum_{i=1}^n\sigma_i^2\Bigr)\,d\sigma.
\]
A change of variable shows that
\[
J_{i_1,\dots,i_k}(r)= r^{\,n-1+i_1+\cdots+i_k}\exp\Bigl(-\frac{r^2-1}{2}\Bigr)J_{i_1,\dots,i_k}(1).
\]
Thus, integrating with respect to $r$, we have
\[
J_{i_1,\dots,i_k}(1)\,\frac{e^{1/2}}{2}\,\mathbb{E}\Bigl[|X|^{\,n-1+i_1+\cdots+i_k}\Bigr]
=\int_0^\infty r^{\,n-1+i_1+\cdots+i_k}\exp\Bigl(-\frac{r^2-1}{2}\Bigr)J_{i_1,\dots,i_k}(1)\,dr
=\int_0^\infty J_{i_1,\dots,i_k}(r)\,dr.
\]
On the other hand, by a standard computation this integral equals
\[
\prod_{t=1}^k \mathbb{E}\Bigl[X^{\,i_t}\Bigr].
\]
Finally, observing that the expectation of $\sigma_1^{i_1}\cdots \sigma_k^{i_k}$ under the uniform measure on the sphere is given by
\[
\mathbb{E}\bigl[\sigma_1^{i_1}\cdots \sigma_k^{i_k}\bigr]=\frac{J_{i_1,\dots,i_k}(\sqrt{n})}{J_{0}(\sqrt{n})},
\]
we deduce that
\[
\mathbb{E}\bigl[\sigma_1^{i_1}\cdots \sigma_k^{i_k}\bigr]
=\frac{n^{\frac{1}{2}\sum_{t=1}^k i_t}\,\mathbb{E}[|X|^{n-1}]\,\mathbb{E}[X^{i_1}]\cdots\mathbb{E}[X^{i_k}]}
{\mathbb{E}\bigl[|X|^{\,i_1+\cdots+i_k+n-1}\bigr]}.
\]
Similarly, we have 
\[
\mathbb{E}\bigl[|\sigma_1|^{i_1}\cdots |\sigma_k|^{i_k}\bigr]
=\frac{n^{\frac{1}{2}\sum_{t=1}^k i_t}\,\mathbb{E}[|X|^{n-1}]\,\mathbb{E}[|X|^{i_1}]\cdots\mathbb{E}[|X|^{i_k}]}
{\mathbb{E}\bigl[|X|^{\,i_1+\cdots+i_k+n-1}\bigr]}.
\]
\end{proof}
\begin{proof}[Proof of Lemma \ref{exp_basic}, \ref{phase3}, \ref{phase2}]
Its Taylor expansion implies
\[
    \E(\exp( H \sigma_1...\sigma_p n^{-(p-2)/2}))=\sum_{l=0}^{\infty} \frac{1}{(2\ell)!} \frac{\E\{X^{2\ell}\}^p\E\{|X|^{n-1}\}}{\E\{|X|^{2\ell p+n-1}\}}(Hn)^{2\ell}.
\]
We define $\widehat{H}= H \sigma_1...\sigma_p n^{-(p-2)/2} $.
\\Since the moment of the standard normal variable is
\[
\E[|X|^k] = \frac{2^{k/2}}{\sqrt{\pi}} \Gamma\left(\frac{k+1}{2}\right),
\]
and 
\[
    \ell!= \Gamma(\ell+1),
\]
if we define
\[
    f(\ell)=\log (\frac{( |H n|)^{2\ell}}{\Gamma(2\ell+1)}\frac{\Gamma(\ell+\frac{1}{2})^{p}\Gamma(\frac{n}{2})}{\pi^{p/2}\Gamma(\ell p+\frac{n}{2})})
\]
for non-negative real number $l$,then it is well defined and also it satisfies 
\[
    f(\ell)=\E[\frac{|\widehat{H}|^{2\ell}}{(2\ell)!}]
\]
$l \in \mathbb{N}$ or $\mathbb{N}+\frac{1}{2}$.
\\Furthermore, Stirling's approximation implies
\[
    \log \Gamma(z)= z\log z -z+ \frac{1}{2}\log \frac{2\pi}{z}+O(\frac{1}{z})
\]
and 
\[
    \log \Gamma(z+1)=z\log z-z+\frac{1}{2}\log 2\pi z +O(z^{-1}).
\]
This also gives its \( k \)-th moment approximation
\[
    \log\E[|X|^k]= \frac{k}{2}\log(k+1)+\frac{1}{2}\log 2-\frac{k+1}{2}+O(\frac{1}{k}).
\]
 Then this gives asymptotics for each term of the expansion as \begin{align*}
    f(\ell)&=\log (\frac{( H n)^{2\ell}}{(2\ell)!}\frac{\E\{X^{2\ell}\}^p\E\{|X|^{n-1}\}}{\E\{|X|^{2\ell p+n-1}\}})\\&= 2\ell \log(H n )-(2\ell \log(2l)-2\ell+\frac{1}{2}\log(4\pi \ell))\\&+p\{\ell \log(2\ell+1)+\frac{1}{2}\log2 -\frac{2\ell+1}{2}+O(\ell^{-1})\}+\frac{n-1}{2}\log n +\frac{1}{2}-\frac{n}{2}+O(n^{-1})
    \\&-(\frac{2\ell p+n-1}{2}\log(2\ell p+n)+\frac{1}{2}\log2-\frac{2\ell p+n}{2}),
\end{align*}
and 
\[
    f(\ell+1)-f(\ell)=2\log(Hn)- \log(2\ell+1)(2\ell+2)+p\log(2\ell+1)-\log(2\ell p+n+2\cdot 1)\cdots(2\ell p+n+2(p-1)).
\]
If $\ell\ll n$, this function is always negative, which means $f$ is decreasing in that region.
If we put $\ell$ to $cn$, we have
\begin{align*}
        &f(cn)= 2cn \log( H n )-(2cn \log(2cn)-2cn+\frac{1}{2}\log(4\pi cn)) \\&+p\{cn \log(2cn+1)+\frac{1}{2}\log2 -\frac{2cn+1}{2}\}+\frac{n-1}{2}\log n +\frac{1}{2}-\frac{n}{2}
    \\&-(\frac{2pcn+n-1}{2}\log(2pcn+n)+\frac{1}{2}\log2-\frac{2pcn+n}{2})+O(n^{-1}).
\end{align*}
We have asymptotic
\[ 
    cn \log(2cn+1)= cn\log(2cn)+ \frac{1}{2}+O(n^{-1}),
\]
and putting it above and simplifying, we have
\begin{align*}
       f(cn)&=n\{2c\log(H) -2c\log 2c +2c +pc \log 2c -\frac{2pc+1}{2}\log(2pc+1)\}-\frac{1}{2}\log n 
       \\&+\{\frac{p-1}{2}\log 2-\frac{1}{2}\log(\pi c)+\frac{1}{2}+\frac{1}{2}\log(2pc+1)\}+O(n^{-1}),
\end{align*}
and let $g(c)$ be the coefficient of the \( n \)-th term 
\[
    g(c)=2c\log( H)-2c\log(2c)+2c+pc\log 2c -\frac{2pc+1}{2}\log(2pc+1).
\]
We express other remaining terms except $g(c)n$ in $f(cn)$ as $\square$. Then, we have 
\[
    f(cn)=g(c)n+ \square.
\]
For large \( n \), the most influential term is $g(c)$. 
Consider its derivative
\[
    g'(c)=2\log( H) +(p-2)\log (2c)-p\log(2pc+1)    
\]
and second derivative 
\[
    g''(c)= \frac{p-2-4pc}{c(2pc+1)}.
\]
For $p\geq 3$, $g'$ has its maximum at $c=\frac{p-2}{4p}$ and 
\[
    g'(\frac{p-2}{4p})=2\log( H)+(p-2)\log (\frac{p-2}{2p})-p\log(\frac{p}{2})=\log((2 H)^2\frac{(p-2)^{p-2}}{p^{2p-2}}).
\]
Here we have two cases:
\begin{itemize}
    \item If $g'(\frac{p-2}{4p})<0$, then \( g \) is a decreasing function and $\ell=0,1$ terms become the dominant terms. 
    \item If $g'(\frac{p-2}{4p})>0$, then $g'(h)=0$ has two solutions $h_{-}< \frac{p-2}{4p}< h_{+}$ and \( g \) has its maximum at $h_+$.
\end{itemize}
For the second case, we need to distinguish further
\begin{itemize}
    \item  If $g(h_+)<0$, then $\ell=0,1$ term is dominant as above.
    \item   If $g(h_{+})>0$, 
\[
e^{ng(h_{+})}<\E(\exp( H \sigma_1...\sigma_p n^{-(p-2)/2}))<n((h_{+}+1))e^{ng(h_{+})}+O(1)e^{ng(h_{+})}
\]
and we have
\[
    \frac{1}{n}\log \E(\exp( H \sigma_1...\sigma_p n^{-(p-2)/2}))= g(h_{+})+O(\frac{\log n }{n}).
\]
\end{itemize}
This shows phase transition between as the value of $g(h_+)$. Here we can define 
$\lambda_p(H)$ to be $h_+$ and the solution of $g(h_+)=0$ to be $H_p^*$ and $f_p$ to be $g(h_+)$.
Now we prove remaining properties in the regime $g(h_+)>0$.
We put $c=h_+$ and recall the expression
\(f(cn)=ng(c)+\square\).
We utilize Taylor's approximation to here and we have
\begin{align*}
    f(cn+\triangle)&= ng(c+\frac{\triangle}{n})+\square
    \\&=n(g(c)+\frac{\triangle}{n}g'(c)+(\frac{\triangle}{n})^2g''(c)+O(g'''(c)(\frac{\triangle}{n})^3))+\square
    \\&= f(cn)+\triangle g'(c)+\frac{\triangle^2}{n}g''(c)+O(\frac{\triangle^3}{n^2}).
\end{align*}
Since we have $g'(c)= O(1/n)$ and $g''(c)=O(1)$,
we have 
\[
    f(cn+\triangle)= f(cn)+ O(\frac{\triangle}{n})+g''(c)\frac{\triangle^2}{n}
\]
for $\triangle\ll n$.
This gives the asymptotics 
\[
    \sum_{\triangle=-m}^m \exp(f(cn+\triangle))=\exp(f(cn))\sum_{\triangle=-m}^m \exp(g''(c)\frac{\triangle^2}{n}+O(\frac{\triangle}{n})).
\]
For $\sqrt{n}\ll m\ll n $,
this approximation is \[\sqrt{\frac{2\pi n}{g''(c)}}\exp(f(cn)).\]
Other remaining part is small enough and negligible. 
Moreover, we have 
\begin{align*}
    &\sum_{\triangle=-m}^{m}[\exp (f(cn+\triangle+1/2))-\exp(f(cn+\triangle))]
    \\&\;\; =\sum_{\triangle=-m}^{m}\exp (f(cn+\triangle))(\exp f'(cn+t_{\triangle} )-1)
    \\& \;\;= O(\frac{m}{n})\sum_{\triangle=-m}^{m}\exp (f(cn+\triangle)).
\end{align*}
For $p=2$, the phase transition location is quite different. We consider \[
    g(c)=2c\log( H)+2c -\frac{4c+1}{2}\log(4c+1)
\] and 
\[
    g'(c)= 2\log(H)-2\log (4c+1) 
\]
We have two cases:
\begin{itemize}
    \item For $H<1$,The function $g$ is decreasing and
    \begin{align*}
    \E(\exp( H \sigma_1\sigma_2 ))&=\sum_{\ell=0}^{\infty} \frac{1}{(2\ell)!} \frac{\E\{X^{2\ell}\}^2\E\{|X|^{n-1}\}}{\E\{|X|^{4\ell+n-1}\}}(Hn)^{2\ell}\\&= \sum_{\ell=0}^{\infty}4^{-\ell}\binom{2\ell}{\ell} H^{2\ell}(1+O(n^{-\epsilon}))=(1-H^2)^{-1/2}(1+O(n^{-\epsilon})).
\end{align*}
    \item For $H>1$, $g$ has its maximum at $c= \frac{H-1}{4}$.
\end{itemize}
The maximum value is 
\[
g(c)= 2c-\frac{1}{2}\log(1+2pc)=\frac{ H-1}{2}-\frac{1}{2}\log(H).
\]
We have similar approximation using Taylor expansion:
\begin{align*}
    &f(cn+\triangle)=f(cn)+O(\frac{\triangle}{n})+g''(c)\frac{\triangle^2}{n}.
\end{align*}
For $m\gg \sqrt{n}$, we have
\[
    \sum_{\triangle=-m}^m \exp (f(cn+\triangle))\approx \sqrt{\frac{2\pi n }{g''(c)}}\exp(f(cn))
\]
and other reamining part is negligible.
Moreover, we have
\[
    \sum_{\triangle=-m}^m [\exp((f(cn+\triangle+\frac{1}{2}))-\exp((f(cn+\triangle))]= O(\frac{m}{n})\sum_{\triangle=-m}^m \exp(f(cn+\triangle)).
\]
Furthermore, in any case of $p$, for the maximum point $cn$, we have
\[
    g(c)= 2c- \frac{1}{2}\log(1+2pc).
\]
\end{proof}
\begin{proof}[Proof of Lemma \ref{convexity1}]
Define
\[
f(z) \;=\; p\log \Gamma\!\Bigl(\tfrac{z+1}{2}\Bigr)\;-\;\log \Gamma(z+1).
\]
Recall that
\[
\frac{d}{dx}\,\log \Gamma(x) \;=\; \psi(x)\quad(\text{the digamma function}).
\]
Then
\[
f'(z) \;=\; \frac{p}{2}\,\psi\!\Bigl(\tfrac{z+1}{2}\Bigr)\;-\;\psi(z+1),
\]
and
\[
f''(z) \;=\; \frac{p}{4}\,\psi'\!\Bigl(\tfrac{z+1}{2}\Bigr)\;-\;\psi'(z+1).
\]
We wish to show that 
\[
f''(z) \;>\; 0.
\]

Using the duplication formula
\[
\Gamma(2z) \;=\; 2^{2z-1}\,\sqrt{\pi}\,\Gamma(z)\,\Gamma\!\Bigl(z+\tfrac{1}{2}\Bigr),
\]
one obtains
\[
4\,\psi(2z) \;=\; \psi(z)\;+\;\psi\!\Bigl(z+\tfrac{1}{2}\Bigr).
\]
Taking derivatives yields
\[
\psi'\!\Bigl(\tfrac{z+1}{2}\Bigr)\;+\;\psi'\!\Bigl(\tfrac{z}{2}+1\Bigr)
\;=\;4\,\psi'(z+1).
\]
Hence
\[
\frac{1}{2}\,\psi'\!\Bigl(\tfrac{z+1}{2}\Bigr)\;-\;\psi'(z+1)
\;=\;\frac{1}{4}\,\Bigl(\,\psi'\!\Bigl(\tfrac{z+1}{2}\Bigr)\;-\;\psi'\!\Bigl(\tfrac{z}{2}+1\Bigr)\Bigr)
\;>\;0,
\]
since $\psi'$ is strictly decreasing.  Therefore,
\[
g''(z)=f''(z) \;>\; 0.
\]
\end{proof}
\begin{proof}[Proof of Lemma \ref{decom_1}]
For standard random variable X, the left side is
    \[
        \E[\sigma_{I_1}^{a_1}...\sigma_{I_t}^{a_t}]=\frac{\E[X^{a_1}]^{|I_1|}\cdots \E[X_t^{a_t}]^{|I_t|}\E[|X|^{n-1}]}{\E[|X|^{n-1+\sum a_i|I_i|}]},
    \]
    and the right side is
    \[
        \prod_{i=1}^{t}\E[\sigma_{I_i}^{a_i}]= \prod_{i=1}^{t}\frac{\E[X^{a_i}]^{|I_i|}|X|^{n-1}}{\E[|X|^{n-1+a_i|I_i|}]}.
    \]
Since
\[
    \frac{\E[|X|^{n-1}]}{\E[|X|^{n-1+\sum a_i|I_i|}]}\leq \prod_{i=1}^t \frac{\E[|X|^{n-1}]}{\E[|X|^{n-1+a_i|I_i|}]},
\]
the right side is greater than equal to the left side. 
\end{proof}
\begin{proof}[Proof of Lemma \ref{decompose}]
Applying above lemma and Taylor expansion, we obtain
    \begin{align*}
        \E[\exp(\sum H_I\sigma_I)]&=\E[\prod\exp(H_I\sigma_I))]=\sum_{k_I\in Z_{\geq 0}}\E[\prod \frac{(H_I\sigma_I)^{k_I}}{k_I!}]\\&\leq\sum_{k_I\in Z_{\geq 0}}\prod \frac{\E[(H_I\sigma_I)^{k_I}]}{k_I!}=\prod \E(\exp(H_I\sigma_I)).
    \end{align*}
\end{proof}
\section{Refined fluctuation in the strongly heavy-tailed regime \texorpdfstring{$\alpha<1$}{alpha<1}}\label{app:prop}
We prove the last special case of Theorem \ref{Free} for $\alpha<1$.
Here we recall the definition $\beta(p)$ which is the smallest $p$ such that $\alpha(p)\neq 0$. 
\begin{prop}\label{prop:under_free}
    For under the threshold regime, if $\alpha<1$ and $\beta(p)\geq 3$ holds, the free energy shows  \[
            n^{p-2}\log Z_n|\mathcal{F}_1= \frac{1}{2}\beta^2\alpha(\beta(p))^2\sum_{|I|=\beta(p)} H_{I,p}^2+O(n^{-\epsilon}).
        \]
\end{prop}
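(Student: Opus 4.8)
The plan is to revisit the Hölder reduction used in the proof of Theorem~\ref{Free} and run it with enough precision to retain the $n^{2-\beta(p)}$ term rather than only the leading $O(1)$. Throughout, condition on $\mathcal F_1$ together with the high-probability events of Section~4.1, so that every coupling lies below $H^*_q-n^{-\epsilon}$ and the largest $n^{\epsilon_0}$ monomials of each degree are pairwise disjoint and form the NIM model $H_n^A$. Write $q=\beta(p)\ge 3$ for the smallest active degree. Since for $\alpha<1$ the variable $H_{I,q}^2$ has tail exponent $\alpha/2<1/2$, the sum $\sum_{|I|=q}H_{I,q}^2$ is of the same order as its maximal term, hence $\Theta(1)$, so the claim is equivalent to
\[
Z_n \;=\; 1+\tfrac12\,n^{2-q}\,\beta^2\alpha(q)^2\!\!\sum_{|I|=q}H_{I,q}^2\;+\;O\!\bigl(n^{2-q-\epsilon}\bigr).
\]

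For the NIM block, Proposition~\ref{under_free} applied with effective coefficients $\beta\alpha(p)H_{I,p}$ gives $\log Z_n^A=\tfrac12 n^{2-q}(1+O(n^{-\epsilon}))\,\beta^2\alpha(q)^2\sum_{\mathrm{NIM},\,q}H_{I,q}^2$, the degrees $q'>q$ contributing only $O(n^{2-q'})=o(n^{2-q-\epsilon})$. Passing from the NIM sum (largest $n^{\epsilon_0}$) to the full sum is cheap when $\alpha<1$: Lemma~\ref{heavy_upper} gives $\sum_{i>n^{\epsilon_0}}H_{i,q}^2\le n^{2\epsilon_0}\sum_{i>n^{\epsilon_0}}i^{-2/\alpha}=O(n^{\epsilon_0(1-2/\alpha)})=O(n^{-\epsilon'})$ because $2/\alpha>1$. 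Hence $\tfrac12 n^{2-q}\beta^2\alpha(q)^2\sum_{\mathrm{NIM}}H_{I,q}^2$ equals the target expression up to $O(n^{2-q-\epsilon'})$.

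It remains to upgrade Propositions~\ref{upper_C}, \ref{upper_D} and~\ref{upper_E} from ``$\E[\exp(sH_n^X)]=1+O(n^{-\epsilon'})$'' to ``$\E[\exp(sH_n^X)]=1+O(n^{2-q-\epsilon'})$'', uniformly for $|s|\le n^{\epsilon'''}$; the block $H_n^B$ is absent because $\alpha(2)=0$. Both refinements lean on $\alpha<1$ through summability of the tail series $\sum_i i^{-2/\alpha}$ and $\sum_l l^{-1/\alpha}$, which hold precisely when $1/\alpha>1$. For $H_n^C$: in the bound of Proposition~\ref{upper_C} the quantities $\sum_{i>n^{1-\epsilon}}H_{i,p}^2$ and $a\bigl(\sum_{l\le b}H_{al,p}\bigr)^2$ are, by Lemma~\ref{heavy_upper} and the convergence of $\sum_l l^{-1/\alpha}$, at most $n^{-1-\epsilon''}$ rather than merely $n^{1-\epsilon_1}$, so the excess over $1$ is $O(s^2 n^{1-p-\epsilon''})=o(n^{2-p})$. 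For $H_n^D$: re-run the sphere–concentration step of Proposition~\ref{upper_D} tracking orders --- the conditioned drift is negligible, $\E[H_p']\le b_{n,p}^{-1}O(n^{p(1-\alpha)})\le n^{-p}$ for $\alpha<1$, while the conditioned second moment, estimated against the truncation level $H_{n^{\alpha+\epsilon},p}\,b_{n,p}$ together with $\E[|H|^2\mathbf 1_{|H|\le T}]\asymp T^{2-\alpha}$ ($\alpha<2$), makes $s^2\,\E[H_p'^2]\,n=O(n^{\alpha-1-p+\delta})=o(n^{2-p})$; feeding both into the Lipschitz concentration bound for $\sigma\mapsto\sum_i\sigma_i$ on the sphere gives $\E_\sigma\E_H[\exp(sH_p^D)]=1+O(n^{2-p-\epsilon'})$ uniformly in $3\le p\le M\log n$, hence the same for $H_n^D$. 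For $H_n^E$, Proposition~\ref{upper_E} with $M$ large already gives $O(n^{-M'})\le O(n^{2-q-\epsilon'})$.

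Finally, feed these refined block bounds into the two-sided Hölder comparison of the proof of Theorem~\ref{Free} with $S_i=1+O(n^{-\epsilon_0})$: the non-$A$ blocks contribute $\prod_{X\ne A}\E[\exp(S_X^{-1}H_n^X)]^{S_X}=1+O(n^{2-q-\epsilon'})$, and likewise with the exponents negated, so $\E[\exp(H_n(\sigma))]=\bigl(\E[\exp(H_n^A)]\bigr)^{1+O(n^{-\epsilon_0})}\,\bigl(1+O(n^{2-q-\epsilon'})\bigr)$. Since $\E[\exp(H_n^A)]=1+O(n^{2-q})$, the perturbed exponent and the squared correction alter it only by $O(n^{2-q-\epsilon_0})+O(n^{2(2-q)})$, both $o(n^{2-q-\epsilon})$ for $q\ge 3$; taking logarithms, combining with the NIM computation, and multiplying by $n^{q-2}$ gives the stated identity. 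The delicate step is the sharpened Proposition~\ref{upper_D}: one must carry the sphere–concentration estimate and the truncated-moment bound for $\E[H_p'^2]$ precisely enough to land below the $n^{2-q}$ scale \emph{uniformly} in $p\le M\log n$, and this is exactly where $\alpha<1$ is essential.
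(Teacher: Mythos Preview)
Your route is valid but differs from the paper's. The paper does not sharpen Propositions~\ref{upper_C} and~\ref{upper_D} separately; instead it collapses all non-NIM monomials of each degree into a \emph{single} monomial by iterated H\"older with exponents proportional to $|H_{i,p}|$:
\[
\E\Bigl[\exp\Bigl(s\!\!\sum_{i>n^{\epsilon}}\!\!H_{i,p}\sigma_{i,p}\Bigr)\Bigr]
\;\le\;\max_{j}\,\E\Bigl[\exp\Bigl(s\bigl(\textstyle\sum_{i>n^\epsilon}|H_{i,p}|\bigr)\sigma_{j,p}\Bigr)\Bigr]
\;=\;1+s^2n^{2-p}\Bigl(\sum_{i>n^\epsilon}|H_{i,p}|\Bigr)^{\!2}\bigl(1+O(n^{-\epsilon})\bigr),
\]
the last step being the single-monomial Lemma~\ref{exp_basic}. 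The key point is that for $\alpha<1$ the $\ell^1$ sum $\sum_{i>n^\epsilon}|H_{i,p}|\le n^{\epsilon_0}\sum_{i>n^\epsilon}i^{-1/\alpha}=O(n^{\epsilon(1-1/\alpha)+\epsilon_0})$ is $o(1)$, so the effective coefficient stays below threshold and one reads off $\E[\exp(sH_{n,p}^{BCD})]\le1+O(n^{2-p-\epsilon''})$ directly, bypassing the coloring and sphere-concentration machinery entirely; the final H\"older assembly then proceeds exactly as you describe.

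Your approach---tracking sharper constants through the existing Propositions---also works and invokes $\alpha<1$ in the analogous places (convergence of $\sum_l l^{-1/\alpha}$ for $H_n^C$; the $n^{\alpha-1}$ saving in the truncated second moment for $H_n^D$). It is considerably more laborious, and the $H_n^D$ step is the one place your sketch is thin: you must redo the concentration argument carefully enough that both the drift $s\E[H_p']\sum_i\sigma_{i,p}$ and the variance $s^2\E[H_p'^2]\sum_i\sigma_{i,p}^2$ land below $n^{2-p}$ uniformly in $p\le M\log n$, whereas ``feeding both into the Lipschitz concentration bound'' is exactly the computation that needs to be written out. The paper's collapse trick trades all of that for a two-line appeal to the single-monomial lemma.
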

\begin{proof}
We define $H_n^{BCD}=H_n^B+H_n^C+H_n^D$ and $p$-spin part to be $H_{n,p}^{BCD}$ 
    \begin{align*}
        \E[\exp(sH_n^{BCD})]&\leq \prod_{p=2}^{M\log n}\E[\exp(sM\log n H_{n,p}^{BCD})]^{1/M\log n}\\ & \leq \max_p\E[\exp(sM\log n H_{n,p}^{BCD})].
     \end{align*}
    Furthremore, 
    \begin{align*}
        \E[\exp(H+H_I\sigma_I+H_J\sigma_J)]&\leq \E[\exp(H+(H_I+H_J)\sigma_I)]^{H_I/(H_I+H_J)}\\ & \times \E[\exp(H+(H_I+H_J)\sigma_J)]^{H_J/(H_I+H_J)}
        \\ &\leq \max_{K=I\text{ or }J} \E[\exp(H+(H_I+H_J)\sigma_K)].
    \end{align*}
    Applying this inequality and we have 
    \[
    \E[\exp(sH_{n,p}^{BCD})]\leq \E[\exp((s\sum_{i>n^{\epsilon} }H_{i,p})\sigma_1...\sigma_p)]=1+n^{2-p}s^2(\sum_{i>n^{\epsilon}} H_{i,p} )^2.
    \]
    Lemma \ref{heavy_upper} implies that we have 
    $\sum_{n^{\epsilon}\leq i} H_{i,p}\leq n^{\epsilon_0}i^{-1/\alpha}\leq n^{\epsilon(1-1/\alpha)+\epsilon_0}$.
    We can choose $\epsilon_0<\frac{1}{2}(1/\alpha-1)\epsilon$ and $\epsilon_1$ small enough that $s\sum_{i>n^{\epsilon}} H_{i,p}<n^{-\epsilon_2}$ holds for all $s<n^{\epsilon_1}$ and for some $\epsilon_2>0$.
    This proves
     \[
    \E[\exp(sH_{n}^{BCD})]<1+n^{2-p-\epsilon_2}
    \]
    for $s<n^{\epsilon_1}$.
    We apply H\"older inequality to our original Hamiltonian as 
    \[
    \E[\exp (H_n(\sigma))]\leq \E[\exp(T_1H_n^A)]^{1/T_1}\E[\exp(T_2 H_n^{BCD})]^{1/T_2}\E[\exp(T_3 H_n^E)]^{1/T_3}
    \]
    and
    \[
     \E[\exp(\frac{1}{T_1}H_n^A)]\leq\E[\exp (H_n(\sigma))]^{1/T_1}\E[\exp(-\frac{T_2}{T_1} H_n^{BCD})]^{1/T_2}\E[\exp(-\frac{T_3}{T_1} H_n^E)]^{1/T_3}
    \]
    where $T_1= 1/T$, $T_2=n^{\epsilon_3}/T$, $T_3= n^{2\epsilon_3}
    /T$ and $T=1+n^{\epsilon_3}+n^{2\epsilon_3}$.
    If we choose $\epsilon_3<\epsilon_2/3$, this and Proposition \ref{upper_E} imply 
    \[
    \E[\exp(H_n(\sigma))]= \E[\exp H_n^A](1+O(n^{2-p-\epsilon_2})).
    \]
     This inequality and Proposition \ref{NIM_free} prove the refined fluctuation for $\alpha<1$ case. 
\end{proof}
\subsection*{Acknowledgement}
 The author is deeply grateful to Ji Oon Lee for invaluable guidance, unfailing encouragement, and many stimulating discussions. Special thanks go to Gérard Ben Arous, whose patient instruction in spin glass theory and pivotal suggestion to extend the study from the pure p-spin to the mixed p-spin model were essential to this work. The author also thanks Michel Talagrand for insightful comments that greatly improved the exposition. This work was partially supported by National Research Foundation of Korea under grant number NRF-2019R1A5A1028324 and NRF-2023R1A2C1005843.
\providecommand{\bysame}{\leavevmode\hbox to3em{\hrulefill}\thinspace}
\providecommand{\noopsort}[1]{}
\providecommand{\mr}[1]{\href{http://www.ams.org/mathscinet-getitem?mr=#1}{MR~#1}}
\providecommand{\zbl}[1]{\href{http://www.zentralblatt-math.org/zmath/en/search/?q=an:#1}{Zbl~#1}}
\providecommand{\jfm}[1]{\href{http://www.emis.de/cgi-bin/JFM-item?#1}{JFM~#1}}
\providecommand{\arxiv}[1]{\href{http://www.arxiv.org/abs/#1}{arXiv~#1}}
\providecommand{\doi}[1]{\url{https://doi.org/#1}}
\providecommand{\MR}{\relax\ifhmode\unskip\space\fi MR }
\providecommand{\MRhref}[2]{%
  \href{http://www.ams.org/mathscinet-getitem?mr=#1}{#2}
}
\providecommand{\href}[2]{#2}

\end{document}